\theoremstyle{plain}
\newtheorem{theorem}{Theorem}[section]
\newtheorem{lemma}[theorem]{Lemma}
\newtheorem{claim}[theorem]{Claim}
\newtheorem{corollary}[theorem]{Corollary}
\newtheorem{proposition}[theorem]{Proposition}
\newtheorem{question}[theorem]{Question}
\newcommand{\thistheoremname}{}
\newtheorem*{genericthm*}{\thistheoremname}
\newenvironment{namedthm*}[1]
{\renewcommand{\thistheoremname}{#1}%
  \begin{genericthm*}}
  {\end{genericthm*}}
\theoremstyle{definition}
\newtheorem{definition}[theorem]{Definition}
\newtheorem{remark}[theorem]{Remark}
\numberwithin{equation}{section}
\newcommand{\R}{\mathbb{R}}
\newcommand{\Q}{\mathbb{Q}}
\newcommand{\N}{\mathbb{N}}
\newcommand{\Z}{\mathbb{Z}}
\DeclareMathOperator{\hp}{Homeo^+}
\DeclareMathOperator{\homeo}{Homeo}
\DeclareMathOperator{\sign}{sign}
\DeclareMathOperator{\fix}{Fix}
\DeclareMathOperator{\aut}{Aut}
\DeclareMathOperator{\id}{id}
\begin{document}

\title{The structure of random homeomorphisms}
	\author[U. B. Darji]{Udayan B. Darji}
	\address{Department of Mathematics, University of Louisville,
		Louisville, KY 40292, USA\\Ashoka University, Rajiv Gandhi Education City, Kundli, Rai 131029, India} 
	\email{ubdarj01@louisville.edu}
	\urladdr{http://www.math.louisville.edu/\!$\tilde{}$ \!\!darji}
	\author[M. Elekes]{M\'arton Elekes}
	\address{Alfr\'ed R\'enyi Institute of Mathematics, Hungarian Academy of Sciences,
		PO Box 127, 1364 Budapest, Hungary and E\"otv\"os Lor\'and
		University, Institute of Mathematics, P\'azm\'any P\'eter s. 1/c,
		1117 Budapest, Hungary}
	\email{elekes.marton@renyi.mta.hu}
	\urladdr{www.renyi.hu/ \!$\tilde{}$ \!\!emarci}
	
	\author[K. Kalina]{Kende Kalina}
		\address{E\"otv\"os Lor\'and
		University, Institute of Mathematics, P\'azm\'any P\'eter s. 1/c,
		1117 Budapest, Hungary}
       \email{kkalina@cs.elte.hu} 
	\author[V. Kiss]{Viktor Kiss}
	\address{Alfr\'ed R\'enyi Institute of Mathematics, Hungarian Academy of Sciences,
		PO Box 127, 1364 Budapest, Hungary and E\"otv\"os Lor\'and
		University, Institute of Mathematics, P\'azm\'any P\'eter s. 1/c,
		1117 Budapest, Hungary}
	\email{kiss.viktor@renyi.mta.hu}
	
	\author[Z. Vidny\'anszky]{Zolt\'an Vidny\'anszky}

	\address{Kurt Gödel Research Center for Mathematical Logic, Universität Wien, Währinger Strasse 25, 1090 Wien, Austria and Alfr\'ed 	R\'enyi Institute of Mathematics, Hungarian Academy of Sciences,
		PO Box 127, 1364 Budapest}

	\subjclass[2010]{Primary 03E15, 37E30; Secondary 28A05, 54H11, 28A99, 51F25}
	\keywords{Key Words: non-locally compact Polish group, Haar null, Christensen, shy,
		prevalent, typical element, homeomorphism group, unitary group, conjugacy class} 
        	\email{ zoltan.vidnyanszky@univie.ac.at}
     \urladdr{www.logic.univie.ac.at/$\tilde{}$ vidnyanszz77}
	
	\thanks{The second, fourth and fifth authors were partially supported by the
		National Research, Development and Innovation Office
		-- NKFIH, grants no.~113047, no.~104178 and no.~124749. The fifth author was also partially supported by FWF Grant P29999.}

\begin{abstract}
In order to understand the structure of the ``typical'' element of a homeomorphism group, one has to study how large the conjugacy classes of the group are. When typical means generic in the sense of Baire category, this is well understood, see e.g. the works of Glasner and Weiss, and Kechris and Rosendal. Following Dougherty and Mycielski we investigate the measure theoretic dual of this problem, using Christensen's notion of Haar null sets. When typical means random, that is, almost every with respect to this notion of Haar null sets, the behaviour of the homeomorphisms is entirely different from the generic case. 
For $\hp([0,1])$ we describe the non-Haar null conjugacy classes and also show that their union is co-Haar null, for $\hp(\mathbb{S}^1)$ we describe the non-Haar null conjugacy classes, and for $\mathcal{U}(\ell^2)$ we show that, apart from the classes of the multishifts, all conjugacy classes are Haar null. As an application we affirmatively answer the question whether these groups can be written as the union of a meagre and a Haar null set.
\end{abstract}
\maketitle
\tableofcontents
 
\section{Introduction}

The study of generic elements of Polish groups is a flourishing field with a 
large number of applications, see the works of Kechris and Rosendal \cite{KechrisRosendal}, Truss \cite{truss1992generic}, and Glasner and Weiss \cite{glasner2003universal} among others. It is natural to ask 
whether there exist measure theoretic analogues of these results. 
Unfortunately, on non-locally compact groups there is no natural invariant 
$\sigma$-finite measure. However, a generalisation of the ideal of measure zero 
sets can be defined in every Polish group as follows:

\begin{definition}[Christensen, \cite{originalhaarnull}] Let $G$ be a 
Polish group and $B \subset G$ be Borel. We say that $B$ is \textit{Haar null} if 
there exists a
Borel probability measure $\mu$ on $G$ such that for every $g,h \in G$
we have $\mu(gBh)=0$. 
An arbitrary set $S$ is called Haar null if $S \subset B$ for some Borel Haar 
null set $B$.
\end{definition}

It is known that the collection of Haar null sets forms a $\sigma$-ideal in 
every Polish group and it coincides with the ideal of measure zero sets in 
locally compact groups with respect to every left (or equivalently right) Haar measure. 
Using this definition, it makes sense to talk about the properties of random 
elements of a Polish group. A property $P$ of elements of a Polish group $G$ is said 
to \textit{hold almost surely} or \emph{almost every element of G has property 
$P$} if the set
$\{g \in G: g \text{ has property } P\}$ is co-Haar null.

Since we are primarily interested in homeomorphism and automorphism groups, and in such groups conjugate elements can be considered isomorphic, we are only 
interested in the conjugacy invariant properties of the elements of our Polish groups. Hence, in order to describe  the random element, 
one must give a complete description of the size of the conjugacy classes with 
respect to the Haar null ideal. The investigation of this question has been 
started by Dougherty and Mycielski \cite{DM} in the permutation group of a 
countably infinite set, $S_\infty$. If $f \in S_\infty$ and $a$ is an element 
of the underlying set then the set $\{f^{k}(a):k \in \mathbb{Z}\}$ is called 
the \textit{orbit of $a$ (under $f$)}, while the cardinality of this set is 
called \textit{orbit length}. Thus, each $f \in S_\infty$ has a collection of 
orbits (associated to the elements of the underlying set). It is easy to 
show that two elements of $S_\infty$ are conjugate if and only if they have the 
same (possibly infinite) number of orbits for each possible orbit length. 

\begin{theorem}[Dougherty, Mycielski, \cite{DM}] Almost every element of 
$S_\infty$ has 
infinitely many infinite orbits and only finitely many finite ones.
\end{theorem}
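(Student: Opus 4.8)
The plan is to produce a single Borel probability measure on $S_\infty$ witnessing that the set of permutations which do \emph{not} have the asserted property is Haar null.

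\emph{Reduction to a one-sided condition.} Let $A_1\subseteq S_\infty$ be the set of permutations with infinitely many finite orbits and, for $m\in\N$, let $A_2^{(m)}$ be the set with at most $m$ infinite orbits; put $A_2=\bigcup_m A_2^{(m)}$. These sets are Borel and conjugacy invariant, and the ``bad'' set is exactly $A_1\cup A_2$. If $B$ is conjugacy invariant then $gBg^{-1}=B$, so $gBh=(gBg^{-1})(gh)=B(gh)$; hence $B$ is Haar null as soon as some Borel probability measure $\mu$ satisfies $\mu(Bt)=0$ for all $t$ (then $\mu(gBh)=\mu(B(gh))=0$). Writing $\mu$ as the law of a random permutation $X$, the condition $\mu(Bt)=0$ reads $Xt^{-1}\notin B$ almost surely; since $Xt^{-1}$ is conjugate to $t^{-1}X$ and $B$ is conjugacy invariant, it is enough to find $X$ so that
\begin{equation}\label{eq:DMgoal}
\text{for every } s\in S_\infty,\quad sX \text{ a.s.\ has finitely many finite orbits and infinitely many infinite ones.}
\end{equation}
Once \eqref{eq:DMgoal} holds we get $\mu\bigl((A_1\cup A_2)t\bigr)=0$ for all $t$, so $A_1\cup A_2$ is Haar null and the conclusion follows.

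\emph{The measure.} I would take $X$ to be a ``very spread out'' random permutation of $\N$, built from countably many independent choices so that: (a) almost surely $X$ is a bijection all of whose orbits are infinite and there are infinitely many of them; and (b) $X$ is strongly mixing, in the sense that $\sup_b\Pr(X(a)=b)$ — and, more generally, the conditional atoms of $X$ at one point given finitely many of its other values — are dominated by a sequence that decays fast enough that the induced forward dynamics escapes to infinity rapidly. Exhibiting a concrete such $X$ that is genuinely an element of $S_\infty$ (in particular surjective) is delicate: the obvious ``diffuse'' recipes either destroy surjectivity or fail to be diffuse enough, so the construction has to be tuned carefully. This is one of the two technical cores of the argument.

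\emph{Finitely many finite orbits.} Fix $s$, and let $E_a$ be the event that $a$ lies in a finite orbit of $sX$, equivalently that the trajectory $a, sX(a), (sX)^2(a),\dots$ ever returns to $a$. This trajectory is a Markov-type walk whose one-step kernel is the diffuse law of $X$ post-composed with the fixed bijection $s$. Because $s^{-1}$ is a bijection it can only turn ``very large'' into ``not too small'', so for a fixed $s$ together with a rapidly escaping $X$ the walk is transient; a union bound over the exact first return time, fed by the estimates in (b), then shows that $\Pr(E_a)$ is summable in $a$, uniformly in $s$. By the Borel--Cantelli lemma, $sX$ almost surely has only finitely many periodic points, i.e.\ finitely many finite orbits.

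\emph{Infinitely many infinite orbits, and the obstacle.} By the previous step, cofinitely many points almost surely lie in infinite orbits of $sX$; fixing $m$, it remains to rule out that they all lie in at most $m$ of them. Choose a sparse sequence $p_1,p_2,\dots$ along which the escape rates in (b) are summably small. The same transience estimates bound $\Pr(p_i\text{ and }p_j\text{ are in the same }sX\text{-orbit})$ by a small quantity, so the expected number of pairs $i<j\le N$ with $p_i,p_j$ in a common $sX$-orbit is $o(N^2)$; by Markov's inequality and the pigeonhole principle, $p_1,\dots,p_N$ almost surely do not fit into $\le m$ orbits once $N$ is large, whence letting $N\to\infty$ and then $m\to\infty$ yields \eqref{eq:DMgoal}. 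The main obstacle throughout is the coupling of two issues: constructing the spread-out permutation $X$ as an honest element of $S_\infty$, and converting its diffuseness into a transience estimate for $sX$ that is simultaneously quantitative and uniform over all $s\in S_\infty$ — the first union bounds one is tempted to write down produce divergent geometric series, and one must exploit the rapid escape of $X$ together with the negative correlations inherent in a random permutation to get past this.
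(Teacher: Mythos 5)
Your high-level architecture is exactly the right one (and is the one Dougherty and Mycielski use): reduce via conjugacy-invariance to bounding one-sided translates, then build a ``diffuse'' random permutation $X$ and show that for every fixed $s$, $sX$ almost surely has finitely many finite orbits (Borel--Cantelli on return events) and infinitely many infinite ones (a second-moment/pigeonhole argument on a sparse test sequence). The reduction at the start, including the observation that $\mu(gBh)=\mu(B(gh))$ for conjugacy-invariant $B$ and that $Xt^{-1}$ is conjugate to $t^{-1}X$, is correct.

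However, the proposal as written is not a proof, and you say so yourself: the two steps that you flag as ``technical cores'' are precisely where the entire content of the theorem lives, and they are not carried out. First, you never define the measure. The claim that there exists a random $X\in S_\infty$ that is simultaneously an honest bijection and has the quantitative ``rapid escape'' estimates you require in (b) is exactly the nontrivial construction in \cite{DM} (roughly: define $X$ and $X^{-1}$ alternately by a back-and-forth scheme in which, at each step, the new value is chosen uniformly from a rapidly growing block of unused integers, with the block lengths tuned so the escape probabilities are summable). Without exhibiting such an $X$ and proving the escape estimate, the subsequent probabilistic statements have no foundation. Second, even granting the construction, the transience estimate you invoke needs a real argument: the ``trajectory'' $a, sX(a), (sX)^2(a),\dots$ is not a Markov chain in the usual sense, and one must argue by the principle of deferred decisions, tracking which queries to $X$ and $X^{-1}$ are fresh, handling the (positive-probability) event that the back-and-forth forces a value, and showing that the resulting return probabilities are summable in $a$ for each fixed $s$. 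You explicitly acknowledge that naive union bounds diverge and that one must exploit negative correlations; this is an accurate diagnosis, but identifying the obstacle is not the same as overcoming it. So the gap is concrete: the proposal is a correct roadmap, but the construction of $X$ and the verification of the summable-return estimate --- the two lemmas on which everything else rests --- are missing.

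A smaller point: in the ``infinitely many infinite orbits'' step, the passage from ``expected number of same-orbit pairs among $p_1,\dots,p_N$ is $o(N^2)$'' to the almost-sure conclusion should be spelled out (the events ``$p_1,\dots,p_N$ fit in $\le m$ orbits'' are decreasing in $N$, so their probabilities tending to $0$ does give measure zero for the intersection); as written it reads as if Markov's inequality alone yields an a.s.\ statement.
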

Therefore, almost all permutations belong to the union of a countable set of 
conjugacy classes.
\begin{theorem}[Dougherty, Mycielski, \cite{DM}] 
\label{t:DMold} All of these countably many conjugacy classes are non-Haar null.
\end{theorem}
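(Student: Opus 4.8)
The first point is that conjugacy classes are, for translation purposes, ``central'': since $C$ is closed under conjugation we have $h^{-1}Ch=C$, hence $Ch=hC$ and $gCh=(gh)C$ for all $g,h\in S_\infty$. Thus the family $\{gCh:g,h\in S_\infty\}$ is just the family of left translates $\{g'C:g'\in S_\infty\}$, and --- noting that $C$, being cut out by countably many conditions on orbit lengths, is Borel --- $C$ is Haar null if and only if some Borel probability measure $\mu$ satisfies $\mu(gC)=0$ for all $g$. So I would aim to prove: for every Borel probability measure $\mu$ on $S_\infty$ there is $g$ with $\mu(gC)>0$.

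Fix such a $\mu$. By inner regularity choose a compact $K\subseteq S_\infty$ with $\mu(K)>1/2$; compactness provides an increasing $\phi\colon\N\to\N$ with $f(i),f^{-1}(i)<\phi(i)$ for all $f\in K$ and all $i$. It then suffices to produce a Borel probability measure $\lambda$, possibly depending on $\phi$, such that $\lambda(fC)>0$ for every $f\in K$: since $C^{-1}=C$ we have $\{g:g^{-1}f\in C\}=fC$, so Fubini yields
\[
\int \mu(gC)\,d\lambda(g)=\int \lambda(fC)\,d\mu(f)\ge\int_K \lambda(fC)\,d\mu(f)>0,
\]
and hence $\mu(gC)>0$ for some $g$.

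To build $\lambda$, write $C=C_M$ with $M$ the finite multiset of finite-orbit lengths, and fix a permutation $\sigma_M$ of cycle type $M$ supported on an interval $[0,r)$. I would let $\mathbf x\sim\lambda$ be a random permutation with two features: (a) a fully random ``bottom block'' long enough that for every $f\in K$, with positive probability $f\mathbf x$ keeps some interval $I\supseteq[0,r)$ invariant and acts on $I$ as $\sigma_M$; and (b) on the complement of $I$ --- which is order-isomorphic to $\N$, with $f$ restricting to a bijection still dominated by a shift of $\phi$ --- a permutation assembled from infinitely many interleaved, independent, uniformly random ``$\Z$-shift-like'' subsystems supported on rapidly growing blocks $J_1,J_2,\dots$ with $n_{k+1}\gg\phi(n_k)$. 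Since (a) has positive probability and, conditioned on it, (b) is precisely the statement that the restriction of $f\mathbf x$ to the complement of $I$ lies in $C_\emptyset$, everything reduces to the case $M=\emptyset$: for every $f$ dominated by $\phi$, with probability bounded away from $0$ the permutation $f\mathbf x$ has no finite orbit and infinitely many infinite orbits. The role of aligning $(n_k)$ with $\phi$ is that $f$ then carries the first $k$ blocks into the first $k+1$, so the orbit of any point under $f\mathbf x$ is driven into a previously unused block before it can return, while the interleaved subsystems supply infinitely many distinct orbits escaping to infinity.

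The crux --- and essentially the only real work --- is this escape estimate in (b): one must choose the growth of $(n_k)$ aggressively enough relative to $\phi$ that, level by level and uniformly over all $f$ dominated by $\phi$, the conditional probability that the $f\mathbf x$-orbit of a given point leaves the current block without having closed up is bounded below, so that a Borel--Cantelli / martingale argument keeps these escape probabilities from multiplying to $0$ and simultaneously prevents $f$ from fusing the separate subsystems into finitely many orbits. The reductions in the first two paragraphs are routine; the combinatorial content lies entirely in the construction of $\lambda$ and this estimate, together with the (mild) bookkeeping needed to confirm that the conditional measure occurring in the reduction from general $M$ is again of a form to which the estimate applies.
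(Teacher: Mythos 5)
The paper does not prove this statement; it is quoted from Dougherty--Mycielski \cite{DM}, so there is no internal proof to compare against. Judged on its own terms, your outline is the right one and is essentially the Dougherty--Mycielski strategy: use conjugacy invariance to reduce two-sided translates to one-sided ones (this is Lemma \ref{l:haar null conjugacy}), reduce to a compact $K\subset S_\infty$ with a pointwise bound $\phi$ on $f$ and $f^{-1}$, and then build a random $\mathbf{x}$ out of blocks growing so fast relative to $\phi$ that $f\mathbf{x}$ cannot close a cycle before escaping to the next block. The Fubini reduction from ``$\lambda(fC)>0$ for all $f\in K$'' to ``$\mu(gC)>0$ for some $g$'' is valid, using $C^{-1}=C$ and that $\int_K\lambda(fC)\,d\mu(f)>0$ once $\lambda(fC)>0$ pointwise on a set of positive $\mu$-measure.

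Two small corrections, plus the acknowledged gap. First, $\lambda(fC)=\lambda\bigl(\{x:f^{-1}x\in C\}\bigr)$, so what must be shown is that $f^{-1}\mathbf{x}\in C$ with positive probability, not $f\mathbf{x}\in C$; this is harmless since you chose $\phi$ to bound $f^{-1}$ as well (equivalently, replace $K$ by $K\cup K^{-1}$), but it should be said. Second, the statement of (a) is off as written: if $I\supsetneq[0,r)$ and $f\mathbf{x}$ ``acts on $I$ as $\sigma_M$'' then $f\mathbf{x}$ acquires $|I|-r$ extra fixed points and lands in the wrong conjugacy class; what you want is that the union of the finite orbits of $f\mathbf{x}$ is a set of exactly $r$ points carrying cycle type $M$, with no finite orbits elsewhere. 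Finally, as you say yourself, all of the mathematical content is the escape estimate for the $M=\emptyset$ case (no finite orbits, infinitely many infinite ones, uniformly over $f$ dominated by $\phi$), together with checking that the conditioning performed in step (a) leaves a measure to which that estimate still applies; these are only indicated here. That is precisely the work done in \cite{DM}, so the proposal is a correct sketch of the known proof rather than a complete argument.
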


Thus, the above theorems give a complete description of the non-Haar null 
conjugacy classes and the (conjugacy invariant) properties of a random element. 
In a previous paper \cite{auto}, we studied the conjugacy classes of 
automorphism groups of countable structures. We generalised the construction of 
Dougherty and Mycielski to show that if a closed subgroup $G$ of $S_\infty$ has 
the \emph{finite algebraic closure property} (that is, for every finite $S 
\subset \N$, the set $\{b \in \N : |G_{(S)}(b)| < \infty\}$ is finite, where 
$G_{(S)}(b)$ is the orbit of $b \in \N$ under the stabilizer $G_{(S)}$ of 
$S$) then almost all elements of $G$ have infinitely many infinite and finitely 
many finite orbits. We gave descriptions of the non-Haar null conjugacy classes 
in $\aut(\Q, <)$, the group of order-preserving automorphisms of the rationals and 
in $\aut(\mathcal{R})$, the automorphism group of the random graph. A surprising feature of these descriptions is that in both cases there are continuum many non-Haar null conjugacy classes, hence in what follows the results stating that in certain groups there are only countably many non-Haar null conjugacy classes is non-trivial.

After studying these automorphism groups, the next natural step is to consider homeomorphism groups. Indeed, the division line between these two types of groups is not at all strict, since e.g. it is well known that the automorphism group of the countable atomless Boolean algebra is actually isomorphic to the homeomorphism group of the Cantor set. Our second motivation is that the structure of the generic elements of homeomorphism groups already has a huge literature (see e.g. the monograph by Akin, Hurley and Kennedy \cite{AHK}, or the survey of Glasner and Weiss \cite{GW}) partly because these homeomorphism groups are the central objects in topological dynamics. It is also worth noting that the existence of a generic conjugacy class in the homeomorphism group of the Cantor set was among the main motivations for the seminal paper of Kechris and Rosendal. Therefore it is natural to look at the measure theoretic dual of these questions. The aim of the current paper is to initiate this project by characterising the non-Haar null conjugacy classes in certain important homeomorphism groups.

Let us also mention that there have already been several alternative attempts at investigating random homeomorphisms (with respect to certain natural probability measures), see e.g. Graf, Mauldin and Williams \cite{graf}, and Downarowicz, Mauldin and Warnock \cite{MR1182657}. Our results also imply two theorems of Shi \cite{Shi} concerning the set of fixed-points of the random homeomorphism of $[0, 1]$. 

We now describe our main results. 

Let $\hp([0, 1])$ denote the space of increasing homeomorphisms of the unit 
interval $[0, 1]$ with the topology of uniform convergence. For a homeomorphism 
$f \in \hp([0, 1])$ we let $\fix(f) = \{x \in [0, 1] : f(x) = x\}$. Our theorem 
concerning $\hp([0, 1])$ is the following, which we prove in Section 
\ref{s:homeo [0, 1]}.

\begin{namedthm*}{Theorem \ref{t:homeo [0,1] conjugacy classes}}
  The conjugacy class of $f \in \hp([0, 1])$ is non-Haar null if and 
  only if $ \fix(f)$ has no limit point in the open interval 
  $(0, 1)$, and if $x_0 \in \fix(f) \cap (0, 1)$ then $f(x) - x$ does not have 
  a local extremum point at $x_0$. Moreover, the union of the non-Haar null 
  conjugacy classes is co-Haar null.
\end{namedthm*}
\begin{corollary}
  There are only countably many non-Haar null conjugacy classes in $\hp([0, 
  1])$. 
\end{corollary}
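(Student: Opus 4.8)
The plan is to read off the non-Haar null conjugacy classes from Theorem~\ref{t:homeo [0,1] conjugacy classes} together with the standard description of conjugacy in $\hp([0,1])$. Recall that $f,g\in\hp([0,1])$ are conjugate if and only if there is an increasing homeomorphism $h$ of $[0,1]$ with $h(\fix(f))=\fix(g)$ and $\sign\big(f(x)-x\big)=\sign\big(g(h(x))-h(x)\big)$ for all $x\in[0,1]$; and since an order isomorphism between two closed subsets of $[0,1]$ is automatically a homeomorphism and extends, interval by interval over the complement, to an increasing homeomorphism of $[0,1]$, such an $h$ exists exactly when $(\fix(f),\le)$ and $(\fix(g),\le)$ are order isomorphic by a map fixing $0$ and $1$ and identifying, for each complementary interval $I$ of $\fix(f)$, the sign of $f-\id$ on $I$ with the sign of $g-\id$ on the corresponding complementary interval of $\fix(g)$. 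Thus a conjugacy class of $\hp([0,1])$ is completely determined by the order type of $\fix(f)$ together with the induced $\{+,-\}$-valued function on the family of complementary intervals, up to this notion of isomorphism.

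First I would enumerate the order types of $\fix(f)$ that can occur for a non-Haar null class. By Theorem~\ref{t:homeo [0,1] conjugacy classes} such an $f$ has $\fix(f)$ without limit points in $(0,1)$, so every point of $\fix(f)\cap(0,1)$ is isolated in $\fix(f)$; hence $\fix(f)\cap(0,1)$ is a countable subset of $(0,1)$ whose only possible accumulation points are $0$ and $1$, and its order type is one of: empty; finite of some size $n\in\N$; $\omega$ (accumulating at $1$); $\omega^*$ (accumulating at $0$); or $\omega^*+\omega$ (accumulating at both). There are only countably many of these.

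Next I would bound, for each such order type, the number of admissible sign functions. If $x_0\in\fix(f)\cap(0,1)$ and $\epsilon>0$ is chosen with $(x_0-\epsilon,x_0+\epsilon)\cap\fix(f)=\{x_0\}$, then $f(x)-x$ has a constant nonzero sign on each of $(x_0-\epsilon,x_0)$ and $(x_0,x_0+\epsilon)$, and it has a local extremum at $x_0$ precisely when these two signs coincide. Hence the second clause of Theorem~\ref{t:homeo [0,1] conjugacy classes} is equivalent to saying that the sign function takes opposite values on any two complementary intervals of $\fix(f)$ sharing an endpoint, i.e.\ that it alternates along the linearly ordered family of complementary intervals. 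For each admissible order type that family is order isomorphic to a single interval, to a finite chain, to $\omega$, to $\omega^*$, or to a $\Z$-chain whose ``middle'' interval is order-theoretically definable and hence preserved by any isomorphism of the fixed point set; in every case there are at most two alternating sign functions up to isomorphism. Combined with the previous paragraph, this shows that at most countably many conjugacy classes of $\hp([0,1])$ are non-Haar null, as claimed.

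I expect the only mildly delicate points to be the translation of the ``no local extremum'' clause into the alternation condition and the accompanying bookkeeping --- in particular the canonicity of the central interval in the doubly accumulating case; the underlying conjugacy classification of $\hp([0,1])$ is classical and in any case emerges from the analysis carried out in Section~\ref{s:homeo [0, 1]}.
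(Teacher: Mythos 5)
Your argument is essentially the paper's: within the proof of Theorem~\ref{t:homeo [0,1] conjugacy classes} the authors decompose the non-Haar null classes into the four cases (i)--(iv) according to whether $\fix(f)\cap(0,1)$ accumulates at both endpoints, at one, or is finite of size $n$, and then observe that the alternation of signs forced by the ``no local extremum'' clause leaves at most two choices in each case, yielding countably many classes. Your enumeration of order types and the translation of the extremum condition into alternation reproduce exactly this.

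One claim in your write-up is actually false, though it does not affect the conclusion: in the doubly accumulating case the family of complementary intervals is a $\Z$-chain, and a $\Z$-chain has \emph{no} order-theoretically definable ``middle'' interval, since $\Z$ is homogeneous under order automorphisms. The paper handles this case correctly by noting that it yields just \emph{one} conjugacy class, precisely because shifting the $\Z$-indexing by one swaps the two alternating sign patterns. Fortunately you don't need any canonical base point: on any linearly ordered family there are at most two alternating $\{+,-\}$-valued functions outright (fix the sign anywhere and the rest is forced), so the bound ``at most two classes per order type'' holds trivially and the countability conclusion stands. If you rewrite the doubly accumulating case with that simpler observation in place of the ``definable middle'' argument, the proof is clean and matches the paper.
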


As the definition of Haar null sets suggests, to obtain the "if" part of Theorem \ref{t:homeo [0,1] conjugacy classes} one has to construct a probability measure with certain properties. Our measure is somewhat similar to some of the measures constructed by Graf, Mauldin and Williams \cite{graf}. However, a substantial difference between the two approaches is that proving that a property $P$ holds almost surely (in the sense defined in the Introduction) requires to prove not only that the set $\{f\in \homeo^+([0,1]): f \text{ has property $P$}\}$  has measure one (with respect to a certain probability measure) but that all of its translates have this property as well. This typically requires much more work.

As an application of the above characterisation, we answer the following 
question of the first author for the group $\hp([0, 1])$. 

\begin{question}
  \label{q:decomposition}
  Suppose that $G$ is an uncountable Polish group. Can it be written as the 
  union of a meagre and a Haar null set?
\end{question}

Cohen and Kallman \cite{CK} investigated the above question and developed a 
technique to find Haar null-meagre decompositions. However, their method does 
not work for the group $\hp([0, 1])$ as they have also noted. With the help of 
Theorem \ref{t:homeo [0,1] conjugacy classes}, we have the following corollary. 

\begin{namedthm*}{Corollary \ref{c:homeo [0,1] is Haar null cup meagre}}
  The group $\hp([0, 1])$ can be partitioned into a Haar null and a meagre set.
\end{namedthm*}

\begin{remark} In \cite{auto} we showed that the automorphism group of the countable atomless Boolean algebra can be partitioned into a Haar null and a meagre set, but, as mentioned before, this group is isomorphic to the group of homeomorphisms of the Cantor set, hence we have another important homeomorphism group that can be partitioned into a Haar null and a meagre set.
\end{remark}

We now state two results of Shi \cite{Shi}. Both follow easily from our characterisation of the non-Haar null conjugacy classes of $\hp([0, 1])$, Theorem \ref{t:homeo [0,1] conjugacy classes}.  
\begin{corollary}
  For any function $q \in \hp([0, 1])$, the set of functions in $\hp([0, 1])$ which cross the curve $y = q(x)$ in $(0, 1)$ infinitely many times is neither Haar null, nor co-Haar null.
\end{corollary}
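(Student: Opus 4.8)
The plan is to reduce everything to the case $q=\id$ and then read off the conclusion from Theorem~\ref{t:homeo [0,1] conjugacy classes}. For the reduction, note that since $q^{-1}$ is increasing we have $\sign\bigl(f(x)-q(x)\bigr)=\sign\bigl(q^{-1}(f(x))-x\bigr)$ for every $x\in[0,1]$; hence $f$ crosses the graph of $q$ exactly at the points where $q^{-1}\circ f$ crosses the diagonal $y=x$, and in particular $f$ does so infinitely often in $(0,1)$ if and only if $q^{-1}\circ f$ does. Writing $B=\{g\in\hp([0,1]):g(x)-x\text{ changes sign infinitely often in }(0,1)\}$, the set in the statement is therefore the left translate $q\circ B$, and its complement is $q\circ(\hp([0,1])\setminus B)$. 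Since the $\sigma$-ideal of Haar null sets is invariant under left translations, it suffices to show that $B$ is neither Haar null nor co-Haar null, and for each of these I would exhibit a non-Haar null conjugacy class — one contained in $B$, one contained in its complement — using the ``if'' direction of Theorem~\ref{t:homeo [0,1] conjugacy classes}.

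To see that $B$ is not Haar null, I would fix a sequence $0=a_0<a_1<a_2<\cdots\nearrow 1$ and construct $g\in\hp([0,1])$ with $\fix(g)=\{a_n:n\ge 0\}\cup\{1\}$ whose sign pattern alternates, say $g-\id>0$ on $(a_{2k},a_{2k+1})$ and $g-\id<0$ on $(a_{2k-1},a_{2k})$; such a $g$ is easily assembled from rescaled copies of fixed model maps like $t\mapsto\sqrt t$ and $t\mapsto t^2$ on the successive intervals, and is automatically continuous at $1$ since $|g(x)-x|<a_{n+1}-a_n\to 0$ there. Then $\fix(g)$ has $1$ as its only limit point, so it has no limit point in $(0,1)$, and at each $a_n\in(0,1)$ the continuous function $g-\id$ changes sign and hence has no local extremum; so by Theorem~\ref{t:homeo [0,1] conjugacy classes} the conjugacy class of $g$ is non-Haar null. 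Finally, for $h=\phi g\phi^{-1}$ one has $h(\phi(x))-\phi(x)=\phi(g(x))-\phi(x)$, which has the same sign as $g(x)-x$; as $\phi$ is an increasing self-homeomorphism of $[0,1]$ carrying $(0,1)$ onto $(0,1)$, it follows that $h-\id$ changes sign at each of the infinitely many points $\phi(a_n)\in(0,1)$, so the whole conjugacy class of $g$ lies in $B$ and $B$ is not Haar null. For the complement I would take any $g_0\in\hp([0,1])$ with $g_0>\id$ on $(0,1)$, so $\fix(g_0)=\{0,1\}$: its conjugacy class is non-Haar null by Theorem~\ref{t:homeo [0,1] conjugacy classes} (the extremum condition being vacuous), and every conjugate of $g_0$ is again $>\id$ on $(0,1)$, hence has no crossing there and lies outside $B$; thus $\hp([0,1])\setminus B$ contains a non-Haar null set, i.e.\ $B$ is not co-Haar null.

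The one genuinely delicate point — the step I would be most careful about — is the construction of the witness $g$ in the first half: at first sight it looks paradoxical that a homeomorphism can cross the diagonal infinitely often inside $(0,1)$ and still have a non-Haar null conjugacy class, since then $\fix(g)$ is infinite. The resolution, exploited in the construction, is that the crossings may accumulate only at an endpoint, so that $\fix(g)$ has no limit point in the \emph{open} interval $(0,1)$, and that a sign change at an isolated zero automatically rules out a local extremum — which is exactly what makes $g$ satisfy the hypotheses of Theorem~\ref{t:homeo [0,1] conjugacy classes}. The remaining ingredients (the sign identity behind the reduction, the existence of $g$, and the behaviour of sign changes under conjugation) are routine.
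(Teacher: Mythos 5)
Your proposal is correct and follows the same outline as the paper's proof: reduce to $q=\id$ via left translation and then invoke Theorem~\ref{t:homeo [0,1] conjugacy classes} to exhibit a non-Haar null conjugacy class inside the set (your $g$, a case-(ii)/(iii) type homeomorphism with fixed points accumulating only at an endpoint) and one inside the complement (your $g_0$, a case-(iv) homeomorphism with $\fix(g_0)=\{0,1\}$); the paper merely compresses this verification into ``follows easily from Theorem~\ref{t:homeo [0,1] conjugacy classes}.'' The one small divergence is that the paper implicitly reads ``cross'' as ``intersect'' --- it identifies the set with $q\mathcal{F}$ where $\mathcal{F}=\{f: f(x)=x \text{ infinitely often}\}$ --- whereas you read ``cross'' as ``change sign''; the two sets differ in general, but both the non-Haar-null-ness and the non-co-Haar-null-ness go through under either reading by exactly the conjugacy classes you chose, so the discrepancy is immaterial.
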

\begin{proof}
  Let $\mathcal{F} = \{f \in \hp([0, 1]) : f(x) = x$ infinitely many times$\}$. Then the set in question is $q \mathcal{F}$, hence it is enough to prove that $\mathcal{F}$ is neither Haar null, nor co-Haar null. And the latter statement follows easily from Theorem \ref{t:homeo [0,1] conjugacy classes}. 
\end{proof}

We remark here that in the paper \cite{Shi}, Shi used a slightly modified definition of Haar nullness. Also, instead of proving that the set in the previous corollary is not Haar null, the author proved that it is not \emph{left} or \emph{right Haar null}. We do not define these notions here, only note that since the set is a translate of a conjugacy invariant set, these notions coincide by Lemma \ref{l:haar null conjugacy}. 

The following corollary was also proved by Shi \cite{Shi}.
\begin{corollary}
  For any function $q \in \hp([0, 1])$, the set of functions in $\hp([0, 1])$ that are equal to $q$ on some non-degenerate subinterval of $[0, 1]$ is Haar null.
\end{corollary}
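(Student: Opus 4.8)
The plan is to reduce, via a single translation, to showing that the conjugacy-invariant set
\[
\mathcal{G} = \{g \in \hp([0,1]) : g \text{ is the identity on some non-degenerate subinterval of } [0,1]\}
\]
is Haar null, and then to observe that $\mathcal{G}$ sits inside the union of the Haar null conjugacy classes, which is Haar null by the ``moreover'' clause of Theorem \ref{t:homeo [0,1] conjugacy classes}.

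First I would record the translation: a homeomorphism $f$ agrees with $q$ on an interval $I$ precisely when $q^{-1}f$ restricts to the identity on $I$, so the set in the statement is exactly $q\mathcal{G}$. Since a left translate of a Haar null set is Haar null — if $\mu$ witnesses that $B$ is Haar null, then the pushforward of $\mu$ under $x \mapsto qx$ witnesses that $qB$ is Haar null — it is enough to prove that $\mathcal{G}$ is Haar null.

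Next I would check the two facts about $\mathcal{G}$. It is a union of conjugacy classes: if $g$ is the identity on a non-degenerate interval $I$ and $h \in \hp([0,1])$, then $hgh^{-1}$ is the identity on the non-degenerate interval $h(I)$. And every conjugacy class that meets $\mathcal{G}$ is Haar null: if $g$ is the identity on $[a,b]$ with $a < b$, then $\emptyset \neq (a,b) \subseteq \fix(g) \cap (0,1)$, so $\fix(g)$ has a limit point in $(0,1)$, and hence the conjugacy class of $g$ is Haar null by the characterisation in Theorem \ref{t:homeo [0,1] conjugacy classes}.

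Putting this together, $\mathcal{G}$ is contained in the union of all the Haar null conjugacy classes. Because the conjugacy classes partition $\hp([0,1])$, this union is exactly the complement of the union of the non-Haar null conjugacy classes, which is co-Haar null by the final assertion of Theorem \ref{t:homeo [0,1] conjugacy classes}; thus the union of the Haar null conjugacy classes — and with it $\mathcal{G}$ and the set in the statement — is Haar null. The one substantive ingredient here is the ``moreover'' part of Theorem \ref{t:homeo [0,1] conjugacy classes}, which handles an a priori uncountable union of Haar null classes at once; everything else is routine bookkeeping, so there is no genuine obstacle beyond that theorem. (One could alternatively note that $f = q$ on a non-degenerate interval iff $f = q$ on $[p,r]$ for some rationals $0 \le p < r \le 1$, turning $\mathcal{G}$ into a countable union; but the individual pieces $\{g : g|_{[p,r]} = \id\}$ are not conjugacy-invariant and still appear to need the same measure-theoretic input, so this does not really shorten the argument.)
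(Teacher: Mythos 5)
Your argument is correct and follows essentially the same route as the paper's: translate by $q^{-1}$ to reduce to the conjugacy-invariant set of homeomorphisms fixing a non-degenerate interval, then invoke Theorem~\ref{t:homeo [0,1] conjugacy classes} (its ``moreover'' clause, or equivalently the co-Haar-nullity of $\mathcal{L}$) to conclude that set is Haar null. The paper phrases the last step as a countable union of Haar null sets, but the measure-theoretic input is identical.
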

\begin{proof}
  Notice that the set in question is a translate of $\{f \in \hp([0, 1]) : f(x) = x$ on some non-degenerate interval$\}$, which is a countable union of Haar null sets using Theorem \ref{t:homeo [0,1] conjugacy classes}.
\end{proof}

In Section \ref{s:homeo S^1} we turn to the group $\hp(\mathbb{S}^1)$ of 
order-preserving homeomorphisms of the circle. In what follows, $\tau(f)$ 
denotes the rotation number of a homeomorphism $f \in \hp(\mathbb{S}^1)$. If $f \in 
\hp(\mathbb{S}^1)$ has finitely many periodic points then we call one of its periodic 
points $x_0$ of period $q$ \emph{crossing}, if for a lift $F$ of $f^q$ with 
$x_0 \in \fix(F)$, $x_0$ is not a local extremum point of $F(x) - x$. For the 
formal definition of these notions, see Section \ref{s:homeo S^1}.

\begin{namedthm*}{Theorem \ref{t:haar positive conj classes in homeo s1}}
  The conjugacy class of $f \in \hp(\mathbb{S}^1)$ is non-Haar null if and only if 
  $\tau(f) \in \Q$, $f$ has finitely many periodic points and all of them are 
  crossing. Every such conjugacy class necessarily contains an even number of 
  periodic orbits, and for every rational number $0 \le r < 1$ and 
  positive integer $k$ there is a unique non-Haar null conjugacy class with 
  rotation number $r$ containing $2k$ periodic orbits. Moreover, the union of 
  the non-Haar null conjugacy classes and $\{f \in \hp(\mathbb{S}^1) : \tau(f) \not \in 
  \Q\}$ is co-Haar null. 
\end{namedthm*}

\begin{corollary}
  There are only countably many non-Haar null conjugacy classes in $\hp(\mathbb{S}^1)$. 
\end{corollary}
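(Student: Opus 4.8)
The plan is to derive this immediately from Theorem \ref{t:haar positive conj classes in homeo s1}, so the work is purely bookkeeping. First I would recall what that theorem provides: if the conjugacy class of $f \in \hp(\mathbb{S}^1)$ is non-Haar null, then $\tau(f) \in \Q$, so after replacing $f$ by a conjugate (which does not change the conjugacy class) we may assume $\tau(f) = r$ for a unique $r$ with $0 \le r < 1$; moreover $f$ has only finitely many periodic points, all of them crossing, and the number of periodic orbits of $f$ is an even number $2k$ with $k \ge 1$. Thus every non-Haar null conjugacy class $C$ determines a pair $(r,k) \in (\Q \cap [0,1)) \times \N$ with $r = \tau(f)$ and $2k$ the number of periodic orbits of any $f \in C$; both quantities are conjugacy invariants, so this assignment $C \mapsto (r,k)$ is well-defined.

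Next I would invoke the uniqueness clause of the theorem: for every rational $0 \le r < 1$ and every positive integer $k$ there is exactly one non-Haar null conjugacy class with rotation number $r$ containing $2k$ periodic orbits. In particular the map $C \mapsto (r,k)$ above is injective on the set of non-Haar null conjugacy classes. Since $(\Q \cap [0,1)) \times \N$ is a countable set, the domain of an injection into it is countable, which is the assertion of the corollary.

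There is essentially no obstacle here beyond making sure that the two conjugacy invariants used — the rotation number and the number of periodic orbits — are genuinely invariant and that the "uniqueness" statement of Theorem \ref{t:haar positive conj classes in homeo s1} is being applied to exhaust \emph{all} non-Haar null classes; both are guaranteed by the "only if" direction of that theorem together with its final enumeration. Hence the proof is a one-line consequence: the non-Haar null conjugacy classes inject into $(\Q \cap [0,1)) \times \N$, a countable set.
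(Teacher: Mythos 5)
Your argument is correct and is exactly the intended one: the ``only if'' direction of Theorem \ref{t:haar positive conj classes in homeo s1}, together with the statement that every such class has an even number $2k$ of periodic orbits and the uniqueness clause, gives a well-defined injection from the set of non-Haar null conjugacy classes into the countable set $(\Q \cap [0,1)) \times \N$. The paper leaves this corollary without proof precisely because it is this immediate bookkeeping consequence. (One small wording point: you need not ``replace $f$ by a conjugate'' to normalize the rotation number — $\tau$ is already conjugacy invariant and takes values in $\mathbb{S}^1$, so each rational value has a unique representative in $[0,1)$ — but this does not affect the argument.)
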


We could not settle the question whether the union of the non-Haar null 
conjugacy classes is co-Haar null, since the following question remains open.

\begin{question}
  \label{q:S1 irrational rot number}
  Is the set $\{f \in \hp(\mathbb{S}^1) : \tau(f) \not \in \Q\}$ Haar null?
\end{question}

Again, we could use the characterisation to answer Question \ref{q:decomposition} for $\hp(\mathbb{S}^1)$. 

\begin{namedthm*}{Corollary \ref{c:homeo S^1 is Haar null cup meagre}}
  The group $\hp(\mathbb{S}^1)$ can be partitioned into a Haar null and a meagre set.
\end{namedthm*}

In Section \ref{s:U(l^2)} we introduce basic results of spectral theory to 
prove the following theorem about the group $\mathcal{U}(\ell^2)$ of unitary 
transformations of the separable, infinite dimensional Hilbert space. 

\begin{namedthm*}{Theorem \ref{t:ul2main}}
  Let $U \in \mathcal{U}(\ell^2)$ be given with spectral measure $\mu_U$ and multiplicity 
  function $n_U$. If the conjugacy class of $U$ is non-Haar null then $\mu_U   
  \simeq \lambda$ and $n_U$ is constant $\lambda$-a.~e., where $\lambda$ denotes Lebesgue measure. 
\end{namedthm*}
\begin{namedthm*}{Corollary \ref{c:multishift}}
  In the unitary group every conjugacy class is Haar null possibly except for 
  a countable set of classes, namely the conjugacy classes of the multishifts. 
\end{namedthm*}

For the group $\mathcal{U}(\ell^2)$ the fact that it can be written as the union of a meagre and a Haar null set has already been proved, see \cite{CK}. 

\section{Preliminaries and notation}

Throughout the paper $\N$ denotes the set of non-negative integers. Let $G$ be an arbitrary Polish group. We call a set $H \subset G$ 
\emph{conjugacy invariant} if it is the union of some conjugacy classes of $G$. 
A map $\varphi$ with domain $G$ is called \emph{conjugacy invariant} if 
$\varphi(f) = \varphi(g)$ for conjugate elements $f, g \in G$. For an arbitrary 
function $f$, let $\fix(f) = \{x : f(x) = x\}$ denote the set of fixed 
points of $f$.

We will sometimes use the following well-known fact. 

\begin{lemma}
  \label{l:haar null conjugacy}
  Let $G$ be a Polish group and $B \subset G$ be a Borel set that is conjugacy 
  invariant. If there exists a Borel probability measure $\mu$ on $G$ such that 
  for every $g \in G$, $\mu(g B) = 0$ then $B$ is Haar null. 
\end{lemma}
\begin{proof}
  It is enough to show that $\mu$ is a witness measure for $B$, that is, for 
  every $g, h \in G$, $\mu(gBh) = 0$. But we have $gBh = ghh^{-1}Bh \subset 
  ghB$, using at the last step that $B$ is conjugacy invariant, hence $\mu(gBh) 
  \le \mu(ghB) = 0$. 
\end{proof}

\section{Homeomorphisms of the interval}
\label{s:homeo [0, 1]}

In this section we investigate $\hp([0, 1])$, the group of increasing 
homeomorphisms of the unit interval $[0, 1]$ and prove Theorem \ref{t:homeo 
[0,1] conjugacy classes}. The topology on $\hp([0, 1])$ is that of uniform 
convergence, that is generated by the metric $d(f, g) = \|f - g\| = \sup_{x \in 
[0, 1]} |f(x) - g(x)|$ for $f, g \in \hp([0, 1])$. 

With respect to this metric, we denote the open ball centred at the homeomorphism $f \in \hp([0, 
1])$ having radius $\delta > 0$ by $B(f, \delta)$. 
Note that if $f_n \to f$ uniformly for the homeomorphisms $f_n, f \in \hp([0, 
1])$, $n = 0, 1, \dots$, then $f_n^{-1} \to f^{-1}$ uniformly, hence a 
compatible metric for $\hp([0, 1])$ is $d'(f, g) = \|f - g\| + \|f^{-1} - 
g^{-1}\|$.

Let $\sign : \R \to \{-1, 0, 1\}$ be the sign function, i.e., 
\begin{equation*}
\sign(x) = \left\{ 
\begin{array}{cl}
  -1 & \text{if $x < 0$}, \\
  0 & \text{if $x = 0$}, \\
  1 & \text{if $x > 0$}.
\end{array}
\right. 
\end{equation*}
For the characterisation of non-Haar null conjugacy classes of 
$\hp([0, 1])$, we use the following well-known fact. 
\begin{lemma}
  \label{l:conjugacy in [0,1]}
  Two homeomorphisms $f, g \in \hp([0, 1])$ 
  are conjugate if and only if there exists a homeomorphism 
  $h \in \hp([0, 1])$ such that 
  $\sign(f(x) - x) = \sign(g(h(x)) - h(x))$ for every $x \in [0, 1]$. 
\end{lemma}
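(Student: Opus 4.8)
The plan is to prove both directions by a direct construction, using the fact that the ``sign pattern'' of $f(x)-x$ records exactly the fixed-point set of $f$ together with the direction in which $f$ moves points on each complementary interval. Observe first that a homeomorphism $h\in\hp([0,1])$ conjugates $f$ to $g$, i.e.\ $g = h\circ f\circ h^{-1}$, precisely when $g(h(x)) = h(f(x))$ for all $x$; since $h$ is increasing, this forces $h$ to map $\fix(f)$ bijectively onto $\fix(g)$ and to map each complementary component of $\fix(f)$ onto a component of $\fix(g)$ in an order-preserving way. On such a component, $h$ conjugates the restriction of $f$ to the restriction of $g$, and both restrictions are fixed-point-free increasing self-homeomorphisms of an interval, moving all points in the same direction. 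This is why only the sign matters.

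For the ``only if'' direction I would argue as follows. Suppose $g = h f h^{-1}$ for some $h \in \hp([0,1])$. Then for every $x$, $g(h(x)) - h(x) = h(f(x)) - h(x)$, and since $h$ is strictly increasing, $h(f(x)) - h(x)$ has the same sign as $f(x) - x$. Hence $\sign(f(x)-x) = \sign(g(h(x)) - h(x))$, as required.

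For the ``if'' direction, assume $h \in \hp([0,1])$ satisfies $\sign(f(x)-x) = \sign(g(h(x))-h(x))$ for all $x$; I want to produce a (possibly different) conjugating homeomorphism $h_0$. The hypothesis says $h(\fix(f)) = \fix(g)$ and that on each complementary open interval $(a,b)$ of $\fix(f)$, $f$ and $g\circ h$ push points in the same direction relative to $h$; equivalently, writing $(c,d) = (h(a),h(b))$, the conjugate $h^{-1} g h$ restricted to $(a,b)$ is a fixed-point-free increasing homeomorphism of $(a,b)$ moving points in the same direction as $f|_{(a,b)}$. The core fact to invoke here is the classical statement that any two fixed-point-free increasing self-homeomorphisms of an open interval moving points in the same direction are conjugate by an increasing homeomorphism of that interval (a fundamental-domain / ``flow-box'' argument: pick a point $p$, map the fundamental domain $[p, f(p)]$ onto a fundamental domain $[p', \tilde g(p')]$ by any increasing homeomorphism, and extend equivariantly using the $\Z$-actions generated by $f$ and $\tilde g$; continuity at the endpoints is automatic since the domains are exhausted). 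Applying this on each complementary interval and gluing with $h$ on $\fix(f)$ yields the desired $h_0 \in \hp([0,1])$ with $h_0 f h_0^{-1} = g$; continuity of $h_0$ at the (closed) fixed-point set follows because $h_0$ agrees with the order-preserving bijection $h$ there and is squeezed between the appropriate values on the complementary intervals.

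The main obstacle is the gluing/continuity step: one must check that the piecewise-defined $h_0$, built from a possibly infinite (even uncountable) family of complementary intervals together with $h$ on $\fix(f)$, is genuinely a homeomorphism of $[0,1]$ — in particular continuous and strictly increasing at limit points of $\fix(f)$. This is handled by keeping the construction on each interval ``compatible with $h$'' in the sense that $h_0$ and $h$ have the same values at the endpoints of every complementary interval, so that $h_0$ is squeezed between nearby values of the monotone function $h$ and inherits its continuity; monotonicity is clear interval-by-interval and across the fixed set. Everything else is routine.
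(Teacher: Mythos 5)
Your proof is correct. Note that the paper does not actually supply a proof of this lemma: it simply cites \cite[Lemma 2.3]{F}, which treats the analogous statement for homeomorphisms of $\R$, and says the present lemma follows easily. Your argument is the standard direct proof that such a citation implicitly points to, so there is no real divergence of method to discuss.

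Two minor remarks on completeness. First, the hypothesis in the ``if'' direction gives an $h$ matching sign patterns but not necessarily a conjugator, and you correctly notice that one must build a new $h_0$; a common slip is to try to show the given $h$ itself conjugates. Second, for the gluing/continuity step you appeal to a squeezing argument; this works (for $x_n \to x_0 \in \fix(f)$ with $x_n$ in complementary intervals $(a_n,b_n)$, the near endpoint $a_n$ or $b_n$ converges to $x_0$, and monotonicity of $h_0$ takes care of the other side), but it is perhaps cleaner to observe that the piecewise-defined $h_0$ is a strictly increasing bijection of $[0,1]$ onto itself (surjectivity being clear since $h$ maps $\fix(f)$ onto $\fix(g)$ and each $\psi_{(a,b)}$ is onto $(h(a),h(b))$), hence automatically a homeomorphism. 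Either way the argument closes.
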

A similar result about homeomorphisms of $\R$ is shown in 
\cite[Lemma 2.3]{F}, from which the lemma can be proved easily. 
Now we are ready to prove our theorem about the non-Haar null 
conjugacy classes of $\hp([0,1])$. 

\begin{theorem}\label{t:homeo [0,1] conjugacy classes}
  The conjugacy class of $f \in \hp([0, 1])$ is non-Haar null if and 
  only if $ \fix(f)$ has no limit point in the open interval 
  $(0, 1)$, and if $x_0 \in \fix(f) \cap (0, 1)$ then $f(x) - x$ does not have 
  a local extremum point at $x_0$. Moreover, the union of the non-Haar null 
  conjugacy classes is co-Haar null.
\end{theorem}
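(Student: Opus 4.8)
The plan is to split the argument into three parts: first a conjugacy classification lemma that translates the statement into a condition on the ``fixed point pattern'' of $f$ (which is already provided as Lemma \ref{l:conjugacy in [0,1]}); second, the ``only if'' direction, showing that conjugacy classes with a bad fixed point pattern are Haar null; and third, the ``if'' direction together with the ``moreover'' clause, showing that the union of the good conjugacy classes is co-Haar null, and hence a fortiori each good class is non-Haar null (using Lemma \ref{l:haar null conjugacy} to reduce to a single-sided translate condition since the classes are conjugacy invariant).

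For the ``only if'' direction, suppose $\fix(f)$ has a limit point $x_0 \in (0,1)$, or $f(x)-x$ has a local extremum at some $x_0 \in \fix(f)\cap(0,1)$. In either case, by Lemma \ref{l:conjugacy in [0,1]} the conjugacy class of $f$ is contained in the conjugacy-invariant set of homeomorphisms $g$ whose sign pattern $\sign(g(x)-x)$, read through some reparametrisation, exhibits the forbidden local behaviour at an interior point; in the limit-point case this forces infinitely many sign changes accumulating at an interior point, and in the extremum case it forces the sign of $g(x)-x$ to be locally constant and nonzero on both sides of a fixed point with the same sign. I would cover $(0,1)$ by countably many closed subintervals and show that for each of them the corresponding ``local extremum'' or ``accumulation of fixed points'' set is Haar null, by exhibiting a witness measure: push forward a suitable measure on $\hp([0,1])$ — for instance one supported on steep ``zig-zag'' perturbations that destroy the required sign pattern under left translation — and verify all translates get measure zero. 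Since the Haar null sets form a $\sigma$-ideal, the countable union is Haar null.

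For the ``if'' direction and the ``moreover'' clause, the heart of the matter is constructing a single Borel probability measure $\mu$ on $\hp([0,1])$ such that $\mu$-almost every $g$, and indeed every left translate $g_0 g$ of a $\mu$-random $g$, has $\fix$ with no interior limit point and no interior local extremum of $g(x)-x$ — this is exactly the ``random homeomorphism'' measure alluded to in the introduction, built somewhat like the measures of Graf–Mauldin–Williams. The construction will be a random iterated subdivision: pick the value at a midpoint from a continuous distribution, recurse on each half, and arrange the parameters so that almost surely the resulting homeomorphism has only finitely many fixed points in each interior compact set, each of which is a genuine crossing. The key technical point — and the main obstacle — is the translation-invariance part: showing that for every fixed $g_0 \in \hp([0,1])$, the composition $g_0 g$ still almost surely lies in the good set. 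This does not follow from measure-one of the untranslated event; one must show the relevant bad events are null for the pushforward of $\mu$ under left multiplication by every $g_0$, which as the authors note ``typically requires much more work.'' The idea is that $g_0 g(x) - x = 0$ iff $g(x) = g_0^{-1}(x)$, so crossing behaviour of $g_0 g$ at $x$ corresponds to the graph of the random $g$ crossing transversally the fixed graph of $g_0^{-1}$; one then argues that a random $g$ produced by the subdivision scheme crosses any fixed increasing homeomorphism only finitely often on interior compacta and always transversally, almost surely — a statement that must be proved uniformly enough to survive over all $g_0$ simultaneously, or rather for each $g_0$ separately, which suffices. Once this is established, the good set is co-Haar null with $\mu$ as witness, every good conjugacy class is non-Haar null (it has full measure intersection with... more precisely, one shows each good class is not contained in any Haar null Borel set, typically by a separate genericity-of-the-pattern argument via Lemma \ref{l:conjugacy in [0,1]}), and combining with the ``only if'' direction yields the full characterisation.
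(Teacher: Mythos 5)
Your decomposition into a conjugacy lemma, an ``only if'' direction, and an ``if plus moreover'' direction is the right skeleton, and you correctly identify Lemma \ref{l:haar null conjugacy} as the tool that reduces Haar nullness of conjugacy-invariant sets to one-sided translates. But there is a serious gap in the ``if'' direction.

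You write that the union of the good conjugacy classes being co-Haar null implies ``a fortiori'' that each good class is non-Haar null. This inference is false: a co-Haar null set may certainly contain Haar null conjugacy classes (for instance, remove one class from the union — the rest can still be co-Haar null). Indeed, the ``moreover'' clause is logically equivalent to the ``only if'' direction (both say precisely that $\mathcal{L} \cap \mathcal{H}$ is co-Haar null, where $\mathcal{L}$ is the no-interior-limit-point condition and $\mathcal{H}$ the no-local-extremum condition), whereas the ``if'' direction — that each individual admissible conjugacy class is non-Haar null — is a genuinely separate and much harder statement. You acknowledge this at the very end (``more precisely, one shows each good class is not contained in any Haar null Borel set'') but give no argument. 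This is where the bulk of the paper's proof lives: it first handles the classes with fixed points accumulating only at $0$ and/or $1$ by showing every compact set $\mathcal{K} \subset \hp([0,1])$ can be translated into the relevant set (via an Arzel\`a--Ascoli modulus of continuity for $\mathcal{K}$, a zig-zag construction, and a strictly-below-$\mathcal{K}$ extension lemma), and then handles the finitely-many-interior-fixed-points classes $\mathcal{F}_n$ by an inductive argument: assuming $\mathcal{F}_n$ is Haar null with compactly supported witness $\mu$, one localises a $\mu$-positive piece of a translate of $\mathcal{F}_{n+1}$, constructs a homeomorphism $h$ that kills exactly one fixed point, and derives $\mu(gh\mathcal{F}_n) > 0$, a contradiction. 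None of this is anticipated in your outline, and the ``a fortiori'' shortcut cannot replace it.

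On the ``only if'' direction your instinct is right that one builds a witness measure, but the proposed random iterated subdivision measure is more elaborate than needed and its translation-invariance analysis would be onerous. The paper instead pushes forward Lebesgue measure along a one-parameter family of piecewise linear ``tent'' homeomorphisms $f_a$; the key technical inputs, which your sketch doesn't identify, are a change of variables making each $f_a$ constant, a bounded variation argument via Banach's theorem (for $\mathcal{L}$: a BV function has a.e.\ finite level sets), and a counting argument assigning disjoint rational intervals to distinct parameters (for $\mathcal{H}$: strict local extrema of $g^{-1}f_a(x)-x$ can occur for only countably many $a$). These are what make the one-sided translate computations tractable, and they are not visible from the outline you give.
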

\begin{proof}
  Let 
  \begin{equation*}
  \begin{split}
    \mathcal{L} = \{&f \in \hp([0, 1]) : 
          \text{$\fix(f)$ has no limit point in $(0, 1)$}\}
    \text{ and } \\
    \mathcal{H} = \{&f \in \hp([0, 1]) : \\ &\text{$f(x) - x$ has no 
    local extremum point at any $x_0 \in \fix(f) \cap (0, 1)$}\}. 
  \end{split}
  \end{equation*}
  We first show that $\mathcal{L}$ and $\mathcal{H}$ are co-Haar null sets, 
  which will prove the ``only if'' part of the theorem. 

  \begin{claim}
    \label{c:no limit of fixed points in (0,1)}
    $\mathcal{L}$ is co-Haar null. 
  \end{claim}
  \begin{proof}
    We first prove that $\mathcal{L} \subset \hp([0, 1])$ is Borel.
      Let 
      \begin{equation*}
      \begin{split}
        \mathcal{L}_{k, l, m} = \{& f \in \hp([0, 1]) : \\ &\exists x_1, 
          \dots x_l \in [1/k, 1 - 1/k]\cap \fix(f) 
          \left(i \neq j \Rightarrow |x_i - x_j| \ge 1/m\right) \}.
      \end{split}
      \end{equation*}
      Now $\mathcal{L}_{k,l,m}$ is closed for every $k, l, m > 0$, since if
      $(f_n) \subset \mathcal{L}_{k, l, m}$ is a sequence converging uniformly to 
      $f$, 
      and $x^n_i \in [1/k, 1 - 1/k]  \cap \fix(f_n)$ for $i = 1, \dots, l$ 
      satisfies $|x^n_i - x^n_j| \ge 1/m$ for $i \neq j$, then, for an 
      appropriate subsequence, the numbers $x^n_i$ converge to some $x_i$ for 
      all 
      $i$. It is easy to see that $x_i \in [1/k, 1 - 1/k] \cap \fix(f)$ and 
      $|x_i 
      - x_j| \ge 1/m$ for $i \neq j$. Hence $f \in \mathcal{L}_{k, l, m}$, 
      showing 
      that $\mathcal{L}_{k, l, m}$ is closed. 
      
      Since $f \not \in \mathcal{L}$ if and only if $f$ has infinitely many 
      fixed 
      points in some interval $[1/k, 1 - 1/k]$, one can easily check that 
      $$
        \mathcal{L}^c = \bigcup_{k > 0} \bigcap_{l > 0} \bigcup_{m > 0}
        \mathcal{L}_{k, l, m}.
      $$
      This shows that $\mathcal{L}$ is Borel.
    
    Now as $\mathcal{L}$ is Borel and clearly conjugacy invariant, using Lemma 
    \ref{l:haar null conjugacy} it is enough to find a Borel probability 
    measure 
    $\mu$ with $\mu(\mathcal{L} g) = 1$ for every $g \in \hp([0, 
    1])$. 
    
    For $1/4 \le a \le 3/4$ let $f_a \in \hp([0, 1])$ be the piecewise linear 
    function defined by 
    \begin{equation}
      \label{e:linear witness}
      f_a(x) = \left\{ 
      \begin{array}{cl}
        2xa & \text{if $0 \le x < \frac{1}{2}$}, \\
        2(1 - a)x + 2a - 1 & \text{if $\frac{1}{2} \le x \le 1$}.
      \end{array}
      \right. 
    \end{equation}
    The map $\Phi : a \mapsto f_a$ is continuous from $[1/4, 3/4]$ to $\hp([0, 
    1])$, hence the pushforward of the normalized Lebesgue measure on $[1/4, 
    3/4]$ is a Borel probability measure on $\hp([0, 1])$. Let the pushforward 
    be 
    $\mu$, i.e., 
    \begin{equation}
      \label{e:mu}
      \mu(\mathcal{B}) = 2\lambda(\Phi^{-1}(\mathcal{B})) = 
        2\lambda(\{a : f_a \in \mathcal{B}\})
    \end{equation}
    for every Borel subset $\mathcal{B} 
    \subset \hp([0, 1])$, where $\lambda$ is the Lebesgue measure. 
    
    Let $g \in \hp([0, 1])$ be arbitrary, it is enough to show that 
    $\mu(\mathcal{L} g) = 1$. It is easy to see that 
    $$
      \mu(\mathcal{L} g) = 2\lambda(\{a : \forall k(f_a(x) = g(x) 
      \text{ for only finite many $x$ in $[1/k, 1 - 1/k]$})\}).
    $$
    Now we proceed indirectly. Suppose that $\mu(\mathcal{L} g) < 1$, 
    that is, the set of those $a$ such that $g$ intersects the graph of $f_a$ 
    infinitely many times in some interval $[c_a, 1 - c_a]$ is of positive (outer) 
    measure. Hence, we can find a common $0 < c < 1/2$ such that the set of 
    those $a$ such that $g$ intersects the graph of $f_a$ infinitely many times in 
    the interval $[c, 1 - c]$ is still of positive outer measure. 
    
    Having found this common $c$, we make some modifications to the 
    functions. For a homeomorphism $h \in \hp([0, 1])$, let 
    $h^{*} : [c, 1 - c] \to \R$ be the function defined by 
    \begin{equation*}
      h^{*}(x) = \left\{ 
      \begin{array}{cl}
        \frac{h(x)}{2x} & \text{if $c \le x < \frac{1}{2}$}, \\
        \frac{h(x)}{2(1 - x)} + \frac{1 - 2x}{2(1 - x)} & 
          \text{if $\frac{1}{2} \le x \le 1 - c$}.
      \end{array}
      \right. 
    \end{equation*}
    It is easy to see that $f_a^{*}(x) = a$ for every $a \in [1/4, 3/4]$ and 
    $x \in [c, 1 - c]$. 
    
    Note that the maps $(x, y) \mapsto (x, \frac{y}{2x})$ and $(x, y) \mapsto 
    (x, 
    \frac{y}{2(1 - x)} + \frac{1 - 2x}{2(1 - x)})$ are Lipschitz maps from 
    the region bounded by the graphs of $f_{1/4}|_{[c, 1 - c]}$ and $f_{3/4}|_{[c, 
    1 - c]}$ to $[c, 1 - c] \times 
    [1/4, 3/4]$. This can be shown by checking that the derivatives are 
    bounded. 
    Moreover, the two maps agree on $\{1/2\} \times [1/4, 3/4]$, hence the map 
    \begin{equation*}
      F(x, y) = \left\{ 
      \begin{array}{cl}
        \left(x, \frac{y}{2x}\right) & \text{if $c \le x < \frac{1}{2}$}, \\
        \left(x, \frac{y}{2(1 - x)} + \frac{1 - 2x}{2(1 - x)}\right) & 
          \text{if $\frac{1}{2} \le x \le 1 - c$}
      \end{array}
      \right. 
    \end{equation*}
    is Lipschitz on the same region. This implies that the image of a 
    rectifiable curve under $F$ is rectifiable, hence $g^{*}$ is of bounded 
    variation, as its graph is the image of the graph of $g|_{[c, 1- c]}$ under 
    $F$. 
    
    Now we finish the proof of the theorem by deducing a contradiction. We 
    derived from our indirect assumption that the set of those $a$ such that 
    the graph of $g$ intersects the graph of $f_a$ infinitely many times over 
    the 
    interval $[c, 1 - c]$ is of positive outer measure. This implies that the set of 
    those $a$ such that the graph of $g^{*}$ intersects the graph of the 
    constant 
    function $f_a^{*}$ infinitely many times is of positive outer measure. Hence, the 
    set of those $y$ such that $(g^{*})^{-1}(y)$ is infinite is of positive 
    outer measure. But $g^{*}$ is of bounded variation, hence according to a result 
    of 
    Banach (see \cite[Corollary 1]{B}), the measure of this set should be zero, 
    a contradiction. 
  \end{proof}

  \begin{claim}
    \label{c:no local extremum fixed point is co-Haar null}
    $\mathcal{H}$ is co-Haar null. 
  \end{claim}
  \begin{proof}
    First we show that $\mathcal{H} \subset \hp([0, 1])$ is Borel. It is easy 
    too see that the complement of $\mathcal{H}$ is $\bigcup_{m = 1}^\infty 
    \mathcal{H}_m$, where
    \begin{equation*}
    \begin{split}
      \mathcal{H}_m = \big\{f \in \hp([0, 1]) :\; &\exists x_0 \in [1/m, 1 
      - 
      1/m] \text{ such that $x_0$ is the } \\
      &\text{minimum or the maximum of 
        $f - \id|_{(x_0 - \frac{1}{m}, \; x_0 + \frac{1}{m})}$}\big\}.
    \end{split}
    \end{equation*}
    We show that each $\mathcal{H}_m$ is closed, from which it follows that 
    $\mathcal{H}$ is $G_\delta$, thus Borel. Let $(f_n)_{n \in \N}$ be a 
    convergent sequence of functions from $\mathcal{H}_m$. Let $x_n \in [1/m, 1 
    - 1/m]$ be a point with the property that it is the minimum or the maximum 
    of $f_n|_{(x_n - 1 / m,\; x_n + 1/m)}$. Then there exists a subsequence 
    $n_k$ such that $x_{n_k}$ converges to some $x' \in [1/m, 1 - 1/m]$ with 
    $x_{n_k}$ always either a local minimum or a local maximum of $f_{n_k}$. It 
    is easy to see that if $f$ is the limit of the sequence $(f_n)_{n \in \N}$ 
    then $x'$ is the minimum or maximum of $f|_{(x' - 1/m,\; x' + 1/m)}$, 
    showing that $\mathcal{H}_m$ is closed. This finishes the proof that 
    $\mathcal{H}$ is Borel.
    
    Now we construct a Borel probability measure on $\hp([0, 1])$, such that 
    for every $g \in \hp([0, 1])$, $\mu(g\mathcal{H}) = 1$. 
    By Lemma \ref{l:haar null conjugacy}, this indeed implies that 
    $\mathcal{H}$ is co-Haar null, as it can easily be seen using 
    Lemma \ref{l:conjugacy in [0,1]} that $\mathcal{H}$ is conjugacy 
    invariant. 
    
    This measure is the same as in the proof of Claim \ref{c:no limit of 
    fixed points in (0,1)}, for $1/4 \le a \le 3/4$ let $f_a \in \hp([0, 1])$ 
    be the piecewise linear function as in \eqref{e:linear witness}. 
    The measure $\mu$ is defined as in \eqref{e:mu}, $\mu(\mathcal{B}) = 
    2\lambda(\{a : f_a \in \mathcal{B}\})$ for a Borel set $B$,   
    where $\lambda$ is the Lebesgue measure. As before, this defines a Borel 
    probability measure on $\hp([0, 1])$. 
    Let $g \in \hp([0, 1])$, to finish the proof that $\mathcal{H}$ is 
    co-Haar null, it is enough to show that the set 
    $\{a : f_a \not\in g\mathcal{H}\} = \{a : g^{-1}f_a \not\in \mathcal{H}\}$ 
    is countable. 
    
    If for some $a \in [1/4, 3/4]$, the function $g^{-1}f_a(x) - x$ has 
    a local maximum at $x_a \in \fix(g^{-1}f) \cap (0, 1)$, then we 
    assign an open interval $(c_a, d_a)$ to $a$, where 
    $c_a, d_a \in \Q \cap (0, 1)$, $x_a \in (c_a, d_a)$ and 
    $g^{-1}(f_a(x)) - x \le 0$ if $x \in (c_a, d_a)$. 
    Such an interval exists, as $x_a \in \fix(g^{-1}f)$, 
    hence $g^{-1}(f(x_a)) - x_a = 0$, and $x_a$ is the local maximum of 
    $g^{-1}f_a(x) - x$. 
    Now we show that we cannot assign the same interval to two distinct 
    points. Suppose that we assign $(c_a, d_a)$ to $a$ and $a'$, with 
    $a < a'$. Then for every $x \in (c_a, d_a)$, we have 
    $g^{-1}(f_{a'}(x)) - x \le 0$, but for $x_{a} \in (c_a, d_a)$, 
    $0 = g^{-1}(f_{a}(x_{a})) - x_{a} < g^{-1}(f_{a'}(x_a)) - x_a$, a 
    contradiction, using the fact that $g^{-1}$ is a strictly increasing 
    function, and $f_a(x) < f_{a'}(x)$ for every $x \in (0, 1)$. 
    
    This show that the set of those $a$'s, such that 
    $g^{-1}f_a \not \in \mathcal{H}$ because there is a fixed point 
    in $(0, 1)$ which is a local maximum of $g^{-1}(f_a(x)) - x$, is 
    countable. The other reason excluding $g^{-1}f_a$ from being in 
    $\mathcal{H}$ is the existence of a fixed point that is a local 
    minimum, and one can similarly prove that this happens for only countably 
    many $a$'s. This shows that $\{a : f_a \not\in g\mathcal{H}\}$ is 
    countable, hence $\mathcal{H}$ is co-Haar null. 
  \end{proof}

  Thus the proof of the ``only if'' part of the theorem is complete. 
  
  Now we prove that every conjugacy class described in the 
  theorem is indeed non-Haar null. Let $\mathcal{C}$ be such a conjugacy class, 
  then for those fixed points of a function $f \in \mathcal{C}$ that fall into 
  the 
  open interval $(0, 1)$ we have the following possibilities:
  \begin{itemize}
    \item[(i)] $\fix(f) \cap (0, 1)$ is a countably infinite set with 
      limit points $0$ and $1$, 
    \item[(ii)] $\fix(f) \cap (0, 1)$ is a countably infinite set with 
      the unique limit point $0$,
    \item[(iii)] $\fix(f) \cap (0, 1)$ is a countably infinite set with 
      the unique limit point $1$, 
    \item[(iv)] $|\fix(f) \cap (0, 1)| = n$ for some $n \in \N$.  
  \end{itemize}
  The set $(0, 1) \setminus \fix(f)$ is the union countably many 
  open intervals, and since every fixed point 
  $x_0 \in \fix(f) \cap (0, 1)$ is isolated, each of these intervals 
  has a neighbouring interval on each side. 
  In each of these intervals, the function $f(x) - x$ is 
  either positive or negative, but as $f(x) - x$ has no local extremum point at 
  any fixed point, the sign of $f(x) - x$ is distinct in neighbouring intervals. 
  
  Hence, each of the last three cases in the list describes two conjugacy 
  classes (the forth case describes two conjugacy classes for each $n$): 
  there is a first or last open interval of 
  $(0, 1) \setminus \fix(f)$, in which the sign of $f(x) - x$ 
  can be chosen to be either positive or negative, but this 
  choice determines the sign on every other interval. 
  Whereas the first case describes only one possible conjugacy class. 
  One can easily see using Lemma \ref{l:conjugacy in [0,1]} that these 
  possibilities indeed describe conjugacy classes. 

  Now we prove that in case (ii), (iii) and (iv), in order to show that each of 
  these classes are non-Haar null, it is enough to show that the 
  union of the two described conjugacy classes is non-Haar null. 
  The proof is the same in each case, 
  so let $\mathcal{C}_1$ and $\mathcal{C}_2$ be the two conjugacy classes in 
  either case and suppose that $\mathcal{C}_1$ is Haar null. We show that then 
  $\mathcal{C}_2$ is Haar null, hence so is the union, contradicting our 
  assumption. We use the following claim for the proof. 
  \begin{claim}
    \label{c:invariance under inverse}
    Let $G$ be a Polish topological group and $H \subset G$ a Haar null set. 
    Then the set $\{h^{-1} : h \in H\}$ is also Haar null. 
  \end{claim}
  \begin{proof}
    Let $\mu$ be a witness measure for $H$, and let $\mu'$ be the measure with 
    $\mu'(B) = \mu(\{b^{-1} : b \in B\})$. Now $\mu'$ is a witness 
    measure for the set $H^{-1} = \{h^{-1} : h \in H\}$, since 
    $$
    \mu'(aH^{-1}b) = \mu'(\{ah^{-1}b : h \in H\}) = 
    \mu(\{b^{-1}ha^{-1} : h \in H\}) = 0.
    $$
  \end{proof}
  Using the claim, the image of $\mathcal{C}_1$ under the homeomorphism $f 
  \mapsto f^{-1}$ is also Haar null. But it is easy to see that the image is 
  $\mathcal{C}_2$ (regardless of which case we are working with), hence 
  $\mathcal{C}_2$ is also Haar null, and so is $\mathcal{C}_1 \cup 
  \mathcal{C}_2$. 
    
  Now we state three lemmas that will be essential for the rest of the proof. 
  \begin{lemma}
    \label{l:arzela--ascoli}
    Let $\mathcal{K} \subset \hp([0, 1])$ be a compact set. Then there exists 
    $\delta : (0, \infty) \to (0, \infty)$ such that for every  
    $\varepsilon > 0$, $f \in \mathcal{K}$ and $x, y \in [0, 1]$ with $|x - y| 
    \le \delta(\varepsilon)$, we have $|f(x) - f(y)| < \varepsilon$ and 
    $|f^{-1}(x) - f^{-1}(y)| < \varepsilon$. 
  \end{lemma}
  \begin{proof}
    Let $\mathcal{K}' = \mathcal{K} \cup \{f^{-1} : f \in \mathcal{K}\}$. Then 
    $\mathcal{K}'$ is compact, since the map $f \mapsto f^{-1}$ is a 
    homeomorphism.  
    As $\hp([0, 1])$ is a subspace of $C([0, 1])$, the space of continuous 
    function defined on $[0, 1]$ with the supremum norm, we can apply the 
    Arzel\`a--Ascoli theorem for $\mathcal{K}'$. Hence, there exists a function 
    $\delta : (0, \infty) \to (0, \infty)$ such that for all $f \in 
    \mathcal{K}'$ and $x, y \in [0, 1]$ with $|x - y| \le \delta(\varepsilon)$, 
    we have $|f(x) - f(y)| < \varepsilon$. As $\mathcal{K}'$ contains $f$ and 
    $f^{-1}$ for every $f \in \mathcal{K}$, we are done.
  \end{proof}
  
  \begin{lemma}
    \label{l:cikk-cakk}
    Let $\mathcal{K} \subset \hp([0, 1])$ be compact and $x_0, y_0 \in (0, 
    1)$, then there exists $g \in \hp([0, 1])$ with $g(x_0) = y_0$ and 
    such that both $0$ and $1$ are among the limit points of 
    $\fix(g^{-1}f)$ for every $f \in \mathcal{K}$. 
  \end{lemma}
  \begin{proof}
    Let $\delta : (0, \infty) \to (0, \infty)$ be the function given by the 
    previous lemma for $\mathcal{K}$. Let $x_1 = x_0 + (1 - x_0) /2 < 
    1$, we show that there exists $y_1 < 1$ such that $y_1 > y_0$ and $y_1 > 
    f(x_1)$ for every $f \in \mathcal{K}$. Indeed, any $y_1$ with $\max\{y_0, 1 
    - \delta(1 - x_1)\} < y_1 < 1$ works, since if $|1 - y_1| < \delta(1 - 
    x_1)$ then $|f^{-1}(1) - f^{-1}(y_1)| < 1 - x_1$, showing that $f^{-1}(y_1) 
    > x_1$, hence $f(x_1) < y_1$ for every $f \in \mathcal{K}$. 
    Then let $y_2 = y_1 + (1- y_1) /2$ and $x_2 < 1$ be such that $x_2 > 
    x_1$ and $f(x_2) > y_2$ for every $f \in \mathcal{K}$. One can easily get 
    an appropriate $x_2$ by a similar argument, choosing $x_2$ such that 
    $\max\{x_1, 1 - \delta(1 - y_2)\} < x_2 < 1$. 
    
    By iterating the argument, one can easily get two strictly increasing 
    sequences $(x_n)_{n \in \N}$ and $(y_n)_{n \in \N}$ such that they are 
    converging to 1, and $f(x_{2n}) > y_{2n}$, $f(x_{2n + 1}) < y_{2n + 1}$ for 
    every $f \in \mathcal{K}$ and $n \in \N \setminus \{0\}$. 
    Moreover, similarly one can get two decreasing sequences 
    $(z_n)_{n \in \N}$ and $(u_n)_{n \in \N}$ with $z_0 < x_0$ and $u_0 < y_0$ 
    such that they are converging to 0, and $f(z_{2n}) < u_{2n}$, 
    $f(z_{2n + 1}) > u_{2n + 1}$ for every $f \in \mathcal{K}$ and $n \in \N$. 
    
    One can easily see that $(0, 1) = (z_0, x_0) \cup \bigcup_{n \in \N} (z_{n 
    + 1}, z_n] \cup [x_n, x_{n + 1})$. Moreover, by defining the function $g : 
    [0, 1] \to [0, 1]$ such that $g(x_n) = y_n$ and $g(z_n) = u_n$, for every 
    $n \in \N$ and $g$ is linear on intervals of the form $[x_n, x_{n + 1}]$, 
    $[z_{n + 1}, z_n]$ and on $[z_0, x_0]$ with $g(0) = 0$ and $g(1) = 1$, one 
    gets an increasing homeomorphism of $[0, 1]$ such that the graph of $g$ 
    intersects the graph of every $f \in \mathcal{K}$ infinitely many times in 
    every neighbourhood of $0$ and $1$. Thus the proof of the lemma is complete. 
  \end{proof}
  
  \begin{lemma}
    \label{l:extension of g}
    Let $0< x_0, y_0 < 1$ with $f(x_0) > y_0$ for every $f \in \mathcal{K}$. 
    Then there exists a function $g \in \hp([0, 1])$ such that $g(x_0) = y_0$ 
    and $f(x) > g(x)$ for every $x \in (0, 1)$ and $f \in \mathcal{K}$. 
  \end{lemma}
  \begin{proof}
    For $x \in [0, 1]$, let $h(x) = \min_{f \in \mathcal{K}} f(x)$. The minimum 
    exists, since the image of $\mathcal{K}$ is compact in $[0, 1]$ under the 
    continuous map $f \mapsto f(x)$. Using \cite[Theorem 2.2]{ST}, $h \in 
    \hp([0, 1])$. Let $T : [0, 1] \to [0, 1]$ be the 
    function with $T(0) = T(x_0) = \frac{y_0}{h(x_0)}$, $T(1) = 1$  
    and $T$ is linear in the intervals $[0, x_0]$ and $[x_0, 1]$. Let 
    $$
    g(x) = T(x) h(x),
    $$
    we show that $g$ satisfies the conditions of the lemma. It is easy to see 
    that $g(0) = 0$, $g(1) = 1$ and $g(x_0) = y_0$, and also that it is 
    continuous. Since $T$ is a positive, increasing function, for $0 \le x < y 
    \le 1$ we also have 
    \begin{equation*}
    \begin{split}
      g(y) - g(x) = T(y) h(y) - T(x) h(x) \ge 
      T(x) (h(y) - h(x)) > 0, 
    \end{split}
    \end{equation*}
    showing that $g$ is strictly increasing, hence $g \in \hp([0, 1])$. For 
    $x \in (0, 1)$, $T(x) < 1$, hence $g(x) < h(x) \le f(x)$ for every $f \in 
    \mathcal{K}$. This finishes the proof of the lemma. 
  \end{proof}
  
  Now we prove that the conjugacy classes described by (i), (ii), (iii) and 
  (iv) are indeed non-Haar null. We first deal with the cases (i), (ii) and 
  (iii), as they are easy consequences of the above facts. As already discussed above, cases (ii) and (iii) each describe two conjugacy classes, and in order to prove that both are non-Haar null, it is enough to show that their union is non-Haar null. 
    
  So let 
  \begin{equation*}
  \begin{split}
    \mathcal{F}_{(i)} &= \{f \in \hp([0, 1]) : 
    \text{$0$ and $1$ is among the limit points of $\fix(f)$}\}, \\
    \mathcal{F}_{(ii)} &= \{f \in \hp([0, 1]) : 
    \text{$0$ is a limit point of $\fix(f)$, but $1$ is not}\}, \\
    \mathcal{F}_{(iii)} &= \{f \in \hp([0, 1]) : 
    \text{$1$ is a limit point of $\fix(f)$, but $0$ is not}\}. 
  \end{split}
  \end{equation*}
  Since the conjugacy class described by (i) is $\mathcal{F}_{(i)} \cap 
  \mathcal{H} \cap \mathcal{L}$, and $\mathcal{H}$ and $\mathcal{L}$ are 
  co-Haar null, it is enough to prove that $\mathcal{F}_{(i)}$ is non-Haar null 
  to finish this case. Similarly, in cases (ii) and (iii), the union of the two 
  conjugacy classes are $\mathcal{F}_{(ii)} \cap \mathcal{H} \cap \mathcal{L}$ 
  and $\mathcal{F}_{(iii)} \cap \mathcal{H} \cap \mathcal{L}$, hence for these 
  cases it is enough to show that $\mathcal{F}_{(ii)}$ and 
  $\mathcal{F}_{(iii)}$ are non-Haar null. 

  To show that these three sets are non-Haar null, it is enough to prove that 
  every compact set can be translated into them, as above. So let 
  $\mathcal{K} \subset \hp([0, 1])$ be compact. Let $x_0 \in (0, 1)$ be such 
  that $f(x_0) > 1/2$ for every $f \in \mathcal{K}$, for example $x_0 = 1 - 
  \delta(1/2)$ works, where $\delta$ is the function provided by Lemma 
  \ref{l:arzela--ascoli}. To show that $\mathcal{K}$ can be translated into 
  $\mathcal{F}_{(i)}$, let $g$ be the function provided by Lemma 
  \ref{l:cikk-cakk} with $x_0$ and $y_0 = 1/2$. It is clear that $g^{-1} 
  \mathcal{K} \subset \mathcal{F}_{(i)}$. 
  
  Now let $h$ be the function provided by Lemma \ref{l:extension of g} with 
  $x_0$ as above and $y_0 = 1/2$, and let $g_{(ii)}$ agree with $g$ on $[0, 
  x_0]$, and agree with $h$ on $[x_0, 1]$. Then $g_{(ii)}$ intersects the graph 
  of every $f \in \mathcal{K}$ infinitely many times in every neighbourhood of 
  $0$, but is disjoint from the graph of every $f \in \mathcal{K}$ in a small 
  deleted neighbourhood of $1$. From this one can easily see that $g_{(ii)}^{-1} 
  \mathcal{K} \subset \mathcal{F}_{(ii)}$. Similarly, if $g_{(iii)}$ agrees 
  with $g$ on $[x_0, 1]$ and agrees with $h$ on $[0, x_0]$, then 
  $g_{(iii)}^{-1}\mathcal{K} \subset \mathcal{F}_{(iii)}$, showing that each 
  of these three sets are non-Haar null. This finishes the proof that the 
  conjugacy classes described in (i), (ii) and (iii) are non-Haar null. 
  
  It remains to be proven that for each $n$, the two conjugacy classes 
  described by 
  (iv) are non-Haar null. 
  
  Let 
  $$
    \mathcal{F}_n = \{f \in \hp([0, 1]) : |\fix(f)| = n + 2\}.
  $$
  We claim that it is enough to prove that for each $n \in \N$, $\mathcal{F}_n$ 
  is non-Haar null. For a fixed $n$, if $\mathcal{F}_n$ is non-Haar null, then 
  the union of the two conjugacy classes described by (iv) is also non-Haar 
  null, since the union equals $\mathcal{F}_n \cap \mathcal{H} \cap 
  \mathcal{L}$, the intersection of a non-Haar null and two co-Haar null sets. 
  And as noted before, this implies that both of these classes are non-Haar 
  null. 
    
  First we show that for each $n \in \N$, the set 
  $$
    \mathcal{F}_{\ge n}^{fin} = 
    \{f \in \hp([0, 1]) : n + 2 \le |\fix(f)| < \aleph_0\}
  $$
  is non-Haar null. In order to show this, it is enough to prove that the set 
  \begin{equation*}
  \begin{split}
    \mathcal{F}'_{\ge n} = \{f \in \hp([0, 1]) : \text{$n + 2 \le |\fix(f)|$ 
    and neither $0$ nor $1$} \\ \text{is among the limit points of $\fix(f)$}\}
  \end{split}
  \end{equation*}
  is non-Haar null, since $\mathcal{F}_{\ge n}^{fin} = \mathcal{F}'_{\ge n} 
  \cap \mathcal{L}$, thus then $\mathcal{F}_{\ge n}^{fin}$ is the 
  intersection of a non-Haar null and a co-Haar null set. 
  
  We prove that $\mathcal{F}'_{\ge n}$ is non-Haar null by showing that every 
  compact subset of $\hp([0, 1])$ can be translated into it. So let 
  $\mathcal{K} \subset \hp([0, 1])$ be compact. Similarly as before, there 
  exist $x_0, y_0 \in (0, 1)$ with $f(x_0) > y_0$ for every $f \in 
  \mathcal{K}$. Then, again with the same ideas as in the proof of Lemma 
  \ref{l:cikk-cakk}, one can find two 
  strictly increasing finite sequences in $(0, 1)$, $(x_k)_{k = 0}^{2n}$ and 
  $(y_k)_{k = 0}^{2n}$ such that $f(x_{2m}) > y_{2m}$, $f(x_{2m + 1}) < y_{2m + 
  1}$ for every $f \in \mathcal{K}$ and every suitable $m$. Then let $g$ be 
  defined on the interval $[x_0, x_{2n}]$ by $g(x_k) = y_k$ for every $k \le 
  2n$ and $g$ is linear on each interval $[x_k, x_{k + 1}]$ for every $k < 
  2n$. Then $g$ is a strictly increasing continuous function on $[x_0, 
  x_{2n}]$. Define $g$ on the remaining intervals of $[0, 1]$ with the help of 
  Lemma \ref{l:extension of g} such that $g(x) < f(x)$ for every $x \in (0, 
  x_0) \cup (x_{2n}, 1)$ and $f \in \mathcal{K}$. Then $g \in \hp([0, 1])$, and 
  clearly $g^{-1} \mathcal{K} \subset \mathcal{F}'_{\ge n}$, since $g$ 
  intersects every $f \in \mathcal{K}$ at least $n$ times (actually at least 
  $2n$ times), but there is no intersection in the intervals $(0, x_0)$ and 
  $(x_{2n}, 1)$. This shows that $\mathcal{F}'_{\ge n}$ and thus also that 
  $\mathcal{F}_{\ge n}^{fin}$ is non-Haar null. 
  
  Using that $\mathcal{F}_{\ge n}^{fin} = \bigcup_{m \ge n} \mathcal{F}_m$, and 
  the fact that the countable union of Haar null sets is Haar null, we conclude 
  that at least one of the $\mathcal{F}_m$'s must be non-Haar null, hence for 
  each $n \in \N$ there exists $m \ge n$ such that $\mathcal{F}_m$ is non-Haar 
  null. It follows that to show that each $\mathcal{F}_n$ is non-Haar null, 
  it is enough to prove the following lemma: 
  \begin{lemma}
    For a fixed $n$ if $\mathcal{F}_{n + 1}$ is non-Haar null then 
    $\mathcal{F}_n$ is non-Haar null.
  \end{lemma}
  \begin{proof}
    First we prove that $\mathcal{F}_k$ is Borel for every $k \in \N$. 
    Let 
    $$
      \mathcal{F}_{\ge n} = \{f \in \hp([0, 1]) : n + 2 \le |\fix(f)|\},
    $$
    then $\mathcal{F}_k = \mathcal{F}_{\ge k} \setminus \mathcal{F}_{\ge k + 
    1}$, hence it is enough to show that $\mathcal{F}_{\ge k}$ is Borel for 
    every $k$. 
    Let
    $$
      \mathcal{F}_{\ge k}^\delta = \{f : \exists \; x_1, \dots, x_{k + 2} \in 
      [0, 1] \; \forall i \neq j (f(x_i) = x_i \land |x_i - x_j| \ge \delta)\}.
    $$
    It is easy to see that $\mathcal{F}_{\ge k} = \bigcup_{m \in \N} 
    \mathcal{F}_{\ge k}^{1/m}$, and also that $\mathcal{F}_{\ge k}^{\delta}$ is 
    closed, hence $\mathcal{F}_{\ge k}$ is $F_\sigma$, thus Borel. 
    
    Now, suppose towards a contradiction that $\mathcal{F}_n$ is Haar null. 
    Then there exists a Borel probability measure $\mu$ such that every two-sided
    translate of $\mathcal{F}_n$ is of measure 0 with respect to $\mu$. Using 
    \cite[17.11]{kechrisbook}, there is a compact set $\mathcal{K}$ with 
    $\mu(\mathcal{K}) > 0$, hence by restricting $\mu$ to $\mathcal{K}$ and 
    multiplying accordingly to get a probability measure, we again obtain a 
    measure with the property that every translate of $\mathcal{F}_n$ is of 
    measure 0. Hence, we can suppose that $\mu$ has compact support, which we 
    do from now on. 
    Since $\mathcal{F}_{n + 1}$ is not Haar null, $\mu$ cannot be a witness for 
    it, hence there exists $g \in \hp([0, 1])$ such that $\mu(g \mathcal{F}_{n 
    + 1}) > 0$, where we used Lemma \ref{l:haar null conjugacy} for the fact 
    that it is enough to consider one sided translates of 
    $\mathcal{F}_{n + 1}$. 
    
    Let us fix three rational numbers $p$, $q$ and $r$ with $0 < p < q < r < 
    1$. For a function $f \in \mathcal{F}_{n + 1}$, the first, i.e., the 
    smallest fixed point of $f$ is 0. Let $\mathcal{F}_{n + 1}^{p, q, r}$ 
    contain those $f$'s, such that the second fixed point of $f$ is in $(p, 
    q)$, while the other $n + 1$ fixed points are in $(r, 1]$, that is 
    $$
      \mathcal{F}_{n + 1}^{p, q, r} = \{f \in \mathcal{F}_{n + 1} : 
      |\fix(f) \cap (p, q)| = 1, |\fix(f) \cap (r, 1]| = n + 1\}.
    $$
    It is easy to see that 
    $$
      \mathcal{F}_{n + 1} = \bigcup_{\begin{subarray}{c}
        p, q, r \in \Q \cap (0, 1) \\
        p < q < r 
      \end{subarray}} \mathcal{F}_{n + 1}^{p, q, r}, 
    $$
    hence for some $0 < p, q, r < 1$, $\mu(g\mathcal{F}_{n + 1}^{p, q, r}) > 
    0$. A function $f \in \mathcal{F}_{n + 1}^{p, q, r}$ has no fixed points in 
    $[q, r]$, hence either $f(x) < x$ or $f(x) > x$ for every $x \in [q, r]$. 
    Let ${\mathcal{F}_{n + 1}^{p, q, r}}'$ contain those $f \in 
    \mathcal{F}_{n+1}^{p, q, r}$ with $f(x) > x$ for every $x \in [q, r]$, and 
    let ${\mathcal{F}_{n + 1}^{p, q, r}}''$ contain those with $f(x) < x$ for 
    every $x \in [q, r]$. Then either $\mu(g{\mathcal{F}_{n + 1}^{p, q, r}}') > 
    0$ or $\mu(g{\mathcal{F}_{n + 1}^{p, q, r}}'') > 0$. The rest of the proof 
    of the lemma is similar in these two cases, we only show the proof for the 
    first case. 
    
    So suppose that $\mu(g{\mathcal{F}_{n + 1}^{p, q, r}}') > 0$. This implies 
    $\mu(g{\mathcal{F}_{n + 1}^{p, q, r}}' \cap \mathcal{K}) > 0$ where 
    $\mathcal{K}$ is the support of $\mu$. Now we define a function $h \in 
    \hp([0, 1])$ in the following way: let $h(x) = x$ for $x \in [r, 1]$, let 
    $h(\frac{q + r}{2}) = \frac{q + r}{4}$ and let $h$ be linear on the 
    interval $[\frac{q + r}{2}, r]$. Now $h(\frac{q + r}{2}) < f(\frac{q + 
    r}{2})$ for every $f$ in the closure of ${\mathcal{F}_{n + 1}^{p, q, r}}' 
    \cap g^{-1} \mathcal{K}$. The closure is a compact set, hence we can use 
    Lemma \ref{l:extension of g} to extend $h$ to the interval $[0, \frac{q + 
    r}{2}]$ to be in $\hp([0, 1])$ with the property that $h(x) < f(x)$ for 
    every $x \in (0, \frac{q + r}{2})$ and every $f$ in the closure of 
    ${\mathcal{F}_{n + 1}^{p, q, r}}' \cap g^{-1} \mathcal{K}$. 
    
    One can easily see that for every $f \in h^{-1}({\mathcal{F}_{n + 1}^{p, q, 
    r}}' \cap g^{-1} \mathcal{K})$, $f \in \mathcal{F}_n$, since $h$ was 
    constructed so that the translate of ${\mathcal{F}_{n + 1}^{p, q, r}}' \cap 
    g^{-1} \mathcal{K}$ by $h^{-1}$ preserves every fixed point in $[r, 1]$ but 
    the fixed point in $(p, q)$ vanishes. Consequently, 
    $$
      g{\mathcal{F}_{n + 1}^{p, q, r}}' \cap \mathcal{K} = 
      g({\mathcal{F}_{n + 1}^{p, q, r}}' \cap g^{-1} \mathcal{K}) = 
      ghh^{-1}({\mathcal{F}_{n + 1}^{p, q, r}}' \cap g^{-1} \mathcal{K}) 
      \subset gh\mathcal{F}_n, 
    $$
    hence $\mu(gh\mathcal{F}_n) \ge \mu(g{\mathcal{F}_{n + 1}^{p, q, r}}' \cap 
    \mathcal{K}) > 0$, contradicting our assumption that $\mu$ is a witness 
    measure for $\mathcal{F}_n$. This finishes the proof of the lemma.
  \end{proof}
  The moreover part of Theorem \ref{t:homeo [0,1] conjugacy classes} follows 
  from the first part of the theorem and from the 
  fact that the set $\{f \in \hp([0, 1]) : \fix(f)$ has no limit point in $(0, 
  1)$ and $f(x) - x$ has no local extremum point at any $x_0 \in \fix(f) \cap 
  (0, 1)\}$ is co-Haar null using Claim \ref{c:no limit of fixed points in 
  (0,1)} and Claim \ref{c:no local extremum fixed point is co-Haar null}. 
  Therefore, the proof of the theorem is also complete. 
\end{proof}

\section{Homeomorphisms of the circle}
\label{s:homeo S^1}

In this section we investigate $\hp(\mathbb{S}^1)$, the group of orientation 
preserving homeomorphisms of the circle $\mathbb{S}^1 = \R / \Z$, with the uniform topology. 
The purpose of this section is to prove our main theorem about the group 
$\hp(\mathbb{S}^1)$, Theorem \ref{t:haar positive conj classes in homeo s1}. Before 
turning to the proof, we first introduce the basic definitions and state some 
classical results about circle homeomorphisms. All of these 
results can be found in \cite{KH}. 

For $f \in \hp(\mathbb{S}^1)$, an increasing homeomorphism $F \in 
\hp(\R)$ is a \emph{lift} of $f$, if $f(x) \equiv F(x) \pmod{1}$ for each $x \in [0, 
1)$ and $F(x + 1) = F(x) + 1$ for each $x \in \R$. It is easy to see that if 
$F_1$ and $F_2$ are two lifts of $f \in \hp(\mathbb{S}^1)$ then there exists a $k \in 
\Z$ such that $F_1(x) - F_2(x) = k$ for each $x \in \R$. For $\alpha \in \R$ 
let $R_\alpha \in \hp(\mathbb{S}^1)$ denote the function $x \mapsto x + \alpha 
\pmod{1}$. Then let $\overline{R}_\alpha \in \hp(\R)$ be the homeomorphism 
$\overline{R}_\alpha(x) = x + \alpha$. Note that $R_0 \equiv R_1$, although 
$\overline{R}_0 \not \equiv \overline{R}_1$. 

If $f \in \hp(\mathbb{S}^1)$ and $F$ is a lift of $f$ then the quantity
$$
  \tau(F) = \lim_{n \to \infty} \frac{1}{n} (F^n(x) - x)
$$
is independent of $x$, and is well defined up to an integer, that is, 
$\tau(F) - \tau(F') \in \Z$ if $F'$ is another lift of $f$. Then it 
makes sense to define the \emph{rotation number} of $f$ as $\tau(f) \equiv 
\tau(F) \pmod{1}$, hence $\tau(f) \in \mathbb{S}^1$. It is well-known that $\tau$ is 
continuous and conjugacy invariant. We will use it to classify the 
conjugacy classes in $\hp(\mathbb{S}^1)$ and also to prove some properties of them.

An \emph{orbit} of a point $x \in \mathbb{S}^1$ under $f \in \hp(\mathbb{S}^1)$ is $\{f^n(x) : n 
\in \Z\}$. A point $x \in \mathbb{S}^1$ is a \emph{periodic point} of $f \in \hp(\mathbb{S}^1)$ 
if $f^n(x) = x$ for some $n \in \N$, $n \ge 1$. An orbit is \emph{periodic} if 
it is the orbit of a periodic point. The \emph{period} of a periodic point $x$ 
is the least $n \ge 1$ such that $f^n(x) = x$. It is a well known fact that 
$\tau(f) \in \Q$ if and only if $f$ has periodic points, and if $\tau(f) \in 
\Q$ then all periodic points have the same period. Moreover, if $\tau(f) = p/q$ 
with $p$, $q$ relatively prime, $0 \le p < q$ then every periodic point of 
$f$ has period $q$. 

If $x \in \mathbb{S}^1$ is a fixed point of $f \in \hp(\mathbb{S}^1)$ and $F$ is a lift of $f$ 
then $F(x') = x' + k$ for some $k \in \Z$, where $x' \in \R$ is any real 
number with $x \equiv x' \pmod{1}$. We say that $x$ is a \emph{crossing fixed 
point} if for every neighbourhood $U \subset \R$ of $x'$ there is a point $y_1 
\in U$ with $F(y_1) < y_1 + k$, and there is another point $y_2 \in U$ with  
$F(y_2) > y_2 + k$. If $x$ is a periodic point of $f$ and $n$ is the 
smallest positive integer with $f^n(x) = x$ then we say that $x$ is a 
\emph{crossing periodic point} of $f$ if $x$ is a crossing fixed point of 
$f^n$. 

We introduce the relation $x < y < z$ for three elements $x, y, z \in 
\mathbb{S}^1$, by saying that $x < y < z$ holds if and only if $x' < y' < z'$ or $z' 
< x' < y'$ or $y' < z' < x'$, where $x'$, $y'$ and $z'$ are the unique 
representatives of $x$, $y$ and $z$, respectively, in $[0, 1)$. We also use 
the notation $x_1 < x_2 < \dots < x_n$ for $x_1, \dots, x_n \in \mathbb{S}^1$ if and 
only if $x_i < x_j < x_k$ for $i < j < k$. Also, for $x, y \in \mathbb{S}^1$ let 
$(x, y) = \{z \in \mathbb{S}^1 : x < z < y\}$, and analogously for $[x, y]$, $[x, 
y)$ and $(x, y]$. 

A fixed point $x \in \mathbb{S}^1$ of $f$ is \emph{attractive} if there is an interval 
$(u, v)$ such that $x \in (u, v)$ and for every $y \in (u, v)$, $f^n(y) \to x$. 
A fixed point $x$ is \emph{repulsive} if there is an interval $(u, v)\ni x$  such 
that for every $y \in (u, v) \setminus \{x\}$ and for every large enough $n \in 
\N$, $f^n(y) \not \in (u, v)$. The proofs of the following lemmas are standard 
and are left to the reader. 

\begin{lemma}
  \label{l:attractive, repulsive fixed points}
  Suppose that $f \in \hp(\mathbb{S}^1)$ has some, but only finitely many fixed 
  points. Let $x$ be a fixed point of $f$, and let $y$ and $z$ be two fixed 
  points (not necessarily different from $x$) such that $y < x < z$ and there 
  are no fixed points in the intervals $(y, x)$ and $(x, z)$. Then $x$ is 
  attractive if and only if $f(p) \in (p, x)$ for every $p \in (y, x)$ and 
  $f(p) \in (x, p)$ for every $p \in (x, z)$. Similarly, $x$ is repulsive if 
  and only if $f(p) \not \in (p, x)$ for every $p \in (y, x)$ and $f(p) \not 
  \in (x, p)$ for every $p \in (x, z)$. 
\end{lemma}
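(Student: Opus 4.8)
The plan is to transfer everything to the universal cover $\R$, where the dynamics near $x$ become monotone and the classical one–dimensional argument applies; all the real content is then a monotone–convergence argument, and the work goes into the bookkeeping with lifts.

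\emph{Step 1 (reduction to the line).} Since $x$ is a fixed point, I would first pick a lift $F \in \hp(\R)$ of $f$ and a representative $\tilde x \in \R$ of $x$ with $F(\tilde x) = \tilde x$, and fix this $F$. Then $\fix(F)$ is closed, $1$–periodic, and, using that $\tau(F) = 0$ (true because $F(\tilde x)=\tilde x$), every fixed point of $f$ has a representative in $\fix(F)$, so $\fix(F)$ projects \emph{onto} $\fix(f)$ and is finite in each interval of length $1$. Lift $y, z$ to the points $\tilde y, \tilde z$ of $\fix(F)$ adjacent to $\tilde x$ on the left and right, so that $0 < \tilde x - \tilde y \le 1$, $0 < \tilde z - \tilde x \le 1$ and the arcs $(y, x)$, $(x, z)$ are exactly the projections of $(\tilde y, \tilde x)$, $(\tilde x, \tilde z)$ (the degenerate cases $y = x$ or $z = x$ correspond to $\tilde x - \tilde y = 1$ or $\tilde z - \tilde x = 1$ and cause no trouble). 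Because $F$ is increasing and fixes $\tilde y, \tilde x, \tilde z$, it maps $[\tilde y, \tilde x]$ and $[\tilde x, \tilde z]$ onto themselves, so for $p \in (y,x) \cup (x,z)$ the orbit $(f^n(p))_n$ is faithfully represented by $(F^n(\tilde p))_n$ with no wrap–around. Moreover $F(t) - t$ is continuous and nonvanishing on each of $(\tilde y, \tilde x)$, $(\tilde x, \tilde z)$ (no fixed points of $F$ strictly between consecutive ones), hence of constant sign there; and one checks that, with this sign understood, the two hypotheses of the lemma say exactly
$$
F(t) > t \text{ on } (\tilde y, \tilde x) \quad\text{and}\quad F(t) < t \text{ on } (\tilde x, \tilde z)
$$
in the attractive case, and the reversed inequalities in the repulsive case. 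It is precisely the constancy of the sign that makes "for some $t$" equivalent to "for all $t$", i.e.\ to "for every $p$".

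\emph{Step 2 (the attractive equivalence).} If $F(t) > t$ on $(\tilde y, \tilde x)$ then for every $\tilde p$ there the orbit $F^n(\tilde p)$ is increasing and bounded above by $\tilde x$, so it converges to a fixed point of $F$ in $(\tilde p, \tilde x]$, which must be $\tilde x$; symmetrically $F^n(\tilde p) \to \tilde x$ for $\tilde p \in (\tilde x, \tilde z)$. Taking $(u,v)$ to be $(y,z)$ (or any subarc containing $x$) this yields $f^n(p) \to x$ for all $p \in (u,v)$, so $x$ is attractive. Conversely, if $x$ is attractive via some $(u,v) \ni x$, I would shrink $(u,v)$ so that $(u,v) \subset (y,z)$ with $u \in (y,x)$ and $v \in (x,z)$; if the sign were the opposite on, say, $(\tilde y, \tilde x)$, then $F^n(\tilde p)$ would be decreasing for $\tilde p \in (\tilde u, \tilde x)$ and could not tend to $\tilde x$, contradicting $f^n(p) \to x$. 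Hence $F(t) > t$ on $(\tilde y, \tilde x)$, and likewise $F(t) < t$ on $(\tilde x, \tilde z)$; by Step 1 this is the stated condition.

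\emph{Step 3 (the repulsive equivalence).} This is the same argument with the inequalities reversed. When $F(t) < t$ on $(\tilde y, \tilde x)$ and $F(t) > t$ on $(\tilde x, \tilde z)$, every orbit starting in a small punctured two–sided neighbourhood of $\tilde x$ is monotone \emph{away} from $\tilde x$, so for a sufficiently small $(u,v)$ with $u \in (y,x)$, $v \in (x,z)$ it eventually leaves $(u,v)$ and, by monotonicity in $\R$, never returns; hence $x$ is repulsive. Conversely a "wrong" sign produces, as in Step 2, an orbit converging to $x$, contradicting repulsiveness. I do not anticipate a genuine obstacle: the only delicate points are all in Step 1 — choosing the lift with $F(\tilde x) = \tilde x$, checking that $\fix(F)$ projects onto $\fix(f)$ so that the fixed–point–free arc $(y,x)$ of $f$ really does give constancy of the sign of $F - \id$ on $(\tilde y, \tilde x)$, and arranging that $(u,v)$ lies inside $(y,z)$ so that no wrap–around in $\mathbb{S}^1$ can spoil the monotone–convergence reasoning.
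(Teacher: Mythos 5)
Your argument is correct in every case in which the lemma's hypothesis can actually be instantiated, and it is precisely the standard lift-and-monotone-convergence proof that the paper omits (``standard and left to the reader''): choose the lift with $F(\tilde x)=\tilde x$, observe that all fixed points of $f$ lift to fixed points of $F$, that $F$ preserves $[\tilde y,\tilde x]$ and $[\tilde x,\tilde z]$, that $F-\id$ has constant sign on each of these intervals, and then run monotone convergence in both directions. The one inaccurate spot is the parenthetical claim that the degenerate case $y=x$ (equivalently $x$ is the unique fixed point, so $\tilde x-\tilde y=\tilde z-\tilde x=1$) ``causes no trouble.'' There the two windows coincide modulo $1$, so $F-\id$ has the same sign on both; your translated attractivity criterion can then never hold, and yet $x$ \emph{is} attractive in the paper's sense, since every lifted orbit converges monotonically to $\tilde x$ or to $\tilde x\pm 1$, all of which project to $x$. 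Correspondingly, in the converse direction of your Step 2 the decreasing orbit converges to $\tilde y=\tilde x-1$, which also projects to $x$, so no contradiction with $f^n(p)\to x$ arises: convergence in $\mathbb{S}^1$ does not force the lifts to tend to $\tilde x$ once $\tilde y$ represents $x$ itself. This does not affect the paper, because with $y=z=x$ the hypothesis $y<x<z$ is not satisfiable under the paper's cyclic-order convention, and the lemma is only invoked for maps (namely $f^q$) with at least four fixed points; but you should either exclude that case explicitly or drop the claim that it is unproblematic.
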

\begin{lemma}
  \label{l:crossing fixed points}
  Let $f \in \hp(\mathbb{S}^1)$ be a homeomorphism with finitely many fixed points, all 
  of whom are crossing. Then every fixed point of $f$ is either repulsive or 
  attractive, and the two types are alternating. 
\end{lemma}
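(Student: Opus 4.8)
The plan is to reduce the statement to the behaviour of the sign of $F - \id$ on the complementary arcs of $\fix(f)$, where $F$ is a suitably normalised lift of $f$, and then to read off attractivity and repulsivity directly from Lemma~\ref{l:attractive, repulsive fixed points}. First I would set up notation. Since $f$ has a fixed point, $\tau(f) \equiv 0 \pmod 1$, so there is a lift $F \in \hp(\R)$ with $F(\tilde x) = \tilde x$ for some representative $\tilde x$ of a fixed point of $f$; then $\fix(F) \subset \R$ is a discrete set, invariant under $x \mapsto x + 1$, whose projection to $\mathbb{S}^1$ is $\fix(f)$. Enumerate the fixed points cyclically as $x_1 < x_2 < \dots < x_n$ with $n = |\fix(f)|$, and pick lifts $\tilde x_1 < \tilde x_2 < \dots < \tilde x_n < \tilde x_1 + 1$. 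On each open interval $(\tilde x_j, \tilde x_{j+1})$ the continuous function $F - \id$ has no zero, hence a constant sign $\sigma_j \in \{+, -\}$. Equivalently, $f$ maps the arc $(x_j, x_{j+1})$ onto itself and either moves every point towards $x_{j+1}$ (the case $\sigma_j = +$) or moves every point towards $x_j$ (the case $\sigma_j = -$).

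Next I would translate the crossing hypothesis. By definition, $x_j$ is a crossing fixed point exactly when every neighbourhood of $\tilde x_j$ contains points where $F - \id < 0$ and points where $F - \id > 0$; since $F - \id$ has constant sign $\sigma_{j-1}$ immediately to the left of $\tilde x_j$ and constant sign $\sigma_j$ immediately to the right, this is equivalent to $\sigma_{j-1} \neq \sigma_j$. Thus the assumption that all fixed points are crossing says precisely that the cyclic sequence $\sigma_1, \sigma_2, \dots, \sigma_n$ is alternating.

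Finally I would apply Lemma~\ref{l:attractive, repulsive fixed points} with $y = x_{j-1}$ and $z = x_{j+1}$. Its criterion says that $x_j$ is attractive iff $f(p) \in (p, x_j)$ for all $p \in (x_{j-1}, x_j)$ and $f(p) \in (x_j, p)$ for all $p \in (x_j, x_{j+1})$, which in the notation above is exactly $\sigma_{j-1} = +$ and $\sigma_j = -$; similarly $x_j$ is repulsive iff $\sigma_{j-1} = -$ and $\sigma_j = +$. Since the crossing hypothesis forces $\sigma_{j-1} \neq \sigma_j$, each fixed point falls into one of these two cases, and since the $\sigma_j$ alternate around the circle, $x_j$ and $x_{j+1}$ always receive opposite types; hence attractive and repulsive fixed points alternate along $\mathbb{S}^1$.

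I expect no serious difficulty here — the lemma is genuinely routine — and the only care needed is the orientation bookkeeping: choosing the lift so that it has honest fixed points, matching the cyclic-order intervals $(x, y)$ of $\mathbb{S}^1$ with the genuine intervals of $\R$, and checking that the lift-based crossing condition corresponds to an actual sign change of $F - \id$. It is worth noting in passing that the cyclic alternation of the $\sigma_j$ forces $n$ to be even, so in particular $n \geq 2$ and the degenerate case of a single fixed point cannot arise under the hypothesis (this is also the source of the parity claim in Theorem~\ref{t:haar positive conj classes in homeo s1}).
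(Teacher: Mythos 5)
Your argument is correct and is the natural one. Note that the paper does not actually supply a proof here — it states that the proofs of this lemma and the preceding one are "standard and left to the reader" — so there is no published argument to compare against, but what you wrote is exactly the standard route: normalise the lift so that it has genuine fixed points, record the sign $\sigma_j$ of $F - \id$ on each complementary interval, observe that "crossing at $x_j$" is precisely $\sigma_{j-1}\neq\sigma_j$, and then read attractivity $(\sigma_{j-1},\sigma_j)=(+,-)$ and repulsivity $(\sigma_{j-1},\sigma_j)=(-,+)$ off Lemma~\ref{l:attractive, repulsive fixed points}. The sign pattern around a cycle forces alternation of the two types, and your side remark that this forces $|\fix(f)|$ to be even is correct and indeed is the source of the parity claim in Theorem~\ref{t:haar positive conj classes in homeo s1}. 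The only minor thing worth adding for completeness is a sentence justifying that $F-\id$ really does have constant sign on each open arc between consecutive lifted fixed points (intermediate value theorem), and that the two sign patterns $(+,-)$ and $(-,+)$ are exhaustive under the crossing hypothesis, which you do implicitly; neither is a gap.
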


Now we describe the conjugacy classes of $\hp(\mathbb{S}^1)$ with rational rotation 
number using results from \cite{GFS}. Suppose that $f \in \hp(\mathbb{S}^1)$ has a fixed 
point. Then for some $x \in \R$ and $k \in \Z$, $F(x) = x + k$, where $F$ is a 
lift of $f$. The \emph{signature} of $f$ is then defined as $\Delta_f(x) = 
\sign(F(x) - x - k)$. Note that value $F(x) - x - k$ only 
depends on the fractional part of $x$, hence it is legitimate to interpret 
$\Delta_f$ as a function from $\mathbb{S}^1$ to $\{-1, 0, 1\}$, which we will do. Also 
notice that if $F(x') = x' + k'$ for another $x' \in \R$ and $k' \in 
\Z$ then $k = k'$, hence $\Delta_f$ is well-defined. Now we state the facts we 
need about conjugation.

\begin{lemma}[\cite{GFS}]
  \label{l:conjugacy in S1}
  If the two homeomorphisms $f, g\in \hp(\mathbb{S}^1)$ have fixed points then they are 
  conjugate if and only if there exists a homeomorphism $h \in \hp(\mathbb{S}^1)$ such 
  that $\Delta_g = \Delta_f \circ h$. If $\tau(f) = \tau(g) = \frac{p}{q}$ with 
  $p$ and $q$ relatively prime and $0 \le p < q$ then $f$ and $g$ are conjugate 
  if and only if $f^q$ and $g^q$ are conjugate. 
\end{lemma}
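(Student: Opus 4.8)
The plan is to prove the two assertions in turn, reducing to the interval Lemma \ref{l:conjugacy in [0,1]} wherever possible.

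\emph{The signature criterion.} The ``only if'' direction is a lift computation: if $g = \varphi f \varphi^{-1}$, pick a lift $\Phi$ of $\varphi$ and a lift $F$ of $f$ with $F(x_0) = x_0 + k$ at some $x_0$ covering a fixed point of $f$; then $G := \Phi F \Phi^{-1}$ is a lift of $g$ with $G(\Phi(x_0)) = \Phi(x_0) + k$, and since $\Phi$ is increasing and commutes with integer translation, $\sign\big(G(\Phi(x)) - \Phi(x) - k\big) = \sign\big(\Phi(F(x)) - \Phi(x + k)\big) = \sign\big(F(x) - x - k\big)$, that is, $\Delta_g \circ \varphi = \Delta_f$, so $h := \varphi^{-1}$ works. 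For ``if'', observe first that $\Delta_g = \Delta_f \circ h$ forces $h^{-1}(x_0) \in \fix(g)$ whenever $x_0 \in \fix(f)$. Cutting the range circle at such an $x_0$ and the domain circle at $h^{-1}(x_0)$ turns $f$, $g$, $h$ into maps $\tilde f, \tilde g, \tilde h \in \hp([0,1])$; with this choice of cut points one has $\tilde f(t) - t = F(x') - x' - k$ under the change of coordinates, and similarly for $g$, so the hypothesis $\Delta_g = \Delta_f \circ h$ becomes exactly the displacement-matching condition of Lemma \ref{l:conjugacy in [0,1]} for $\tilde f$ and $\tilde g$. That lemma gives $\tilde f \sim \tilde g$ in $\hp([0,1])$, and since any conjugacy in $\hp([0,1])$ fixes both endpoints, it glues back to a circle homeomorphism conjugating $f$ to $g$.

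\emph{The power criterion.} ``$\Rightarrow$'' is immediate since $g = \varphi f \varphi^{-1}$ gives $g^q = \varphi f^q \varphi^{-1}$. For ``$\Leftarrow$'' (where we may assume $q \ge 2$), let $\psi$ witness $f^q \sim g^q$; replacing $f$ by $\psi f \psi^{-1}$ we may assume $f^q = g^q =: T$, and it suffices to conjugate this new $f$ to $g$. Any such conjugacy must commute with $T$, so we build one. Put $K = \fix(T)$. Since $\tau(f) = \tau(g) = p/q$, every periodic point of $f$ and of $g$ has period exactly $q$, so both $f$ and $g$ act on $K$ freely with all orbits of size $q$, each orbit combinatorially a rotation by $p$; and since an orientation-preserving homeomorphism cannot carry an open arc onto itself without fixing its endpoints, both $f$ and $g$ permute the complementary arcs of $K$ in cycles of length exactly $q$. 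Now $\varphi$ is built in two stages. Stage (a): produce an orientation-preserving $\varphi|_K : K \to K$ with $\varphi|_K \circ f|_K = g|_K \circ \varphi|_K$ whose induced bijection on complementary arcs carries each $f$-cycle to a $g$-cycle on which $T$ has conjugate dynamics. Stage (b): extend $\varphi$ over the complementary arcs one $f$-cycle at a time — on a single arc $B$ of the cycle define $\varphi|_{\bar B}$ using Lemma \ref{l:conjugacy in [0,1]}, then spread it around the cycle via $\varphi f = g \varphi$. The consistency condition that closes up the cycle is precisely that $\varphi|_{\bar B}$ conjugate $T|_{\bar B}$ to $T$ on the target arc, which is solvable by stage (a) because on a single arc $\varphi$ is pinned down only up to precomposition with an element of the centraliser of $T|_{\bar B}$, and this residual freedom absorbs the remaining discrepancy (the mechanism is already visible in the model case $q=2$, where $f$ is determined by a single ``half-map'' between the two arcs). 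Gluing the pieces yields an orientation-preserving $\varphi \in \hp(\mathbb{S}^1)$ with $\varphi f \varphi^{-1} = g$.

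\emph{The main obstacle} is stage (a): two free, orientation-preserving $\Z/q$-actions on the same compact set $K = \fix(f^q)$, with all orbits of size $q$ and each combinatorially a rotation by $p$, must be shown conjugate by an orientation-preserving self-homeomorphism of $K$ that is moreover compatible with the dynamics of $T$ on the complementary arcs. For finite $K$ this is elementary: the cyclic word of signs of $\Delta_{f^q}$ along the periodic orbits must coincide with that of $\Delta_{g^q}$, which is exactly what $f^q \sim g^q$ furnishes via the first assertion; the compatibility with $T$-on-arcs follows because the number of $f$-cycles of arcs with a given $T$-dynamics is $1/q$ times an intrinsic invariant of $(K,T)$, hence equals the corresponding count for $g$. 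In general one argues that such an action extends to a finite-order (hence, by invariance of the rotation number, rotation-conjugate) homeomorphism of $\mathbb{S}^1$, and then matches the two resulting $R_{p/q}$-invariant closed sets together with their boundary data, using that the relevant covers are trivial over the zero- and one-dimensional pieces of $K$. With stage (a) in hand, the remainder is routine manipulation with Lemma \ref{l:conjugacy in [0,1]} and equivariant extension.
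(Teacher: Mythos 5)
The paper does not prove this lemma; it is cited to [GFS], so there is no internal argument to compare against. I therefore assess your proposal on its own.

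Your proof of the first assertion (the signature criterion) is correct: the lift computation for ``only if'' is right, and cutting the circle at $x_0 \in \fix(f)$ (for $f$) and $h^{-1}(x_0) \in \fix(g)$ (for $g$) to invoke Lemma \ref{l:conjugacy in [0,1]} is a sound reduction. The ``$\Rightarrow$'' of the power criterion is trivially correct. The genuine gap is in ``$\Leftarrow$'', in the step you yourself flag as the main obstacle: stage~(a) is asserted but not proved in general. For finite $K = \fix(f^q)$ — which is the only case the paper uses the lemma for, namely in Claim \ref{c:f conj g iff number of periodic points is the same} where the homeomorphisms have finitely many periodic points — the obstacle actually vanishes, and more simply than you indicate: after normalising $f^q = g^q =: T$, the restrictions $f|_K$ and $g|_K$ must coincide outright, since an orientation-preserving, free, order-$q$ permutation of a cyclically ordered set of size $N = |K|$ with rotation number $p/q$ is unique (it is $k_i \mapsto k_{i + pN/q}$). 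One may then take $\varphi|_K = \id$ and on each arc cycle set $\varphi|_{f^i(B)} = g^i|_B \circ (f^i|_B)^{-1}$; the closing-up condition reduces to $\varphi|_B = T|_B \circ \varphi|_B \circ (T|_B)^{-1}$, which the identity satisfies, and no appeal to sign words or counts is needed. For infinite $K$, however, $f|_K$ and $g|_K$ need not agree after normalisation, and your sketch — extend to a finite-order circle homeomorphism, then ``match the two resulting $R_{p/q}$-invariant closed sets together with their boundary data'' — is not a proof. After conjugating the two finite-order extensions to $R_{p/q}$ you obtain two $R_{p/q}$-invariant copies of $K$ sitting in different positions in $\mathbb{S}^1$, and you would need an $R_{p/q}$-equivariant homeomorphism carrying one to the other that in addition respects the sign of $T$ on each complementary arc; neither the existence of such a map nor its compatibility with the gap data is argued, and the phrase about ``covers being trivial over the zero- and one-dimensional pieces of $K$'' does not identify a mechanism. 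This step needs to be filled in, or the lemma restricted to finite $\fix(f^q)$ (which would suffice for every use in the paper).
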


Now we prove a proposition about the random homeomorphism. 

\begin{proposition}
  \label{p:co-Haar null hom(s1) has finitely many crossing per points}
  The set of $f \in \hp(\mathbb{S}^1)$ with infinitely many periodic points and the set of $f \in \hp(\mathbb{S}^1)$ with finitely many periodic points at least one of which is non-crossing are both Haar null. 
\end{proposition}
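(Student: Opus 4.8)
The plan is to reduce both statements to the interval case, Theorem~\ref{t:homeo [0,1] conjugacy classes}, by transferring the relevant measure-theoretic obstruction from the arc-picture of $\hp(\mathbb{S}^1)$ to $\hp([0,1])$. First I would handle the set $\mathcal{P}_\infty$ of $f\in\hp(\mathbb{S}^1)$ with infinitely many periodic points. Recall that $f$ has periodic points iff $\tau(f)\in\Q$, and if $\tau(f)=p/q$ then all periodic points have period $q$ and the periodic points of $f$ are exactly the fixed points of $f^q$. A natural witness measure is the pushforward of normalized Lebesgue measure on a parameter interval via a map $a\mapsto \rho_a$ into $\hp(\mathbb{S}^1)$ built from the linear ``circle analogues'' of the functions $f_a$ from \eqref{e:linear witness}; concretely one can take $\rho_a$ to be a fixed lift-type perturbation of the identity (e.g.\ the circle homeomorphism whose lift is piecewise linear and sends $0\mapsto 0$, $1/2\mapsto a$ as before, composed with a rotation so that rotation numbers vary, or more simply one keeps $\rho_a$ with a single crossing fixed point region). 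Given any $g\in\hp(\mathbb{S}^1)$ and any lift, I would show $\mu(g\mathcal{P}_\infty)=0$ by the same bounded-variation argument: on each arc avoiding the relevant fixed points one rescales the graph of a lift of $g$ by a bilipschitz map $F$ as in the proof of Claim~\ref{c:no limit of fixed points in (0,1)}, concludes the rescaled function $g^*$ has bounded variation, and then invokes Banach's theorem (\cite[Corollary 1]{B}) to see that $(g^*)^{-1}(y)$ is finite for a.e.\ $y$, so for a.e.\ $a$ the graph of $\rho_a$ meets the graph of the corresponding lift of $g$ only finitely often on that arc; covering $\mathbb{S}^1$ by finitely many such arcs gives $\mu(g\mathcal{P}_\infty)=0$. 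Since $\mathcal{P}_\infty$ is conjugacy invariant and Borel (an $F_\sigma$-type computation exactly like the one for $\mathcal{L}^c$, stratifying by period $q$, number of periodic points, and separation), Lemma~\ref{l:haar null conjugacy} finishes this part.

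Next I would treat the set $\mathcal{N}$ of $f$ with finitely many (but at least one) periodic points at least one of which is non-crossing. By Lemma~\ref{l:crossing fixed points} and the classification, a periodic point $x_0$ of period $q$ is non-crossing precisely when, for the lift $F$ of $f^q$ with $F(x_0')=x_0'+k$, the function $F(x)-x-k$ has a local extremum at $x_0'$. So $\mathcal{N}$ is the circle analogue of $(\mathcal{H})^c$ intersected with the ``finitely many periodic points'' set. For Borelness I would stratify $\mathcal{N}$ by the period $q$ and write it, on the piece where $\tau(f)=p/q$, in terms of the fixed points of $f^q$ and a closed condition on local extrema exactly mirroring the sets $\mathcal{H}_m$. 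For the measure, I would again use the pushforward measure $\mu$ above and show that for every $g\in\hp(\mathbb{S}^1)$ the set $\{a:\rho_a\in g\mathcal{N}\}$, i.e.\ $\{a:g^{-1}\rho_a\in\mathcal{N}\}$, is countable. This is the direct transcription of the argument in Claim~\ref{c:no local extremum fixed point is co-Haar null}: if $(g^{-1}\rho_a)^q$ has a lift with a local maximum of $F(x)-x-k$ at some periodic point, assign to $a$ a rational arc $(c_a,d_a)$ on which the corresponding difference function is $\le 0$ at the given $k$; strict monotonicity of (a lift of) $g^{-1}$ together with the strict ordering $\rho_a<\rho_{a'}$ of the linear witnesses forbids assigning the same arc to two parameters, so only countably many $a$ are excluded; the local-minimum case is symmetric. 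Hence $\mu(g\mathcal{N})=1$ for all $g$, and $\mathcal{N}$ is conjugacy invariant by Lemma~\ref{l:conjugacy in S1}, so Lemma~\ref{l:haar null conjugacy} applies.

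The main obstacle I anticipate is the bookkeeping around lifts and rotation numbers: unlike the interval case, the linear witnesses $\rho_a$ will generally not fix a common point, and ``periodic point of $f$'' unwinds to ``fixed point of $f^q$ shifted by an integer $k$'', so I must take care that the rescaling map $F$ and the bounded-variation/Banach argument are applied to the correct branch of a lift of $g^q$ (or of $(g^{-1}\rho_a)^q$) on each arc, and that the countability argument for non-crossing periodic points tracks the correct integer $k$ uniformly in $a$. One clean way to sidestep much of this is to first reduce to a fixed rotation number by conjugacy invariance and then, on the set $\{\tau(f)=p/q\}$, pass to the $q$-th power and work with fixed points directly, at which point the arguments become almost verbatim copies of those in Section~\ref{s:homeo [0, 1]}; the price is checking that ``$f\mapsto f^q$'' and the restriction to a rotation-number level set interact well with the witness measure, which again is routine given continuity of $\tau$ and of the group operations. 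Once both parts are established, the proposition follows since $\mathcal{P}_\infty\cup\mathcal{N}$ is a union of two Haar null sets, hence Haar null.
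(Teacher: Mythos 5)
There is a genuine gap in the core technical step, and you have underestimated it by calling it ``bookkeeping.'' The paper's proof does not use a family of piecewise-linear witnesses $\rho_a$ analogous to $f_a$; it uses the rotations $R_\alpha$ themselves, and this choice is not cosmetic but essential. The interval rescaling trick (a bilipschitz map applied to the graph of $g$, so that $g^*$ has bounded variation and Banach's theorem applies) works because ``$x$ is a fixed point of $g^{-1}f_a$'' is exactly ``the graph of $g$ meets the graph of $f_a$.'' On the circle, for rotation number $p/q$, you must control fixed points of $((R_\alpha f)^q$, and $(R_\alpha f)^q$ depends on $\alpha$ in a nonlinear, iterated way that no bilipschitz rescaling of a single graph captures. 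The paper gets around this by considering the implicitly defined functions $\psi_k$ solving $(\overline{R}_{\psi_k(x)}F)^n(x)-x=k$: because $(\overline{R}_\beta F)^n(z)-(\overline{R}_\alpha F)^n(z)\ge \beta-\alpha$ for $\beta>\alpha$ (a fact that exploits the additive structure of rotations, i.e.\ $\overline{R}_\alpha F=F+\alpha$ pointwise), the map $\alpha\mapsto(\overline{R}_\alpha F)^n(x)-x$ is strictly increasing, $\psi_k$ is well defined, and the same inequality yields the one-sided Lipschitz bound $\psi_k(y)-\psi_k(x)\le y-x$, from which bounded variation of $\psi_k$ follows and Banach's theorem can be applied. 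Your $\rho_a$ family (even when you ``compose with a rotation so that rotation numbers vary'') has no such monotonicity-in-$a$ bound for the $q$-th power, so the bounded-variation argument you appeal to is not available; the same problem defeats the rational-arc countability argument you propose for the non-crossing points, which in the paper is instead handled by observing that the non-crossing periodic points become strict local extrema of the function $\psi_k$, whose set of strict local extrema is countable.

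Your fallback (``reduce to a fixed rotation number by conjugacy invariance, pass to the $q$-th power, then the arguments become verbatim copies of the interval case'') also does not close the gap: the witness measure must work uniformly against all two-sided translates, and the translates do not preserve the rotation-number level set, so there is no way to ``work inside $\{\tau(f)=p/q\}$'' throughout. Moreover, even with one-sided translates (justified by Lemma~\ref{l:haar null conjugacy}), the fixed points of $(g\rho_a)^q$ cannot be read off as intersections of two fixed graphs, which is the hypothesis the interval-style $F$-rescaling needs. The correct move is to notice that rotations are the right witness family precisely because of their compatibility with iteration, and to work with the bounded-variation functions $\psi_k$ directly.
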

\begin{proof}
  Since all the periodic points have the same period, if $f$ has infinitely 
  many periodic points then for some $n$, $f^n$ has infinitely many fixed 
  points. Hence, it is enough to prove that for each $n$, for co-Haar null 
  many $f \in \hp(\mathbb{S}^1)$, $f^n$ only has finitely many fixed points and each of 
  them are crossing. Let $n$ be a fixed positive integer and let $\mathcal{F}_n 
  = \{f \in \hp(\mathbb{S}^1) : f^n$ has finitely many fixed points and all of them are 
  crossing$\}$. It is enough to show that $\mathcal{F}_n$ is co-Haar null. 
  
  As the number of fixed points is the same for conjugate homeomorphisms, and 
  $(h^{-1}fh)^n = h^{-1}f^n h$, the number of periodic points with period $n$ 
  is also the same for conjugate automorphisms. The fact that a homeomorphism 
  has a non-crossing periodic point also holds simultaneously for conjugate 
  homeomorphisms. It follows that $\mathcal{F}_n$ is conjugacy invariant. 
  
  We now claim that $\mathcal{F}_n$ is a Borel set. Let $\mathcal{F} = \{f \in 
  \hp(\mathbb{S}^1) :$ $f$ has finitely many fixed points and all of them are 
  crossing$\}$. It is enough to show that $\mathcal{F}$ is a Borel set, since 
  $\mathcal{F}_n$ is the inverse image of $\mathcal{F}$ under the continuous 
  map $f \mapsto f^n$. 
  
  Let 
  \begin{equation*}
    \mathcal{F}_{k, l} = \{ f \in \hp(\mathbb{S}^1) :  \exists x_1, 
      \dots x_l \in \mathbb{S}^1 \cap \fix(f) 
      \left(i \neq j \Rightarrow |x_i - x_j| \ge 1/k\right) \},
  \end{equation*}
  where the distance $|x - y|$ of two elements $x, y \in \mathbb{S}^1$ are interpreted 
  as $\min(y' - x', x' + 1 - y')$, where $x'$, $y' \in \R$ with $x \equiv x' 
  \pmod{1}$, $y \equiv y' \pmod{1}$ and $x' \le y' < x' + 1$. Also let
  \begin{equation*}
  \begin{split}
  \mathcal{C}_m = \Big\{&f \in \hp(\mathbb{S}^1) : \text{for a lift $F$ of $f$ there 
  exists $x \in [0, 1]$  and $k \in \Z$} \\ &\text{with $F(x) = x + k$ and 
    either $F(y) \le y + k$ for all $y \in \Big(x - \frac{1}{m}, x + 
    \frac{1}{m}\Big)$} \\ 
  &\text{or $F(y) \ge y + k$ 
    for all $y \in \Big(x - \frac{1}{m}, x + \frac{1}{m}\Big)$}\Big\}.
  \end{split}
  \end{equation*}
  Clearly $f \not \in \mathcal{F}$ if and only if $f$ has infinitely many fixed 
  points or it has a non-crossing fixed point, thus
  $$
    \mathcal{F}^c = \bigcap_{l \in \N} \bigcup_{k \in \N} \mathcal{F}_{k, l} 
      \cup \bigcup_{m \in \N} \mathcal{C}_m.
  $$
  Hence, in order to show that $\mathcal{F}$ is Borel, it is enough to prove 
  that $\mathcal{F}_{k, l}$ and $\mathcal{C}_m$ are Borel sets for every $k, l, 
  m \in \N$. 
  
  First we show that $\mathcal{F}_{k,l}$ is closed for every $k, l \in \N$. Let 
  $f_n \in \mathcal{F}_{k, l}$ be a sequence converging uniformly to $f$, 
  and $x^n_i \in \mathbb{S}^1 \cap \fix(f_n)$ for $i = 1, \dots, l$ 
  satisfy $|x^n_i - x^n_j| \ge 1/k$ for $i \neq j$. Then, for an 
  appropriate subsequence, the numbers $x^n_i$ converge to some $x_i$ for all 
  $i$. It is easy to see that $x_i \in \fix(f)$ and $|x_i - x_j| \ge 1/k$ for 
  $i \neq j$. Hence $f \in \mathcal{F}_{k, l}$, showing that 
  $\mathcal{F}_{k, l}$ is closed, thus Borel. 
  
  Now we prove that each $\mathcal{C}_m$ is Borel by showing that it is also 
  closed. Let $(f_k) \subset \mathcal{C}_m$ be a uniformly convergent sequence with 
  $f_k \to f$. For each $k$, let $x_k \in [0, 1]$ and $F_k$ a suitable lift of 
  $f_k$ with $F_k(x_k) = x_k$ and either $F_k(y) \le y$ for every $y \in (x_k - 
  1/m, x_k + 1/m)$, or $F_k(y) \ge y$ for every $y \in (x_k - 1/m, x_k + 1/m)$. 
  Then there exists a subsequence $k_l$ such that $x_{k_l}$ converges to some 
  $x \in [0, 1]$ and we also suppose that for each $l$, $F_{k_l}(y) \le y$ for 
  every $y \in (x_{k_l} - 1/m, x_{k_l} + 1/m)$. The case where for each $l$, 
  $F_{k_l}(y) \ge y$ for every $y \in (x_{k_l} - 1/m, x_{k_l} + 1/m)$, is 
  analogous. Now $x$ is clearly a fixed point of $f$. Hence, for a suitable 
  lift $F$ of $f$, $F(x) = x$, and it is also easy to see that for every $y \in 
  (x - 1/m, x + 1/m)$, $F(y) \le y$, thus $f \in \mathcal{C}_m$. This shows 
  that $\mathcal{C}_m$, and thus $\mathcal{F}_n$ is a Borel set. 
  
  Now we prove that $\mathcal{F}_n$ is co-Haar null by constructing a Borel 
  probability measure $\mu$ on $\hp(\mathbb{S}^1)$ such that for every $f \in \hp(\mathbb{S}^1)$, 
  $\mu(\mathcal{F}_nf^{-1}) = 1$. Then Lemma \ref{l:haar null conjugacy} 
  implies that $\mathcal{F}_n$ is co-Haar null, since $\mathcal{F}_n$ is a 
  conjugacy invariant Borel set. 
  
  Let 
  \begin{equation}
    \label{e:mu def for S1}
    \mu(B) = \lambda(\{\alpha \in [0, 1] : R_\alpha \in B\}),
  \end{equation}
  where $\lambda$ is the Lebesgue measure. It is easy to see that $\mu$ is a 
  Borel probability measure. Let $f \in \hp(\mathbb{S}^1)$ be arbitrary, we need to show 
  that $\mu(\mathcal{F}_nf^{-1}) = 1$, or equivalently, $\lambda(\{\alpha \in 
  [0, 1] : R_\alpha f \in \mathcal{F}_n\}) = 1$.
  
  Let $F$ be the lift of $f$ with $F(0) \in [0, 1)$ and $H = \{(x, \alpha) \in 
  \R \times \R : (\overline{R}_\alpha F)^n(x) - x \in \Z\}$. First of all, it 
  is easy to see that $H$ is closed, since the map $\phi(x, \alpha) = 
  (\overline{R}_\alpha F)^n(x) - x$ is continuous, and $H = 
  \phi^{-1}(\Z)$. Let $k \in \Z$ be fixed. The map $\alpha \mapsto 
  (\overline{R}_\alpha F)^n(x) - x$ is strictly increasing and 
  continuous, and has limits $-\infty$ at $-\infty$ and $\infty$ at $\infty$. 
  Hence, $\phi^{-1}(k)$ is the graph of the function assigning to $x \in \R$ 
  the unique $\alpha \in \R$ with $(\overline{R}_\alpha F)^n(x) - x 
  = k$. Let $\psi_k : \R \to \R$ denote the corresponding function. It is easy 
  to see that $\phi(x + 1, \alpha) = \phi(x, \alpha)$, hence $\psi_k$ is 
  periodic for each $k$ with period 1. Since $F(0) \in [0, 1)$, clearly 
  $F(x) \in [x - 1, x + 1)$ for every $x \in \R$. It follows that 
  for $x \in [0, 1)$, $(\overline{R}_\alpha F)^n(x) - x \in [n 
  \alpha - n - 1, n\alpha + n + 1]$. Hence, $k \in [n \psi_k(x) - n - 1, 
  n\psi_k(x) + n + 1]$, meaning that 
  \begin{equation}
    \label{e:psi(x) is in some interval}
    \psi_k(x) \in [k/n - 1 - 1/n, k/n + 1 + 1/n] \text{ for $x \in [0, 1)$}
  \end{equation}
  and actually for every $x \in \R$ using the periodicity of $\psi_k$. 
  
  Thus $\psi_k$ is bounded and its graph is closed, hence it is continuous. It 
  also satisfies
  \begin{equation}
    \label{e:psi(y) - psi(x) <= valami}
    \psi_k(y) - \psi_k(x) \le y - x \text{ if $x < y$,} 
  \end{equation} 
  since either $\psi_k(y) \le \psi_k(x)$, and it is automatic, or $\psi_k(y) > 
  \psi_k(x)$, in which case it follows from 
  \begin{equation*}
  \begin{split}
    k &=  (\overline{R}_{\psi_k(x)} F)^n(x) - x \le 
    (\overline{R}_{\psi_k(x)} F)^n(y) - y + y - x \\ &\le 
    \psi_k(x) - \psi_k(y) + (\overline{R}_{\psi_k(y)} F)^n(y) 
    - y + y - x \\
    &= \psi_k(x) - \psi_k(y) + k + y - x.
  \end{split}
  \end{equation*}
  Then \eqref{e:psi(y) - psi(x) <= valami} implies that $\psi_k|_{[0, 1)}$ is 
  of bounded variation, since $\psi_k(0) = \psi_k(1)$, hence in any subdivision 
  $0 = a_0 < a_1 < \dots < a_l = 1$, the sums 
  $$
    \sum_{\begin{subarray}c i \\ \psi_k(a_i) < \psi_k(a_{i + 1})\end{subarray}} 
    \psi_k(a_{i + 1}) - \psi_k(a_{i}) \text{ and } 
    \sum_{\begin{subarray}c i \\ \psi_k(a_i) > \psi_k(a_{i + 1})\end{subarray}} 
    \psi_k(a_{i + 1}) - \psi_k(a_{i})
  $$ 
  have the same absolute value, and the former is at most $1$. Using the 
  fact that ${\psi_k|_{[0, 1)}}$ is of bounded variation, a result of  Banach 
  \cite{B} then implies that 
  \begin{equation}
    \label{e:psi_k^{-1}(a) finite for ae a}
    \text{for almost all $\alpha \in [0, 1]$, ${\psi_k|_{[0, 1)}}^{-1}(\alpha)$ 
    is finite for every $k \in \Z$.}
  \end{equation}
  
  To finish the proof of the proposition, we need to show that 
  $\lambda(\{\alpha \in [0, 1] : R_\alpha f \not \in \mathcal{F}_n\}) = 0$. Now 
  we have $\{\alpha \in [0, 1] : R_\alpha f \not \in \mathcal{F}_n\} = \{\alpha 
  \in [0, 1] : ({R}_\alpha {f})^n(x) = x$ for infinitely many $x \in \mathbb{S}^1\} \cup 
  \{\alpha \in [0, 1] : (R_\alpha f)^n$ has finitely many fixed points and at 
  least one of them is non-crossing$\}$. Now we show that both sets are of 
  measure zero, starting with the former. 
  
  It is easy to see that $\{\alpha \in [0, 1] : ({R}_\alpha {f})^n(x) = x$ for 
  infinitely many $x \in \mathbb{S}^1\} = \{\alpha \in [0, 1] : (\overline{R}_\alpha 
  {F})^n(x) - x \in \Z$ for infinitely many $x \in [0, 1)\} = \{\alpha \in [0, 
  1] : \bigcup_k {\psi_k|_{[0, 1)}}^{-1}(\alpha)$ is infinite$\}$. Using 
  \eqref{e:psi(x) is in some interval}, $\psi_k$ has values in $[0, 1]$ for 
  only finitely many $k$, hence the union can be changed to a finite union, 
  meaning that the set above equals $\{\alpha \in [0, 1] : {\psi_k|_{[0, 
  1)}}^{-1}(\alpha)$ is infinite for some $k\}$. And this is clearly of measure 
  zero using \eqref{e:psi_k^{-1}(a) finite for ae a}.
  
  Now we turn to the latter set. Suppose that for some $\alpha \in [0, 1]$, 
  $(R_\alpha f)^n \in \hp(\mathbb{S}^1)$ has finitely many fixed points and one of them 
  is non-crossing. Let $x \in [0, 1]$ represent the non-crossing fixed point, 
  then $(\overline{R}_\alpha F)^n(x) = x + k$ for some $k \in \Z$. 
  Since $(R_\alpha f)^n$ only has finitely many fixed points and $x$ is 
  non-crossing, there is a neighbourhood $(x - \delta, x + \delta)$ of $x$ such 
  that either $(\overline{R}_\alpha F)^n(y) > y + k$ for every $y 
  \in (x - \delta, x + \delta) \setminus \{x\}$ or $(\overline{R}_\alpha 
  F)^n(y) < y + k$ for every $y \in (x - \delta, x + \delta) 
  \setminus \{x\}$. In the first case $x$ is a strict local maximum point of 
  $\psi_k$, in the second case $x$ is a strict local minimum point of $\psi_k$. 
  Hence, we get that $\{\alpha \in [0, 1] : (R_\alpha f)^n$ has finitely many 
  fixed point and at least one of them is non-crossing$\} \subset \{\alpha \in 
  [0, 1] : \alpha$ is a strict local minimum or maximum of $\psi_k$ for some $k 
  \in \Z\}$. And the latter is well known to be countable (see 
  e.g.~\cite[Theorem 7.2]{ST}), thus it is of 
  measure zero. This completes the proof of the proposition. 
\end{proof}

Now we are ready to prove the main theorem of this section. 

\begin{theorem}\label{t:haar positive conj classes in homeo s1} 
  The conjugacy class of $f \in \hp(\mathbb{S}^1)$ is non-Haar null if and only if 
  $\tau(f) \in \Q$, $f$ has finitely many periodic points and all of them are 
  crossing. Every such conjugacy class necessarily contains an even number of 
  periodic orbits, and for every rational number $0 \le r < 1$ and 
  positive integer $k$ there is a unique non-Haar null conjugacy class with 
  rotation number $r$ containing $2k$ periodic orbits. Moreover, the union of 
  the non-Haar null conjugacy classes and $\{f \in \hp(\mathbb{S}^1) : \tau(f) \not \in 
  \Q\}$ is co-Haar null. 
\end{theorem}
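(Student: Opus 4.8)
The plan is to establish, in order, the ``only if'' direction, the structural statements (even number of orbits, uniqueness, existence), the ``if'' direction, and finally the ``moreover'' part, using Proposition \ref{p:co-Haar null hom(s1) has finitely many crossing per points} as the main input and, for the hard direction, mimicking the strategy behind Theorem \ref{t:homeo [0,1] conjugacy classes}.

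\emph{The ``only if'' direction.} Suppose the conjugacy class $\mathcal{C}$ of $f$ is non-Haar null. The properties ``$f$ has infinitely many periodic points'' and ``$f$ has finitely many periodic points, at least one of which is non-crossing'' are conjugacy invariant, so if $f$ had either of them then $\mathcal{C}$ would be contained in one of the two Haar null sets of Proposition \ref{p:co-Haar null hom(s1) has finitely many crossing per points}; hence $f$ has finitely many periodic points, all crossing. It remains to rule out $\tau(f) = \alpha \notin \Q$. For this I would show that the closed, conjugacy invariant set $\{g \in \hp(\mathbb{S}^1) : \tau(g) = \alpha\}$ is Haar null with witness measure $\mu$ from \eqref{e:mu def for S1}: by Lemma \ref{l:haar null conjugacy} it is enough to check $\mu(g\{\tau=\alpha\}) = \lambda(\{\beta \in [0,1] : \tau(g^{-1}R_\beta) = \alpha\}) = 0$ for every $g$. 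Fixing a lift $G$ of $g$, the function $\beta \mapsto \tau(G^{-1}\overline{R}_\beta)$ is continuous, increases by $1$ over each unit interval, and defines a strictly monotone family (if $\beta < \beta'$ then $G^{-1}\overline{R}_\beta(x) < G^{-1}\overline{R}_{\beta'}(x)$ for every $x$, hence the same for all iterates); by the classical fact that the rotation number of a strictly monotone family is strictly increasing at every parameter at which it takes an irrational value (see \cite{KH}), the preimage of $\alpha + \Z$ is a single point and thus Lebesgue null.

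\emph{Structure of a non-Haar null class.} Assume now $\tau(f) = p/q$ with $\gcd(p,q)=1$, $0 \le p < q$ (taking $q=1$ when $\tau(f)=0$), and that $f$ has finitely many periodic points, all crossing; then $f^q$ has finitely many fixed points, all crossing, so by Lemma \ref{l:crossing fixed points} they alternate attractive/repulsive around $\mathbb{S}^1$, in particular their number $N$ is even. Since $f$ conjugates $f^q$ to itself, $f$ permutes these fixed points preserving type, so the $N/2$ attractive ones split into $f$-orbits of size $q$; hence $q \mid N/2$, i.e.\ $N = 2kq$, and $f$ has $2k$ periodic orbits, $k$ attractive and $k$ repulsive. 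For uniqueness: if $g$ satisfies $\tau(g)=p/q$ and $g^q$ has exactly $2kq$ crossing fixed points, these likewise alternate, so $\Delta_{f^q}$ and $\Delta_{g^q}$ (which vanish on the $2kq$ fixed points and are $\pm1$ alternately on the $2kq$ arcs between them, in the same cyclic order) satisfy $\Delta_{g^q} = \Delta_{f^q}\circ h$ for a suitable $h \in \hp(\mathbb{S}^1)$; by Lemma \ref{l:conjugacy in S1} this gives $f^q \sim g^q$, hence $f \sim g$. Thus $\mathcal{C} = \{g : \tau(g) = p/q \text{ and } g^q \text{ has exactly } 2kq \text{ crossing fixed points}\}$, which is Borel by the same combination-of-closed-sets argument as in the proof of Proposition \ref{p:co-Haar null hom(s1) has finitely many crossing per points}; existence of such a class (for each $p/q$ and $k$) is witnessed by the perturbation $F_0(x) = x + p/q + \frac{\varepsilon}{2\pi}\sin(2\pi k q x)$ of $\overline{R}_{p/q}$, for small $\varepsilon > 0$, which lifts a homeomorphism whose $q$-th power has exactly $2kq$ transverse fixed points.

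\emph{The ``if'' direction.} This is the main obstacle. Writing $\mathcal{C}$ as above, the goal is to show that for every Borel probability measure $\mu$, which we may assume has compact support $\mathcal{K}$ with $\mu(\mathcal{K})=1$, there is $g$ with $\mu(g\mathcal{C}) > 0$, i.e.\ $g^{-1}\phi \in \mathcal{C}$ for a positive-$\mu$-measure set of $\phi \in \mathcal{K}$. I would first treat the ``loose'' set $\mathcal{D} = \{g : \tau(g)=p/q \text{ and } g^q(x) - x - p \text{ changes sign at least } 2kq \text{ times}\}$, producing, from a compact $\mathcal{K}$, a homeomorphism $g$ for which $(g^{-1}\phi)^q$ is sandwiched between the $q$-th powers of two maps crossing the line $y = x+p$ alternately $2kq$ times — built by superimposing a ``$kq$-fold zigzag'' in the spirit of Lemmas \ref{l:cikk-cakk} and \ref{l:extension of g} — while the rotation number is pinned to $p/q$ by \emph{mode-locking}: once $g^{-1}\phi$ has a transverse $q$-periodic point realising rotation number $p/q$, an interval of the rotation parameter $\beta$ (with $g$ replaced by $gR_\beta$) keeps it there, and a Fubini argument over $\beta$ converts ``the good event has positive measure for some $\beta$'' into positivity of $\mu(g\mathcal{C})$; the base homeomorphism is chosen so that, on a $\mu$-large set of $\phi$, $g^{-1}\phi$ is not conjugate to a rotation (which would kill the tongue). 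Intersecting with the co-Haar null set of homeomorphisms having only finitely many periodic points (Proposition \ref{p:co-Haar null hom(s1) has finitely many crossing per points}) makes the set of $g$ with $\tau(g)=p/q$ whose $g^q$ has finitely many fixed points, at least $2kq$ of them, all crossing, non-Haar null; this set is $\bigcup_{k'\ge k}\mathcal{C}_{p/q,2k'}$, so some $\mathcal{C}_{p/q,2k'}$ is non-Haar null, and a downward induction — analogous to the final lemma of Section \ref{s:homeo [0, 1]}, but deleting a pair of adjacent attractive/repulsive periodic orbits by an $f$-equivariant modification and tracking the effect on the $q$-th iterate — yields that every $\mathcal{C}_{p/q,2k}$ is non-Haar null. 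The genuinely new difficulty, absent in the interval case, is the simultaneous control of the \emph{exact} rotation number and the \emph{exact} number of crossing periodic orbits uniformly over (a positive-measure portion of) a compact set of homeomorphisms; I expect the rotation-number lock, handled via mode-locking together with an averaging argument, to be where most of the work lies.

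\emph{The ``moreover'' part.} The complement of the union of the non-Haar null conjugacy classes with $\{f : \tau(f) \notin \Q\}$ consists of those $f$ with $\tau(f) \in \Q$ whose conjugacy class is Haar null; by the characterisation just proved, such an $f$ must have infinitely many periodic points or a non-crossing periodic point, so this complement lies in the union of the two Haar null sets of Proposition \ref{p:co-Haar null hom(s1) has finitely many crossing per points} and is therefore Haar null, completing the proof.
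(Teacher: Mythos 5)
Your ``only if'' direction, the evenness/uniqueness analysis and the explicit existence construction are essentially sound (the evenness argument via Lemma \ref{l:crossing fixed points} and the fact that $f$ permutes the attractive fixed points of $f^q$ in orbits of size $q$ is in fact cleaner than the parity argument the paper uses, and the sine-perturbation of $\overline{R}_{p/q}$ does produce exactly $2kq$ crossing fixed points of the $q$-th power, though this needs the small verification that the half-lattice points $j/(2kq)$ are exactly the fixed points and that the sign of $F_0^q(x)-x-p$ is constant between them). The genuine gap is in the ``if'' direction, which you yourself flag as unresolved: you never actually prove that $\bigcup_{k'\ge k}\mathcal{F}_{p/q}^{k'}$ is non-Haar null. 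Your plan — translate a compact support into a ``loose'' set via a zigzag sandwich of $q$-th powers, pin the rotation number by mode-locking, and run a Fubini argument over a rotation parameter — is only a sketch, and it is far harder than what is needed: controlling $(g^{-1}\phi)^q$ uniformly over all $\phi$ in an arbitrary compact set by a choice of a single $g$ is precisely the difficulty that the interval-style zigzag does not address, and you give no construction that achieves it. The paper avoids this entirely with a one-line observation you missed: if $f\in\mathcal{F}_{p/q}^\ell$, then because all fixed points of $f^q$ are crossing, every $g$ in a small neighbourhood of $f$ has $|\fix(g^q)|\ge 2\ell q$, and (shrinking the neighbourhood, using continuity of $\tau$ and the fact that $\fix(g^q)\neq\emptyset$ forces $\tau(g)$ to have denominator dividing $q$) also $\tau(g)=p/q$; since nonempty open sets are never Haar null and the open set differs from $\bigcup_{k'\ge\ell}\mathcal{F}_{p/q}^{k'}$ only inside the Haar null sets of Proposition \ref{p:co-Haar null hom(s1) has finitely many crossing per points}, the union is non-Haar null. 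No translation of compacta, no mode-locking, no Fubini is needed for this step.

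The second half of your ``if'' direction, the downward induction from $\mathcal{F}_{p/q}^{k+1}$ to $\mathcal{F}_{p/q}^{k}$, is also only asserted by analogy, and the phrase ``$f$-equivariant modification'' points at a real problem: in the Haar-null framework the modification must be left multiplication by a single $h$ that works simultaneously for a positive-$\mu$-measure set of homeomorphisms, so it cannot depend on $f$. The paper makes this work by first localizing: it decomposes $\mathcal{F}_{p/q}^{k+1}$ according to fixed rational arcs $(r_i,s_i)$ each containing exactly one periodic point, splits further according to whether the periodic point in $(r_0,s_0)$ is attractive or repulsive (using Lemma \ref{l:attractive, repulsive fixed points} and Lemma \ref{l:crossing fixed points}), chooses one $h$ supported in fixed intervals determined by the $r_i,s_i$ that collapses two adjacent orbits for every $f$ in the chosen piece, and then verifies case by case that $hf$ has exactly $2kq$ periodic points with rotation number $p/q$, yielding $\mu(gh^{-1}\mathcal{F}_{p/q}^{k})>0$ and the contradiction. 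Without this localization and the ensuing verification (that no new periodic points are created and the remaining ones survive), your induction step is not a proof. So the overall architecture is right, but the two load-bearing steps of the hard direction are missing, and for the first of them your proposed route is not the one that works easily.
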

\begin{proof}
  For each rational number $r$ let 
  \begin{equation}
  \begin{split}
  \mathcal{F}_r = \{f \in \hp(\mathbb{S}^1) : \; &\tau(f) = r, \text{ $f$ has finitely many periodic points} \\ &\text{and each of them is crossing}\}.
  \end{split}
  \end{equation}
  It is clear that each set $\mathcal{F}_r$ is the union of conjugacy 
  classes, since the rotation number is conjugacy invariant. 

  We only prove the first assertion of the theorem. The second will follow 
  using Claim \ref{c:f conj g iff number of periodic points is the same} and 
  Claim \ref{c:s1 homeo exists iff num of orbits is even} below. The moreover 
  part follows from Proposition \ref{p:co-Haar null hom(s1) has finitely many 
  crossing per points}. 
  
  Let us prove the ``only if'' part of the first statement. Using Proposition 
  \ref{p:co-Haar null hom(s1) has finitely many crossing per points}, it is 
  enough to show that if $\tau(f) \not \in \Q$ then the conjugacy class of $f$ 
  is Haar null. Let $\mathcal{C} = \{g \in \hp(\mathbb{S}^1) : \tau(g) = \tau(f)\}$. It 
  is enough to show that $\mathcal{C}$ is Haar null. In order to do so, we use 
  the measure $\mu$ defined in \eqref{e:mu def for S1}. Note that $\mathcal{C}$ 
  is closed, since the map $f \mapsto \tau(f)$ is continuous. Hence, using 
  Lemma \ref{l:haar null conjugacy} and the fact that $\mathcal{C}$ is 
  conjugacy invariant, it is enough to show that $\mu(\mathcal{C} g) = 0$ for 
  every $g \in \hp(\mathbb{S}^1)$. Clearly, $\{\alpha \in [0, 1] : R_\alpha \in 
  \mathcal{C}g\} = \{\alpha \in [0, 1] : \tau(R_\alpha g^{-1}) = \tau(f)\}$.  
  Let $G$ be a lift of $g^{-1}$. Using \cite[Proposition 11.1.9]{KH}, the map 
  $\alpha \mapsto \tau(\overline{R}_\alpha G)$ is strictly increasing at points 
  where $\tau(\overline{R}_\alpha G) \not \in \Q$. Hence the inverse image of 
  $\Z + \tau(f)$ is countable, thus $\{\alpha \in [0, 1] : \tau(R_\alpha 
  g^{-1}) = \tau(f)\}$ is countable, finishing the proof that $\mathcal{C}$ is 
  indeed Haar null. 
  
  Now we prove the ``if'' part. First we describe the conjugacy classes that 
  satisfy the conditions of the theorem. 

  \begin{claim}
    \label{c:f conj g iff number of periodic points is the same}
    Let $f, g \in \mathcal{F}_r$ for some $r \in \Q$. Then $f$ and $g$ are 
    conjugate if and only if they have the same number of periodic points.
  \end{claim}
  \begin{proof}
    Let $r = p/q$ with $p$, $q$ relatively prime, $q > 0$. If $f$ and $g$ are 
    conjugate then $f^q$ and $g^q$ are also conjugate, hence $f^q$ and $g^q$ 
    have the same number of fixed points, thus indeed, $f$ and $g$ have the 
    same number of periodic points. Now suppose that they have the same number 
    of periodic points. Then $f^q$ and $g^q$ have the same number of fixed 
    points, moreover, $f^q, g^q \in \mathcal{F}_0$. Using the first statement 
    of Lemma \ref{l:conjugacy in S1}, it is easy to see that if two 
    homeomorphisms from $\mathcal{F}_0$ have the same number of fixed points 
    then they are conjugate. Hence $f^q$ and $g^q$ are conjugate, but then, 
    using Lemma \ref{l:conjugacy in S1} again, $f$ and $g$ are also conjugate. 
    This proves the claim.
  \end{proof}
  
  \begin{claim}
    \label{c:s1 homeo exists iff num of orbits is even}
    Let $p$, $q$ be relatively prime numbers with $0 \le p < q$ and let 
    $n > 0$. We claim that there exists a homeomorphism $f \in 
    \mathcal{F}_{p/q}$ with exactly $n$ periodic points if and only if the 
    number of periodic orbits, $n / q$ is even.
  \end{claim}
  \begin{proof}
    Let $p, q$ and $n$ be as above with $n / q$ even, we first show that such 
    homeomorphisms exist. Let $F$, an increasing homeomorphism of $\R$ be 
    defined as follows. For every $k \in \{0, 1, \dots, n - 1\}$, $F$ maps the 
    interval $[\frac{k}{n}, \frac{k + 1}{n}]$ bijectively (increasingly and 
    continuously) into $[\frac{k + (np)/q}{n}, \frac{k + 1 + (np)/q}{n}]$. We 
    also require that if $k$ is even then every $y \in (\frac{k}{n}, \frac{k + 
    1}{n})$, is ``shifted up'', that is, $F(y) - \frac{k + (np)/q}{n} > y - 
    \frac{k}{n}$. On the other hand, if $k$ is odd then we require for such 
    points that $F(y) - \frac{k + (np)/q}{n} < y - \frac{k}{n}$. Then we 
    defined $F$ on $[0, 1]$ as a strictly increasing, continuous function with 
    $F(1) = F(0) + 1$, hence we can extend it uniquely to a homeomorphism of 
    $\R$ (which we also denote by $F$) with $F(x + 1) = F(x) + 1$ for every $x 
    \in \R$. Thus, $F$ is a lift of a homeomorphism $f \in \hp(\mathbb{S}^1)$. 
    
    We need to show that $f \in \mathcal{F}_{p/q}$ and it has exactly $n$ 
    periodic points. For every $k, \ell \in \N$, $F^{\ell q}(k/n) = k/n + \ell 
    p$, from which it follows that $\tau(f) = p/q$ and it has at least $n$ 
    periodic points. Let $y \in (\frac{k}{n}, \frac{k + 1}{n})$ for some 
    natural number $k$ with $0 \le k < n$. If $k$ is even then $k + (np)/q$ is 
    also even and so is $k + \ell (np)/q$ for every $\ell \in \N$. Hence 
    $y - 
    \frac{k}{n} < F(y) - \frac{k + (np)/q}{n} < \dots < F^q(y) - \frac{k + 
    (np)}{n} = F^q(y) - \frac{k}{n} + p$. If $k$ is odd then one can similarly 
    obtain $y - \frac{k}{n} > F^q(y) - \frac{k}{n} + p$. These facts imply that 
    in both cases, $y$ is not a fixed point of $f^q$, hence it is not a 
    periodic point of $f$, since all periodic points of $f$ have the same 
    period, $q$. On the other hand, these facts imply that each fixed point of 
    $f^q$ is crossing, showing that $f$ satisfies the required properties. 
    
    Now we prove that if such a homeomorphism $f$ exists then $n/q$ is even. 
    Suppose indirectly that $n/q$ is odd. It is clear from the fact that each 
    fixed point of $f^q$ is crossing that the number of fixed points of $f^q$ 
    is even, hence $n$ is even. Thus $q$ is even and $p$ is odd. Let $0 \le x_0 
    < x_1 < \dots < x_{n - 1} < 1$ represent the periodic points of $f$. Since 
    they form orbits of size $q$, there is a $k < q$ such that for every $i 
    < n$, $f(x_i) = x_{j(i)}$, where $j(i) \equiv i + \frac{n}{q}k \pmod{n}$. 
    Here $k$ and $q$ are relatively prime, since otherwise the size of the 
    orbits would be less then $q$, hence $k$ is odd. Actually, $k = p$, but we 
    do not need this fact. The fixed points of $f^q$ are exactly the points 
    $x_i$, hence it makes sense to define the map $\phi: \{0, 1, \dots, n - 1\} 
    \to \{-1, 1\}$ so that $\phi(i) = 1$ if and only if the values over the 
    interval $(x_i, x_{i + 1})$ are above the diagonal with respect to the 
    unique lift of $f^q$ where the fixed points of $f^q$ are on the diagonal. 
    Since every fixed point of $f^q$ is crossing, the values of $\phi$ are 
    alternating. 
    
    Let us assume that $\phi(0) = 1$, the case where $\phi(0) = -1$ can be 
    handled similarly. This means that if we choose an arbitrary point $y \in 
    (x_0, x_1)$ then $(f^q)^\ell(y)$ converges to $x_1$ as $\ell$ tends to 
    infinity, hence $f((f^q)^\ell(y))$ converges to $x_{j(1)}$. Since $k$ and 
    $n/q$ are odd numbers, $j(0)$ is odd, hence $\phi(j(0)) = -1$. 
    Consequently, for any point in $(x_{j(0)}, x_{j(0) + 1})$, for example, for 
    $f(y)$, we have that $(f^q)^\ell(f(y)) \to x_{j(0)}$ as $\ell \to \infty$. 
    But $(f^q)^\ell(f(y)) = f((f^q)^\ell(y))$, a contradiction. Hence the proof 
    of the claim is complete. 
  \end{proof}
  
  Now let $p, q \in \N$ be relatively prime numbers with $0 \le p < q$, $k \in 
  \N$ with $k \ge 1$ and let $\mathcal{F}_{p/q}^k = \{f \in \mathcal{F}_{p/q} : 
  |\fix(f^q)| = 2kq\}$. Using Claim \ref{c:s1 homeo exists iff num of orbits is 
  even} and Claim \ref{c:f conj g iff number of periodic points is the same}, 
  $\mathcal{F}_{p/q}^k$ is a conjugacy class and $\bigcup_{k \ge 1} 
  \mathcal{F}_{p/q}^k = \mathcal{F}_{p/q}$. Hence to finish the proof of the 
  theorem, we need to show that each $\mathcal{F}_{p/q}^k$ is non-Haar null. We 
  do so by proving the following two facts. First, we show that 
  $\mathcal{F}_{p/q}^k$ is non-Haar null for infinitely many $k$, that is, 
  $\bigcup_{k \ge \ell} \mathcal{F}_{p/q}^k$ is non-Haar null for every $\ell 
  \ge 1$. Then we show that if $\mathcal{F}_{p/q}^k$ is Haar null then 
  $\mathcal{F}_{p/q}^{k + 1}$ is also Haar null. One can easily see that these 
  two facts imply that $\mathcal{F}_{p/q}^k$ is non-Haar null for every $k \ge 
  1$. Now we prove these claims one after the other. 
  
  \begin{claim}
    For every $\ell \ge 1$, $\bigcup_{k \ge \ell} \mathcal{F}_{p/q}^k$ is 
    non-Haar null. 
  \end{claim}
  \begin{proof}
    Let $\ell \ge 1$ be fixed and $f \in \mathcal{F}_{p/q}^\ell$. Since every 
    fixed point of $f^q$ is crossing, there is an open neighbourhood 
    $\mathcal{U}$ of $f^q$ such that $|\fix(g)| \ge 2\ell q$ for every $g \in 
    \mathcal{U}$. Let $\mathcal{U}'$ be the inverse image of $\mathcal{U}$ 
    under the continuous map $g \mapsto g^q$. Now $f \in \mathcal{U}'$ and for 
    every $g \in \mathcal{U}'$, $|\fix(g^q)| \ge 2\ell q$. Since the map $g 
    \mapsto \tau(g)$ from $\hp(\mathbb{S}^1)$ to $\mathbb{S}^1$ is continuous (see e.g.~ 
    \cite[11.1.6]{KH}), we can also suppose by shrinking $\mathcal{U}'$ that if 
    $\tau(g) = r/q$ for some $0 \le r < q$ and $g \in \mathcal{U}$ then $r = 
    p$. This observation and the fact that $\fix(g^q) \neq \emptyset$ for 
    $\mathcal{U}'$ imply that $\tau(g) = p/q$ for every $g \in \mathcal{U}'$. 
    
    Since $\mathcal{U}'$ is a non-empty open set, it is non-Haar null. Hence to 
    finish the proof of the claim, it is enough to show that the set 
    $$
      \mathcal{H} = \mathcal{U}' \setminus \bigcup_{k \ge \ell} 
      \mathcal{F}_{p/q}^k
    $$
    is Haar null. But if $f \in \mathcal{H}$ then either $f$ has infinitely 
    many periodic points, or $f$ has a non-crossing periodic point. Hence 
    Proposition \ref{p:co-Haar null hom(s1) has finitely many crossing per 
    points} implies that $\mathcal{H}$ is indeed Haar null, finishing the proof 
    of the claim. 
  \end{proof}
  Now we finish the proof of the theorem by showing the following.
  \begin{claim}
    If $\mathcal{F}_{p/q}^k$ is Haar null for some $k \ge 1$ then 
    $\mathcal{F}_{p/q}^{k + 1}$ is also Haar null.
  \end{claim}
  \begin{proof}
    Suppose indirectly that $\mathcal{F}_{p/q}^k$ is Haar null, but 
    $\mathcal{F}_{p/q}^{k + 1}$ is not. Then, since $\mathcal{F}_{p/q}^{k + 1}$ 
    is conjugacy invariant, there exists a Borel probability 
    measure $\mu$ such that for every $f \in \hp(\mathbb{S}^1)$, 
    $\mu(f\mathcal{F}_{p/q}^k) = 0$, but $\mu(g\mathcal{F}_{p/q}^{k + 1}) > 0$ 
    for some $g \in \hp(\mathbb{S}^1)$. We denote by $K$ the number of periodic points 
    of a homeomorphism $f \in \mathcal{F}_{p/q}^{k + 1}$, that is, $K = q(2k + 
    2)$. 
    
    Let $r_0 < s_0 < r_1 < s_1 < \dots < r_{K - 1} < s_{K - 1}$ 
    be rational elements of $\mathbb{S}^1$, ordered cyclically. For such numbers, let
    $\mathcal{F}_{p/q}^{k + 1}(r_0, s_0, \dots, r_{K - 1}, s_{K - 1}) \subset 
    \mathcal{F}_{p/q}^{k + 1}$ contain $f \in \mathcal{F}_{p/q}^{k + 1}$ if and 
    only if each interval $(r_i, s_i)$ contains exactly one periodic point of 
    $f$. It is easy to see that each $f \in \mathcal{F}_{p/q}^{k + 1}$ is 
    contained in some $\mathcal{F}_{p/q}^{k + 1}(r_0, s_0, \dots, r_{K - 1}, 
    s_{K - 1})$, hence for at least one of these sets, 
    $\mu(g\mathcal{F}_{p/q}^{k + 1}(r_0, s_0, \dots, r_{K - 1}, s_{K - 1})) > 
    0$. We now fix $r_0$, $s_0, \dots$, $r_{K - 1}$, $s_{K - 1}$ in such a way, 
    and let $\mathcal{F}^+ \subset \mathcal{F}_{p/q}^{k + 1}(r_0, s_0, \dots, 
    r_{K - 1}, s_{K - 1})$ contain $f \in \mathcal{F}_{p/q}^{k + 1}(r_0, s_0, 
    \dots, r_{K - 1}, s_{K - 1})$ if and only if the unique fixed point 
    of $f^q$ in the interval $(r_0, s_0)$ is repulsive.
    Similarly, let $\mathcal{F}^- \subset \mathcal{F}_{p/q}^{k + 1}(r_0, s_0, 
    \dots, r_{K - 1}, s_{K - 1})$ contain $f$ if and only if the fixed point 
    of $f^q$ in $(r_0, s_0)$ is attractive. It is easy to see that 
    $\mathcal{F}_{p/q}^{k + 1}(r_0, s_0, \dots, r_{K - 1}, s_{K - 1}) = 
    \mathcal{F}^+ \cup \mathcal{F}^-$, hence we can suppose without loss of 
    generality that $\mu(g\mathcal{F}^+) > 0$: If $\mu(g\mathcal{F}^+) = 0$ 
    then $\mu(g\mathcal{F}^-) > 0$ and by rearranging the intervals cyclically 
    (e.g. $r_{K - 1}' = r_0$, $s_{K - 1}' = s_0$, and $r_i' = r_{i + 1}$, $s_i' = s_{i + 1}$ 
    otherwise) we can achieve that the sets $\mathcal{F}^+$ and $\mathcal{F}^-$ are swapped, using 
    the fact that attractive and repulsive fixed points of $f^q$ are alternating by 
    Lemma \ref{l:crossing fixed points}. 
    
    To get a contradiction, we show that $h\mathcal{F}^+ \subset 
    \mathcal{F}_{p/q}^k$ for some $h \in \hp(\mathbb{S}^1)$, hence $\mathcal{F}^+ 
    \subset h^{-1}\mathcal{F}_{p/q}^k$, thus $\mu(gh^{-1}\mathcal{F}_{p/q}^k) > 
    0$. 
    
    For each $f \in \mathcal{F}^+$, let $p^f_i \in (r_i, s_i)$ be the unique 
    periodic point of $f$ in the interval $(r_i, s_i)$. It follows easily from 
    Lemma \ref{l:attractive, repulsive fixed points} that if $x \in (p_0^f, 
    p_1^f)$ then the sequence $(f^{nq}(x))_{n \in \N}$ is monotone and contained in $(p_0^f, 
    p_1^f)$, thus it converges. The limit is necessarily a fixed point of 
    $f^q$. Since $p_0^f$ is repulsive and hence $p_1^f$ is attractive, we have the 
    following: 
    \begin{equation}
      \label{e:p_1 is attractive}
      \forall f \in \mathcal{F}^+ \; \forall x \in (p_0^f, p_1^f) \left(
      f^{nq}(x) \to p_1^f\right).
    \end{equation}
    
    It is easy to see that $f$ maps a fixed point of $f^q$ to another fixed 
    point in a monotone way, that is, there is an integer $\ell$ with $0 \le 
    \ell < K$ such that $f(p_i^f) = p^f_{j(i)}$, where $j(i) \equiv i + \ell 
    \pmod{K}$ for every $i$. Let us denote by $P_0^f \in [0, 1)$ the real 
    number corresponding to $p_0^f$ and for every $i < K$, let $P_i^f \in \R$ 
    be the real number corresponding to $p_i^f$ with $P_0^f < P_1^f < \dots < 
    P_{K - 1}^f < P_0^f + 1$. For $n \in \N$ we also use the notation $P_n^f = 
    P_r^f + m$, where $n = mK + r$ with $0 \le r < K$. It is easy to see that 
    $P_0^f < P_1^f < \dots P_{K - 1}^f < P_K^f < \dots$. Let $F$ be the lift of 
    $f$ with $F(P_0^f) = P_\ell^f$. It follows that $F^{iK}(P_0^f) = 
    P_{iK\ell}^f = P_0^f + i\ell$ for every $i \in \N$, hence $\tau(F) = 
    \ell/K$. Since $\tau(F) \equiv \tau(f) \pmod{1}$, and $\tau(f) \equiv p/q 
    \pmod{1}$, and both $\ell/K$ and $p/q$ are numbers from $[0, 1)$, it 
    follows that $\ell/K = p/q$. Hence $\ell = p(2k + 2)$, the same for each $f 
    \in \mathcal{F}^+$. To simplify notation, for each $n \in \N$ we denote 
    by $p_n^f, r_n$ and $s_n$ the elements of $\mathbb{S}^1$ equal to $p_{r}^f, r_{r}$ 
    and $s_{r}$, respectively, where $0 \le r < K$ and $n \equiv r \pmod{K}$. 
    
    Now we define a homeomorphism $h \in \hp(\mathbb{S}^1)$ such that the periodic 
    points $p_{i\ell + 1}^f$ and $p_{i\ell + 2}^f$ ($i = 0, 1, \dots, 
    q - 1$) are not periodic for $hf$. Let $\mathcal{P}'_f$ denote the set of these 
    periodic points, and let $\mathcal{P}_f$ denote the set of all periodic points of 
    $f$. Note that $\mathcal{P}'_f$ consists of two periodic orbits.

    For $i = 0, 1, \dots, q - 1$ let the points $x_i, y_i, u_i$ and $v_i$ be 
    chosen so that $s_{i\ell} < x_i < y_i < r_{i\ell + 1}$ and 
    $s_{i\ell + 2} < u_i < v_i < r_{i\ell + 3}$. We use the convention $x_n = 
    x_{r}$, $y_n = y_{r}$, $u_n = u_{r}$ and $v_n = v_{r}$, where $0 \le r < q$ 
    and $n \equiv r \pmod{q}$. Now we define $h$ the following way. On each 
    interval of the form $[x_i, v_i]$, let $h$ map $[x_i, y_i]$ to $[x_i, u_i]$ 
    linearly, and also map $[y_i, v_i]$ to $[u_i, v_i]$ linearly, so that $h$ 
    is order preserving. Then $h(x_i) = x_i$ and $h(v_i) = v_i$, hence it makes 
    sense to require that $h(x) = x$ if $x$ is outside each of these intervals. 
    It is clear that 
    \begin{equation}
      \label{e:h is above the diagonal}
      \forall f \in \mathcal{F}^+ \; \forall n \in \N \; \forall y \in 
      (p_{n\ell}^f, p_{n\ell + 3}^f) \left(h(y) \in [y, p_{n\ell + 3}^f)\right).
    \end{equation}
    We now claim that $hf \in \mathcal{F}_{p/q}^k$ for every $f \in 
    \mathcal{F}^+$. 
    
    Let $f \in \mathcal{F}^+$. If $p\in \mathcal{P}_f \setminus \mathcal{P}'_f$ then 
    it is easy to see that $(hf)^n(p) = f^n(p)$ for every $n \in \N$, hence $p$ is a 
    periodic point of $hf$ with period $q$, hence $hf$ has at least $2kq$ periodic 
    points, that is, at least $2k$ periodic orbits. 
    It is also an immediate consequence that $\tau(hf) = p/q$, hence 
    \begin{equation}
    \label{e:hf is q-periodic}
    \text{every periodic point of $hf$ has period $q$.}
    \end{equation}
    We now show that no other point is periodic. Let 
    \begin{equation}
    \label{e:def of x}
    x \in (\mathbb{S}^1 \setminus \mathcal{P}_f) \cup \mathcal{P}'_f    
    \end{equation}
    be such a point, we complete the proof by showing that $x$ is not periodic. 
    We distinguish several cases depending on $x$. 
    
    Case 1: there is no $n \in \N$ with $x \in (p_{n\ell}^f, p_{n\ell + 3}^f)$. 
    Then the same holds for $f^m(x)$, that is, for every $m \in \N$ there is no 
    $n \in \N$ with $f^m(x) \in (p_{n\ell}^f, p_{n\ell + 3}^f)$. Since $h$ is 
    the identity restricted to the complement of the intervals of the form 
    $(p_{n\ell}^f, p_{n\ell + 3}^f)$, one can show by induction that 
    \begin{equation}
    \label{e:f(x) = hf(x)} 
    f^m(x) = (hf)^m(x) \text{ for every $m \in \N$.}
    \end{equation}
    From the fact that $x \not \in (p_{n\ell}^f, p_{n\ell + 3}^f)$ for every $n 
    \in \N$, it follows that $x \not \in \mathcal{P}'_f$. Using also \eqref{e:def of x}, 
    it is clear that $x$ is not a periodic point of $f$, hence by \eqref{e:f(x) 
    = hf(x)}, it is not a periodic point of $hf$.
    
    Case 2: there exists an $n \in \N$ such that 
    \begin{equation}
      \label{e:x in (p_0, p_3)}
      x \in (p_{n \ell}^f, p_{n \ell + 3}^f).
    \end{equation}
    In order to show that $x$ is not a periodic point of $hf$, using 
    \eqref{e:hf is q-periodic}, it is enough to prove that 
    \begin{equation}
    \label{e:hf^q(x) > x}
    p_{n\ell}^f < x < (hf)^q(x) < p_{n\ell + 3}^f.
    \end{equation}
    We first show that 
    \begin{equation}
    \label{e:hf^q(x) > f^q(x)}
    p_{n\ell}^f < f^q(x) \le (hf)^q(x) < p_{n \ell + 3}^f.
    \end{equation}
    In order to do so, it is enough to prove that 
    \begin{equation}
      \label{e:hf^n(x) > f^n(x)}
      p_{(n + m)\ell}^f < f^m(x) \le (hf)^m(x) < p_{(n + m) \ell + 3}^f
    \end{equation}
    for every $m \in \N$, since applying \eqref{e:hf^n(x) > f^n(x)} with $m = 
    q$ gives \eqref{e:hf^q(x) > f^q(x)}, using that $\ell q = Kp \equiv 0 
    \pmod{K}$. 
    For $m = 0$, \eqref{e:hf^n(x) > f^n(x)} is immediate from \eqref{e:x in 
    (p_0, p_3)}. Now suppose that \eqref{e:hf^n(x) > f^n(x)} is true for some 
    $m \in \N$. Then, using that $f$ is an orientation preserving 
    homeomorphism, $p_{(n + m + 1)\ell}^f < f^{m + 1}(x) \le f((hf)^m(x)) < 
    p_{(n + m + 1) \ell + 3}^f$. Using \eqref{e:h is above the diagonal} with 
    $n = n + m + 1$ and $y = f((hf)^m(x))$, we get \eqref{e:hf^n(x) > f^n(x)} for 
    $m + 1$. Now we distinguish three subcases.
    
    Case 2a: $x \in (p_{n \ell}^f, p_{n \ell + 1}^f)$. Then, using the fact 
    that $p_{n \ell}^f$ is a repulsive fixed point of $f^q$ and Lemma 
    \ref{l:attractive, repulsive fixed points}, $p_{n \ell}^f < x < f^q(x) < 
    p_{n\ell + 1}^f$. This, together with \eqref{e:hf^q(x) > f^q(x)} implies 
    \eqref{e:hf^q(x) > x}. 
    
    Case 2b: $x \in (p_{n \ell + 2}^f, p_{n \ell + 3}^f)$. Then, using Lemma 
    \ref{l:crossing fixed points} the fact that $p_{n \ell}^f$ is a repulsive 
    fixed point of $f^q$, $p_{n \ell + 2}^f$ is also a repulsive fixed point of 
    $f^q$. As in the previous case, Lemma \ref{l:attractive, repulsive fixed 
    points} implies $p_{n \ell + 2}^f < x < f^q(x) < p_{n \ell + 3}^f$. And 
    again using \eqref{e:hf^q(x) > f^q(x)}, this implies \eqref{e:hf^q(x) > x}. 
    
    Case 2c: $x \in [p_{n \ell + 1}^f, p_{n \ell + 2}^f]$. Then $f(x) \in 
    [p_{(n + 1)\ell + 1}^f, p_{(n + 1)\ell + 2}^f]$. Using that $(y_{n + 1}, u_{n + 1}) 
    \supset [p_{(n + 1)\ell + 1}^f, p_{(n + 1)\ell + 2}^f]$ and $(u_{n + 1}, 
    v_{n + 1}) \subset (p_{(n + 1)\ell + 2}^f, p_{(n + 1)\ell + 3}^f)$, and also 
    the fact that $h$ maps $(y_{n + 1}, u_{n + 1})$ to $(u_{n + 1}, v_{n + 
    1})$, we have 
    \begin{equation}
    \label{e:hf(x) jo helyen}
    hf(x) \in (p_{(n + 1)\ell + 2}^f, p_{(n + 1)\ell + 3}^f). 
    \end{equation}
    Applying \eqref{e:hf^n(x) > f^n(x)} with $hf(x)$ in place of $x$ and $n + 
    1$ in place of $n$ and $m = q - 1$, we get 
    \begin{equation}
    \label{e:nagyon vege van mar}
    p_{(n + q)\ell}^f = p_{n\ell}^f < f^{q - 1}(hf(x)) \le (hf)^q(x) < 
    p_{(n + q)\ell + 3}^f = p_{n\ell + 3}^f,
    \end{equation}
    using again that $q \ell = Kp \equiv 0 \pmod{K}$. 
    From \eqref{e:hf(x) jo helyen}, we have $f^{q - 1}(hf(x)) \in (p_{(n + 
    q)\ell + 2}^f, p_{(n + q)\ell + 3}^f) = (p_{n\ell + 2}^f, p_{n\ell + 
    3}^f)$, hence $(hf)^q(x) \in (p_{n\ell + 2}^f, p_{n\ell + 3}^f)$, using 
    \eqref{e:nagyon vege van mar}. Since in Case 2c, $x$ was chosen from $[p_{n 
    \ell + 1}^f, p_{n \ell + 2}^f]$, this implies \eqref{e:hf^q(x) > x} 
    finishing the proof of the claim.
  \end{proof}
  Therefore the proof of the theorem is also complete.
\end{proof}

\section{The unitary group}
\label{s:U(l^2)}
  
  In this section we consider $\mathcal{U}(\ell^2)$, the group of unitary transformations 
  of the Hilbert space $\ell^2$. 
  We will use some well known facts from the spectral theory of unitary 
  operators following the theory developed in \cite[Appendix]{parry} (see also 
  \cite{krit}) up to Theorem \ref{t:ul2conjugate}. Let $U \in \mathcal{U}(\ell^2)$ and $x \in \ell^2$, then the \emph{cyclic 
  subspace generated by $x$} is the closure of the linear span of $\{U^n(x):n 
  \in \mathbb{Z}\}$ and we denote this space by $Z(x)$. If $x$ has norm one then
  by Bochner's theorem there exists a unique probability measure $\mu_x$ on 
  $\mathbb{S}^1$ such that for every $n \in \Z$ \[\langle U^n(x),x\rangle=\int_{\mathbb{S}^1} z^n d\mu_x(z).\]
It can be shown that if $Z(x) \subset Z(y)$ then $\mu_x 
  \ll \mu_y$. If $Z(x)$ is maximal, i.~e.~it is 
  not a proper subspace of any other cyclic subspace, then $\mu_x$ is called a 
  \textit{spectral measure of $U$}. Moreover, if $Z(x)$ is maximal then for 
  every $x'$ we have $\mu_{x'} \ll \mu_x$. Thus, if $x$ and $x'$ generate 
  maximal cyclic subspaces then $\mu_x \simeq \mu_{x'}$ (that is, $\mu_x \ll 
  \mu_{x'}$ and $\mu_{x'} \ll \mu_{x}$) so the spectral 
  measure is unique up to $\simeq$. 
  
  Moreover, we can decompose $\ell^2$ into the orthogonal direct sum of cyclic subspaces as 
  $\ell^2=\oplus_{i \in\N} Z(x_i)$ such that $\mu_{x_0} \gg  \mu_{x_1} \gg 
  \dots$. Let $A_i \subset \mathbb{S}^1$ be the support of the function 
  $\frac{d\mu_{x_i}}{d\mu_{x_1}}$ and define a function $n:\mathbb{S}^1 \to \mathbb{N} 
  \cup \{\infty\}$ as $n=\sum_i \chi_{A_i}$. This function is called a 
  \textit{multiplicity function of $U$} and it is defined $\mu_{x_1}$-almost 
  everywhere. It can be shown that the multiplicity function is also unique a.e. with respect to a spectral measure, i. e. it does not depend on the choice 
  of the vectors $x_i$. Thus, it makes sense 
  to talk about \textit{the} spectral measure $\mu_U$ and \textit{the} multiplicity function 
  $n_U$ of an operator $U$.
  
  From the description of the process we can derive the following simple 
  corollary.
  
  \begin{corollary}
    \label{c:rotate}
    Let $U \in \mathcal{U}(\ell^2)$ be given with spectral measure $\mu_U$ and multiplicity 
    function $n_U$, and let $q \in \mathbb{S}^1$. Then $q U \in  \mathcal{U}(\ell^2)$, $\mu_{q 
    U}=q_*\mu_{U} $ and $n_{q U}=n \circ q^{-1}$ where $q$ and $q^{-1}$ denote 
    the multiplication on $\mathbb{S}^1$ by $q$ and $q^{-1}$, and $q_*\mu_U$ denotes the 
    push-forward measure of $\mu_U$ with respect to the multiplication by $q$. 
  \end{corollary}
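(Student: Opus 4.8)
The plan is to trace through the construction of the spectral measure and the multiplicity function recalled just above, observing at each stage how multiplication by the scalar $q$ transforms the relevant objects. First, $qU$ is the composition of the unitary $U$ with the scalar operator $z \mapsto qz$, which is unitary because $|q| = 1$; hence $qU \in \mathcal{U}(\ell^2)$.

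Next I would note that the cyclic subspaces are literally unchanged: since $(qU)^n(x) = q^n U^n(x)$ for every $n \in \Z$, the linear span of $\{(qU)^n(x) : n \in \Z\}$ coincides with that of $\{U^n(x) : n \in \Z\}$, so $Z(x)$ is the same whether computed with respect to $U$ or to $qU$. In particular the inclusion relations among cyclic subspaces, and the property of a cyclic subspace being maximal, are identical for the two operators, so the orthogonal decomposition $\ell^2 = \oplus_{i \in \N} Z(x_i)$ with $\mu_{x_0} \gg \mu_{x_1} \gg \cdots$ that is used to define $\mu_U$ and $n_U$ can be reused verbatim for $qU$.

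For the measures, I would compute, for a unit vector $x$, using Bochner's theorem, $\langle (qU)^n(x), x \rangle = q^n \langle U^n(x), x\rangle = q^n \int_{\mathbb{S}^1} z^n \, d\mu_x(z) = \int_{\mathbb{S}^1} (qz)^n \, d\mu_x(z) = \int_{\mathbb{S}^1} w^n \, d(q_*\mu_x)(w)$, so by the uniqueness clause in Bochner's theorem the measure attached to $x$ by $qU$ is exactly $q_*\mu_x$. Applying this to a vector generating a maximal cyclic subspace (the same subspace for both operators) gives $\mu_{qU} = q_*\mu_U$, which is the asserted equality, since the spectral measure is only determined up to $\simeq$ in any case. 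For the multiplicity function, the chain $\mu_{x_0} \gg \mu_{x_1} \gg \cdots$ is replaced by $q_*\mu_{x_0} \gg q_*\mu_{x_1} \gg \cdots$; since push-forward by the bijective, bimeasurable rotation $q$ transforms Radon--Nikodym derivatives by $\frac{d(q_*\mu_{x_i})}{d(q_*\mu_{x_1})} = \big(\frac{d\mu_{x_i}}{d\mu_{x_1}}\big)\circ q^{-1}$ ($q_*\mu_{x_1}$-a.e.), the support $A_i$ of $\frac{d\mu_{x_i}}{d\mu_{x_1}}$ is replaced by $q(A_i)$, whence $n_{qU} = \sum_i \chi_{q(A_i)} = \big(\sum_i \chi_{A_i}\big)\circ q^{-1} = n_U \circ q^{-1}$, the equality holding a.e.\ with respect to the spectral measure, as required.

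I do not expect a genuine obstacle here: every step is a short computation. The only points that need care are the almost-everywhere/up-to-equivalence nature of the definitions of $\mu_U$ and $n_U$ (so that all the displayed equalities are to be read in the appropriate a.e.\ sense), and the elementary change-of-variables identity for Radon--Nikodym derivatives under the rotation $q$, which I would state and use without further ado.
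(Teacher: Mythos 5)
Your proof is correct, and since the paper merely states this as a "simple corollary" of the preceding construction without supplying details, your argument fills in exactly the computation the authors have in mind: the cyclic subspaces are unchanged by the scalar $q$, Bochner's theorem turns $\langle (qU)^n x, x\rangle = q^n\langle U^n x, x\rangle$ into the pushforward relation $\mu^{qU}_x = q_*\mu_x$, and the change-of-variables rule for Radon--Nikodym derivatives under the bijection $z\mapsto qz$ yields $n_{qU} = n_U\circ q^{-1}$.
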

    
  The following important theorem states that the spectral measure and the 
  multiplicity function fully characterizes the conjugacy classes in 
  $\mathcal{U}(\ell^2)$.
  
  \begin{theorem}
    \label{t:ul2conjugate} Let $U_1,U_2 \in \mathcal{U}(\ell^2)$ and let $\mu_1,\mu_2$ be 
    spectral measures and $n_1,n_2$ multiplicity functions of $U_1$ 
    and $U_2$. Then $U_1$ and $U_2$ are conjugate iff $\mu_1 \simeq \mu_2$ and 
    $n_1=n_2$ holds $\mu_1$-almost everywhere.  
  \end{theorem}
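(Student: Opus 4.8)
For the ``only if'' direction, suppose $V \in \mathcal{U}(\ell^2)$ conjugates $U_1$ to $U_2$. Since $V$ is unitary, for every unit vector $x$ and every $n \in \Z$ we have $\langle U_2^n(Vx), Vx\rangle = \langle VU_1^n(x), Vx\rangle = \langle U_1^n(x), x\rangle$, so the uniqueness clause in Bochner's theorem gives that the $U_2$-measure of $Vx$ equals the $U_1$-measure of $x$. Moreover $V$ maps the cyclic subspace $Z(x)$ for $U_1$ onto the cyclic subspace $Z(Vx)$ for $U_2$, preserving inclusions and hence maximality. Pushing a decomposition $\ell^2 = \bigoplus_i Z(x_i)$ with $\mu_{x_0} \gg \mu_{x_1} \gg \cdots$ realizing the invariants of $U_1$ through $V$ yields such a decomposition for $U_2$ with the identical sequence of measures; hence $\mu_{U_1} \simeq \mu_{U_2}$, and by the asserted uniqueness of the multiplicity function, $n_1 = n_2$ holds $\mu_1$-almost everywhere.

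For the converse the central tool is the model for cyclic operators: if $Z(x) = \ell^2$, then $U^n(x) \mapsto (z \mapsto z^n)$ extends to a unitary $\ell^2 \to L^2(\mathbb{S}^1, \mu_x)$ intertwining $U$ with multiplication by $z$. This follows from $\langle U^n(x), U^m(x)\rangle = \int_{\mathbb{S}^1} z^{n-m}\, d\mu_x$ together with the density of trigonometric polynomials in $L^2$ of a finite Borel measure on $\mathbb{S}^1$. Combining this with the orthogonal decomposition $\ell^2 = \bigoplus_i Z(x_i)$ provided by the developed theory, every $U \in \mathcal{U}(\ell^2)$ is conjugate to multiplication by $z$ on $\bigoplus_i L^2(\mathbb{S}^1, \mu_{x_i})$ with $\mu_{x_0} \gg \mu_{x_1} \gg \cdots$. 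Fixing a spectral measure $\mu$ for $U$ and letting $B_i \subset \mathbb{S}^1$ be the support of the Radon--Nikodym derivative of $\mu_{x_i}$ with respect to $\mu$ (well defined up to a $\mu$-null set), the unitary $f \mapsto f \cdot \sqrt{d\mu_{x_i}/d\mu}$ identifies $L^2(\mathbb{S}^1, \mu_{x_i})$ with $L^2(B_i, \mu)$ compatibly with multiplication by $z$; the dominance chain forces $B_0 \supseteq B_1 \supseteq \cdots$ modulo $\mu$-null sets, and $n_U = \sum_i \chi_{B_i}$, so $B_i = \{w \in \mathbb{S}^1 : n_U(w) \ge i+1\}$ up to a $\mu$-null set. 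Hence the conjugacy class of $U$ is encoded by the pair $(\mu_U, n_U)$ alone: it is the class of multiplication by $z$ on $\bigoplus_i L^2(\{n_U \ge i+1\}, \mu_U)$.

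Finally, given $U_1, U_2$ with $\mu_1 \simeq \mu_2$ and $n_1 = n_2$ a.e., put $\mu := \mu_1$ and apply $f \mapsto f\sqrt{d\mu_2/d\mu}$ on each summand to rewrite the model of $U_2$ over $\mu$ as well; then the summands $L^2(\{n_1 \ge i+1\}, \mu)$ and $L^2(\{n_2 \ge i+1\}, \mu)$ coincide for every $i$, so $U_1$ and $U_2$ are conjugate to the very same operator, and are therefore conjugate to each other. I expect the only delicate points to be the cyclic model theorem and the surrounding measure-theoretic bookkeeping --- that the sets $B_i$ are only defined modulo $\mu$-null sets, that the change between equivalent reference measures via $f \mapsto f\sqrt{d\mu'/d\mu}$ really is a unitary intertwiner of the two multiplication operators, and that $\mu_{x_i} \gg \mu_{x_{i+1}}$ yields the nesting $B_i \supseteq B_{i+1}$ --- rather than any single hard estimate; everything else is formal manipulation of orthogonal direct sums.
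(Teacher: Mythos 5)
The paper does not actually prove Theorem \ref{t:ul2conjugate}: it is imported as a known result of spectral multiplicity theory, with the reader referred to the Appendix of Parry's book and to the other cited source, and the surrounding text only sets up the invariants $\mu_U$, $n_U$ and asserts their existence and a.e.\ uniqueness. Your argument is therefore necessarily a different route, namely a sketch of the standard Hahn--Hellinger proof, and in its essentials it is correct. The ``if'' direction is genuinely self-contained given the decomposition $\ell^2=\oplus_i Z(x_i)$ with $\mu_{x_0}\gg\mu_{x_1}\gg\cdots$: the cyclic model $Z(x)\cong L^2(\mathbb{S}^1,\mu_x)$ via $U^n(x)\mapsto z^n$ (using Bochner plus density of trigonometric polynomials), the change of reference measure by $f\mapsto f\sqrt{d\mu_{x_i}/d\mu}$, the nesting $B_{i+1}\subseteq B_i$ mod $\mu$-null from the chain rule for Radon--Nikodym derivatives, and the identification $B_i=\{n_U\ge i+1\}$ do yield that $U$ is unitarily equivalent to multiplication by $z$ on $\bigoplus_i L^2(\{n_U\ge i+1\},\mu_U)$, and composing the two model identifications gives a conjugating element of $\mathcal{U}(\ell^2)$ when the invariants agree. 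Two remarks on where the weight really lies. First, your ``only if'' direction transports a canonical decomposition of $U_1$ through the conjugating unitary $V$ (correctly noting that $V$ preserves cyclic subspaces, inclusions, and hence maximality) and then invokes the a.e.\ uniqueness of the multiplicity function to conclude $n_1=n_2$; that uniqueness is precisely the nontrivial content of the cited theory (a Hellinger-type argument), so your proof is only as self-contained as the paper's own setup --- which is acceptable here, since the paper grants exactly those facts, but worth stating explicitly. Second, small bookkeeping points you flag yourself (sets $B_i$ defined mod $\mu$-null, $\sqrt{d\mu'/d\mu}$ giving a unitary intertwiner of the multiplication operators, surjectivity of $f\mapsto f\sqrt{h_i}$ onto $L^2(B_i,\mu)$) all check out. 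So: correct, standard, and a reasonable substitute for the paper's citation; what the paper's approach buys is brevity by outsourcing existence and uniqueness of the invariants, while yours makes visible why the pair $(\mu_U,n_U)$ is a complete conjugacy invariant.
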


  Now we are ready to prove our statement about the non-Haar null classes.
  
  \begin{theorem} \label{t:ul2main}
    Let $U \in \mathcal{U}(\ell^2)$ be given with spectral measure $\mu_U$ and multiplicity 
    function $n_U$. If the conjugacy class of $U$ is non-Haar null then $\mu_U 
    \simeq \lambda$ and $n_U$ is constant $\lambda$-a.~e., where $\lambda$ denotes Lebesgue measure.
  \end{theorem}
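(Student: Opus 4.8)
The plan is to prove the contrapositive: if $\mu_U \not\simeq \lambda$, or if $\mu_U \simeq \lambda$ but $n_U$ is not $\lambda$-a.e.\ constant, then the conjugacy class $C$ of $U$ is Haar null. Note first that $C$ is the continuous image of $\mathcal{U}(\ell^2)$ under $V \mapsto V U V^{-1}$, hence analytic and in particular universally measurable, and by Theorem~\ref{t:ul2conjugate} it is conjugacy invariant and equals $\{W \in \mathcal{U}(\ell^2) : \mu_W \simeq \mu_U \text{ and } n_W = n_U \ \mu_W\text{-a.e.}\}$. So by Lemma~\ref{l:haar null conjugacy} it suffices to produce a single Borel probability measure $\mu$ on $\mathcal{U}(\ell^2)$ with $\mu(gC) = 0$ for every $g \in \mathcal{U}(\ell^2)$.

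The measure comes from the rotation action. Let $\Phi \colon \mathbb{S}^1 \to \mathcal{U}(\ell^2)$ be the continuous map $\Phi(q) = q\cdot\id$, and let $\mu = \Phi_*\lambda$ be the push-forward of normalised Lebesgue ($=$ Haar) measure on $\mathbb{S}^1$. Fix $g$, put $V = g^{-1}$, and observe that since scalar operators are central, $q\cdot\id \in gC$ iff $qV \in C$; by Corollary~\ref{c:rotate} the latter means $q_*\mu_V \simeq \mu_U$ and $n_V \circ q^{-1} = n_U$ ($q_*\mu_V$-a.e.). Consequently $\mu(gC) = \lambda(S_V)$, where $S_V := \{q \in \mathbb{S}^1 : q_*\mu_V \simeq \mu_U \text{ and } n_V\circ q^{-1} = n_U\ (q_*\mu_V\text{-a.e.})\}$, and $S_V$ is Lebesgue measurable, being the preimage of the analytic set $C$ under $q \mapsto qV$. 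Thus the whole theorem reduces to the single claim that $\lambda(S_V) = 0$ for every unitary $V$.

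Suppose towards a contradiction that $\lambda(S_V) > 0$ and fix $q_0 \in S_V$. For every $q \in S_V$ we then have $q_*\mu_V \simeq q_{0*}\mu_V$ (both being $\simeq \mu_U$) and $n_V \circ q^{-1} = n_V \circ q_0^{-1}$ off a $q_{0*}\mu_V$-null set; applying $(q_0^{-1})_*$ this says that $r_*\mu_V \simeq \mu_V$ and $n_V \circ r^{-1} = n_V$ $\mu_V$-a.e.\ for every $r$ in the positive-measure set $q_0^{-1}S_V$. Now $G_1 := \{r \in \mathbb{S}^1 : r_*\mu_V \simeq \mu_V\}$ is a subgroup of $\mathbb{S}^1$ (for products: $(rs)_*\mu_V = r_*(s_*\mu_V) \simeq r_*\mu_V \simeq \mu_V$) containing a measurable set of positive measure, so by Steinhaus' theorem it contains a neighbourhood of the identity, and by connectedness $G_1 = \mathbb{S}^1$; thus $\mu_V$ is quasi-invariant under all rotations. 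A routine Fubini argument then forces $\mu_V \simeq \lambda$: e.g.\ if $\mu_V(A) = 0$ then $\mu_V(r^{-1}A) = 0$ for every $r$, and integrating this against $d\lambda(r)$, swapping the order of integration and using the translation invariance of $\lambda$ yields $\lambda(A) = \lambda(A)\,\mu_V(\mathbb{S}^1) = 0$, so $\lambda \ll \mu_V$; the reverse absolute continuity is symmetric. Since also $\mu_U \simeq q_{0*}\mu_V \simeq q_{0*}\lambda = \lambda$, this already contradicts the first case.

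In the remaining case $\mu_U \simeq \lambda$, so $\mu_V \simeq \lambda$ and $n_V$ is defined $\lambda$-a.e. The set $\{r \in \mathbb{S}^1 : n_V \circ r^{-1} = n_V \ \lambda\text{-a.e.}\}$ is again a subgroup (here one uses that $\lambda$ itself is rotation invariant), and it contains the positive-measure set $q_0^{-1}S_V$, so by Steinhaus and connectedness it is all of $\mathbb{S}^1$; in particular $n_V$ is invariant under an irrational rotation, hence $\lambda$-a.e.\ constant by ergodicity, and then $n_U = n_V \circ q_0^{-1}$ is $\lambda$-a.e.\ constant — contradicting the second case. This proves $\lambda(S_V) = 0$ for every $V$, so $\mu$ witnesses that $C$ is Haar null. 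The main obstacle is the third step: recognising that the merely positive-measure hypothesis on $S_V$ self-improves, through the subgroup/Steinhaus mechanism, to genuine rotation-quasi-invariance of $\mu_V$ (and the analogous invariance of $n_V$), after which the measure-class rigidity of the transitive rotation action on $\mathbb{S}^1$ finishes both cases; verifying that the relevant sets are measurable and that $C$ is analytic are comparatively routine points.
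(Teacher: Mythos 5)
Your proof is correct and reaches the same endgame as the paper (rotation--quasi-invariance of the spectral measure forces $\mu_U \simeq \lambda$, and invariance of the multiplicity function under all rotations forces it to be a.e.\ constant), but it gets there by a genuinely different mechanism. The paper argues the implication directly: assuming $C^U$ is non-Haar null, it invokes \cite[Corollary 5.2]{auto} — a Steinhaus-type theorem for non-Haar null sets in Polish groups developed in the authors' earlier paper — to conclude that $(C^U)^{-1}C^U$ contains all scalars $qI$ with $|q-1|$ small, and then reads off quasi-invariance from Corollary~\ref{c:rotate} and Theorem~\ref{t:ul2conjugate}. You instead prove the contrapositive: you exhibit the explicit witness measure $\mu = \Phi_*\lambda$ (the image of Haar measure on $\mathbb{S}^1$ under $q \mapsto q\,\id$), reduce via Lemma~\ref{l:haar null conjugacy} to showing $\lambda(S_V) = 0$ for every $V$, and deduce that if $\lambda(S_V) > 0$ then the relevant stabiliser subgroups of $\mathbb{S}^1$ have positive measure, hence by the \emph{classical} Steinhaus theorem on $\mathbb{S}^1$ and connectedness they are all of $\mathbb{S}^1$. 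This replaces the external Polish-group Steinhaus ingredient with a self-contained argument on a compact group, at the cost of a contrapositive formulation and the need to check measurability of $S_V$ and universal measurability of $C$. One small point worth flagging: you only observe that $C$ is analytic, but the paper's definition of Haar null for non-Borel sets requires a Borel superset that is Haar null; this is a standard technicality (analytic sets are universally measurable and the usual arguments extend), but it deserves a sentence. Also, your ``reverse absolute continuity is symmetric'' gloss is a bit quick — the clean way to close the Fubini argument is to note that $\lambda(A)=0$ implies $\int \mu_V(r^{-1}A)\,d\lambda(r) = \lambda(A)\cdot 1 = 0$, so $\mu_V(r^{-1}A)=0$ for some $r$, and quasi-invariance then gives $\mu_V(A)=0$ — but the conclusion is right, and the paper anyway outsources this step to \cite[Theorem 3.1.5]{DXB}.
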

    
    \begin{proof}
      Suppose that $U$ has a non-Haar null conjugacy class $C^U$. Then by \cite[Corollary 
      5.2]{auto} 
      we have that for a small enough $\varepsilon>0$ for every $q \in \mathbb{S}^1$ if 
      $|q-1|<\varepsilon$ then $qI \in (C^U)^{-1}C^U$. Hence for each 
      $q \in \mathbb{S}^1$ with $|q-1|<\varepsilon$ there exist $U', U'' \in C^U$ with $qU' = U''$. Thus by Corollary 
      \ref{c:rotate} and Theorem \ref{t:ul2conjugate} we have $\mu_{qU} = q_* \mu_{U} \simeq q_*\mu_{U'} = \mu_{qU'} = \mu_{U''} \simeq \mu_{U}$ and similarly $n_{qU}=n_U \circ q^{-1} = n_{U'} \circ q^{-1} = n_{qU'} = n_{U''} = n_U$ holds $\mu_U$-almost everywhere. 
Thus, if $r \in \mathbb{S}^1$ is arbitrary we can choose $k \in \N$ and $q$ with 
      $|q-1|<\varepsilon$ such that $q^k=r$, so pushing forward $\mu_U$ $k$ times by
       multiplication by $q$ we have that $\mu_{rU} = r_*\mu_U \simeq \mu_U$. This means 
      that $\mu_U$ is quasi-invariant probability measure on $\mathbb{S}^1$ therefore 
      $\mu_U \simeq \lambda$ (see e.g. \cite[Theorem 3.1.5]{DXB}). 
      
      Similarly, $n_{rU} = n_U \circ r^{-1}=n_U$ $\mu_U$-almost everywhere. Therefore, $n_U$ is $\mu_U$-almost everywhere constant.
    \end{proof}
    
    Let $k \in \mathbb{N} \cup \{\infty\} \setminus \{0\}$ and let $(e_{i,j})_{i \in k, j \in 
    \mathbb{Z}}$ be an injective enumeration of an orthonormal basis of $\ell^2$ 
    and consider the unitary transformation $U_k$ given by $e_{i,j} \mapsto 
    e_{i,j+1}$. These operators are usually called \emph{multishifts}. It is not hard to 
    see that the spectral measure of such a transformation $\mu_{U_k}$ is 
    equivalent to $\lambda$, while the multiplicity function $n_{U_k}\equiv k$ 
    $\lambda$-almost everywhere. From this and Theorem \ref{t:ul2main} we 
    obtain the following corollary. 
    
    \begin{corollary}\label{c:multishift}
  In the unitary group every conjugacy class is Haar null possibly except for 
  a countable set of classes, namely the conjugacy classes of the multishifts. 
    \end{corollary}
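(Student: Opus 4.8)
The plan is to derive the corollary directly from Theorem \ref{t:ul2main}, the conjugacy classification in Theorem \ref{t:ul2conjugate}, and the explicit computation of the spectral data of the multishifts. First I would note that there are only countably many multishifts, one operator $U_k$ for each $k \in \mathbb{N} \cup \{\infty\} \setminus \{0\}$, hence only countably many conjugacy classes of multishifts. Therefore it suffices to prove that any conjugacy class that is \emph{not} the class of some $U_k$ is Haar null; equivalently, that every non-Haar null conjugacy class coincides with $[U_k]$ for some such $k$.

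So suppose $U \in \mathcal{U}(\ell^2)$ has a non-Haar null conjugacy class. By Theorem \ref{t:ul2main} we get $\mu_U \simeq \lambda$ and $n_U$ equals some constant value $k$ $\lambda$-almost everywhere. I would then check that $k \neq 0$: in the construction of the multiplicity function $n_U = \sum_i \chi_{A_i}$, the set $A_1$ is the support of $\frac{d\mu_{x_1}}{d\mu_{x_1}} \equiv 1$, so $A_1 = \mathbb{S}^1$ up to a $\mu_{x_1}$-null set, whence $n_U \geq 1$ holds $\mu_U$-almost everywhere. Consequently $k \in \mathbb{N} \cup \{\infty\} \setminus \{0\}$, so the multishift $U_k$ exists.

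Finally I would invoke the fact recorded just above the corollary, namely that $\mu_{U_k} \simeq \lambda$ and $n_{U_k} \equiv k$ $\lambda$-almost everywhere. Since $\mu_U \simeq \lambda \simeq \mu_{U_k}$, the equalities $n_U = k = n_{U_k}$ hold $\mu_U$-almost everywhere as well, so Theorem \ref{t:ul2conjugate} yields that $U$ and $U_k$ are conjugate. Thus the conjugacy class of $U$ is the class of a multishift, and combining this with the first paragraph shows that every non-Haar null conjugacy class lies among the countably many classes $[U_k]$, which is exactly the assertion of the corollary. I do not anticipate any genuine obstacle here: the only point requiring a moment's care is the verification that $k \neq 0$, which follows from the normalization implicit in the definition of the multiplicity function, and I would emphasize that the corollary intentionally says ``possibly except,'' since we neither claim nor need that the multishift classes are themselves non-Haar null.
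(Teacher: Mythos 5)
Your proposal is correct and follows essentially the same route as the paper: compute the spectral invariants of the multishifts, then combine Theorem \ref{t:ul2main} with the classification in Theorem \ref{t:ul2conjugate}. The paper leaves the steps you spell out (countability, the check that $k \neq 0$, and the explicit invocation of Theorem \ref{t:ul2conjugate}) implicit in ``From this and Theorem \ref{t:ul2main} we obtain the following corollary,'' so your write-up is simply a more detailed version of the same argument.
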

    
    \begin{question}
      \label{q:multishifts haar null}
      Are all conjugacy classes in $\mathcal{U}(\ell^2)$ Haar null? Equivalently, are all the conjugacy classes of the multishifts in $\mathcal{U}(\ell^2)$ Haar null?
    \end{question}

\section{An application: Haar null-meagre decompositions}

In this section we use our results to answer Question \ref{q:decomposition} in some important special cases. 
The following is an easy corollary of Theorem \ref{t:homeo [0,1] conjugacy classes}. 

\begin{corollary}
  \label{c:homeo [0,1] is Haar null cup meagre}
  The group $\hp([0, 1])$ can be partitioned into a Haar null and a meagre set.
\end{corollary}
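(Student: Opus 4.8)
The plan is to exhibit a set that is simultaneously \emph{meagre} and \emph{co-Haar null}; its complement is then Haar null and the two sets partition $\hp([0,1])$. The natural candidate is
\[
  \mathcal{L} = \{f \in \hp([0,1]) : \fix(f)\text{ has no limit point in }(0,1)\},
\]
which by Claim \ref{c:no limit of fixed points in (0,1)} (part of the proof of Theorem \ref{t:homeo [0,1] conjugacy classes}) is Borel and co-Haar null. (Equivalently one could take the union $\mathcal{N}$ of the non-Haar null conjugacy classes: by Theorem \ref{t:homeo [0,1] conjugacy classes} $\mathcal{N}$ is co-Haar null and $\mathcal{N}\subseteq\mathcal{L}$, so it suffices to handle $\mathcal{L}$.) So everything reduces to the purely topological statement that $\mathcal{L}$ is meagre, i.e.\ that the generic increasing homeomorphism of $[0,1]$ has a limit point of fixed points inside the open interval. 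Note that $\mathcal{L}^{c}$ is then a comeagre Haar null set which is a \emph{union} of conjugacy classes but not a single one; this is exactly the flexibility that Theorem \ref{t:homeo [0,1] conjugacy classes} provides and that Cohen--Kallman's single-conjugacy-class method lacks here.

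To prove $\mathcal{L}$ meagre I would show $\mathcal{L}^{c}=\bigcup_{a<b}A_{a,b}$ is comeagre, where $a,b$ range over the rationals in $(0,1)$ and $A_{a,b}=\{f:\fix(f)\cap[a,b]\text{ is infinite}\}$. First, fixing $a<b$, one finds a nonempty open set in which $A_{a,b}$ is comeagre: pick $f_{0}\in\hp([0,1])$ with a transversal fixed point strictly inside $(a,b)$ in the strong sense that $f_{0}(x)-x<-c$ on $[a,a+\rho]$ and $f_{0}(x)-x>c$ on $[b-\rho,b]$ for some $c,\rho>0$, and let $B=B(f_{0},\delta_{0})$ with $\delta_{0}<c$. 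For every $f\in B$ the displacement $f-\id$ still changes sign inside $[a,b]$, and near any sign change of $f-\id$ the displacement is uniformly small, so one can insert arbitrarily many extra oscillations there by an arbitrarily small perturbation supported near that point. Hence each $\{f\in B:f-\id\text{ has at least }n\text{ sign changes in }[a,b]\}$ is open and dense in $B$, and their intersection, which is contained in $A_{a,b}$, is comeagre in $B$.

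Second, one checks that such a ball sits inside every nonempty open $U\subseteq\hp([0,1])$: given $g\in U$ and $\varepsilon>0$ with $B(g,\varepsilon)\subseteq U$, use $g(x)-x\to 0$ as $x\to0^{+}$ to pick $\alpha>0$ with $|g(x)-x|<\varepsilon/3$ on $(0,\alpha]$, and modify $g$ only on $(0,\alpha]$ to a homeomorphism $f_{0}$ that agrees with $g$ on $[\alpha,1]$ and whose displacement dips below $-\varepsilon/3$, crosses zero transversally, and rises above $\varepsilon/3$ inside $(0,\alpha)$; then $f_{0}\in B(g,\varepsilon)\subseteq U$ is of the type above for a suitable rational interval $[a,b]\subset(0,\alpha)$. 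Thus the union $D$ of all open sets in which some $A_{a,b}$ is comeagre is dense and open; choosing a countable subfamily of such balls covering $D$, the set $D\setminus\mathcal{L}^{c}$ lies in a countable union of sets meagre in the respective balls, hence is meagre, while $\hp([0,1])\setminus D$ is nowhere dense. Therefore $\mathcal{L}^{c}$ is comeagre, $\mathcal{L}$ is meagre, and $\hp([0,1])=\mathcal{L}\sqcup\mathcal{L}^{c}$ is the required partition.

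The only non-routine step is the density assertion of the second paragraph, which rests on two elementary but slightly fiddly perturbation constructions: replacing a homeomorphism near a transversal fixed point by one with many new sign changes of $f-\id$ while moving $f$ as little as we like in the uniform metric, and creating a transversal fixed point near $0$ inside a prescribed $\varepsilon$-ball. Once these are in place the remainder is bookkeeping with the Banach-category machinery already used in the paper.
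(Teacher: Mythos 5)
Your proof is correct in its overall structure and arrives at a valid partition, but it takes a somewhat different route from the paper's. Both proofs partition $\hp([0,1])$ into a ``few fixed points'' set (which is shown to be meagre) and a ``many fixed points'' set (which is shown to be Haar null by the structure theorem). The paper uses $\mathcal{N}=\{f:|\fix(f)|>\aleph_0\}$ and $\mathcal{M}=\{f:|\fix(f)|\le\aleph_0\}$; you use $\mathcal{L}^c$ and $\mathcal{L}$ (with $\mathcal{L}\subsetneq\mathcal{M}$ and $\mathcal{N}\subsetneq\mathcal{L}^c$). The decisive difference is on the meagreness side: the paper simply cites the well-known fact that for the Baire-generic $f\in\hp([0,1])$ the set $\fix(f)$ is a Cantor set (so $\mathcal{M}$ is meagre), whereas you prove the weaker statement --- that generically $\fix(f)$ has a limit point in $(0,1)$ --- from scratch by a localization-plus-perturbation Baire argument. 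Your version is self-contained and makes explicit why the Cohen--Kallman single-conjugacy-class technique is unnecessary here (the Haar null side is a \emph{union} of classes), at the cost of having to verify two perturbation constructions that the paper avoids by citation.

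Two small points of care, neither fatal. First, the constant in your second construction should be made adaptive: you cannot generally arrange the displacement of $f_0$ to dip below $-\varepsilon/3$ inside $(0,\alpha)$, since $f_0(x)>0$ forces $f_0(x)-x>-x>-\alpha$, and $\alpha$ may well be smaller than $\varepsilon/3$. The fix is to dip only to some $-c$ with $c<\min(\varepsilon/6,\alpha/2)$, locate the dip at abscissae $>c$, and then take $\delta_0<c$; the rest of the argument goes through verbatim. Second, in the density argument one must confirm that the inserted oscillations keep $f'$ strictly increasing, which is possible because each down-swing of $f'-\id$ of amplitude $\eta$ can be spread over a horizontal distance $>\eta$; since the amplitude can be taken arbitrarily small (the displacement is small near a zero of $f-\id$), arbitrarily many sign changes fit in any neighbourhood of that zero while keeping the perturbation below any prescribed bound. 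With these clarifications your argument is a correct, if more laborious, alternative to the paper's appeal to the Cantor-set genericity result.
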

\begin{proof}
  Let $\mathcal{N} = \{ f \in \hp([0, 1]) : |\fix(f)| > \aleph_0\}$ and 
  let $\mathcal{M} = \{ f \in \hp([0, 1]) : |\fix(f)| \le \aleph_0\}$. Clearly 
  $\mathcal{M} \cup \mathcal{N}$ is a partition of $\hp([0, 1])$, and using 
  Theorem \ref{t:homeo [0,1] conjugacy classes}, $\mathcal{N}$ is Haar 
  null, while using the well-known fact that the generic homeomorphism has a Cantor set 
  of fixed points (see e.g. \cite{GW} for an even stronger statement), $\mathcal{M}$ is meagre.
\end{proof}

Now we use Theorem \ref{t:haar positive conj classes in homeo s1} to show the following.

\begin{corollary}
  \label{c:homeo S^1 is Haar null cup meagre}
  The group $\hp(\mathbb{S}^1)$ can be partitioned into a Haar null and a meagre set.
\end{corollary}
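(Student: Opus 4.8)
The plan is to partition $\hp(\mathbb{S}^1)$ just as $\hp([0,1])$ was partitioned in Corollary \ref{c:homeo [0,1] is Haar null cup meagre}, with \emph{periodic} points in the role of fixed points. Set
\[
\mathcal{N}=\{f\in\hp(\mathbb{S}^1):f\text{ has infinitely many periodic points}\},\qquad \mathcal{M}=\hp(\mathbb{S}^1)\setminus\mathcal{N},
\]
so that $\mathcal{M}$ consists of the homeomorphisms with only finitely many (possibly zero) periodic points and $\hp(\mathbb{S}^1)=\mathcal{N}\sqcup\mathcal{M}$. The set $\mathcal{N}$ is Haar null: it is exactly the first of the two sets proved Haar null in Proposition \ref{p:co-Haar null hom(s1) has finitely many crossing per points}. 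So the entire task is to show that $\mathcal{M}$ is meagre, equivalently that $\mathcal{N}$ is comeagre; this is the circle analogue of the classical fact, used in the interval case, that the generic homeomorphism has a Cantor set of fixed points.

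For a rational $p/q$ in lowest terms put $U_{p/q}=\interior\big(\tau^{-1}(p/q)\big)$; this is open, $\tau\equiv p/q$ on it, and the $U_{p/q}$ are pairwise disjoint. Their union $U=\bigcup_{p/q}U_{p/q}$ is dense: given $h\in\hp(\mathbb{S}^1)$ and $\varepsilon>0$, either $h$ already has a periodic point, or $h$ has none and then, $\mathbb{S}^1$ being compact, some orbit of $h$ contains two points arbitrarily close, so there are $q\ge1$ and $y_0$ with $|h^{q}(y_0)-y_0|$ as small as we wish; in either case a small local surgery near $h^{q-1}(y_0)$, leaving $y_0,h(y_0),\dots,h^{q-2}(y_0)$ in place, yields $\tilde h$ with $\|\tilde h-h\|<\varepsilon/2$ for which $y_0$ is a genuine periodic point of period $q$, and a second small surgery on a short arc $A$ near (but not containing) $y_0$ with $A,\tilde h(A),\dots,\tilde h^{q-1}(A)$ pairwise disjoint and $A\cap\tilde h^{q}(A)\ne\emptyset$ produces $g$ with $\|g-h\|<\varepsilon$ having a crossing periodic orbit while $y_0$ stays periodic. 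Such a $g$ lies in $U$, since a crossing periodic orbit partially persists — nearby maps still have a periodic point of period dividing $q$ near each original orbit point — which pins their rotation number to $p/q$ by continuity of $\tau$ and the $1/q^2$ spacing of rationals with denominator at most $q$.

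By disjointness of the $U_{p/q}$, density of $U$, and the Baire category theorem it is enough to prove that $\mathcal{N}$ is comeagre in each $U_{p/q}$. Fix $p/q$. For pairwise disjoint open arcs $J_1,\dots,J_n\subseteq\mathbb{S}^1$ with rational endpoints let $O_{\vec J}$ be the set of $f\in U_{p/q}$ such that $f^{q}-\id$ — read off from a lift, as in Section \ref{s:homeo S^1} — takes a strictly positive and a strictly negative value inside each $J_i$; then $O_{\vec J}$ is open and $\bigcup_{\vec J}O_{\vec J}\subseteq\{f\in U_{p/q}:|\fix(f^{q})|\ge n\}$. The crux is that $\bigcup_{\vec J}O_{\vec J}$ is dense in $U_{p/q}$: any $f\in U_{p/q}$ has a periodic orbit $O$, and the same two surgeries as above, carried out on a short arc $A$ near a point $z_0\in O$ with $A,f(A),\dots,f^{q-1}(A)$ pairwise disjoint, disjoint from $O$, and $A\cap f^{q}(A)\ne\emptyset$, can be arranged to give $g\in U_{p/q}$ for which $g^{q}-\id$ has as many sign changes inside $A$ as desired while $O$, and hence $\tau=p/q$, is untouched; shrinking $A$ about the new zeros supplies the arcs $J_i$. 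Consequently $\bigcap_n\bigcup_{\vec J}O_{\vec J}$ is a dense $G_\delta$ subset of $U_{p/q}$ consisting of homeomorphisms with infinitely many periodic points, so it is contained in $\mathcal{N}$. Hence $\mathcal{N}$ is comeagre in $U_{p/q}$, therefore comeagre in $\hp(\mathbb{S}^1)$; $\mathcal{M}$ is meagre, and $\hp(\mathbb{S}^1)=\mathcal{N}\sqcup\mathcal{M}$ is the desired partition.

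The main obstacle is the two surgery lemmas — producing a genuine periodic orbit from an almost-periodic point, and then perturbing within $\tau^{-1}(p/q)$ so that $f^{q}-\id$ acquires prescribed sign changes while one periodic orbit is preserved. Both are elementary one-dimensional constructions; the only delicate point is the combinatorial bookkeeping that keeps the surgeries from disturbing the orbit one wants to retain, together with the condition $A\cap f^{q}(A)\ne\emptyset$, which is precisely what makes room for the new crossings. Alternatively one may invoke the classical facts that phase-locked circle homeomorphisms are dense and that a generic circle homeomorphism has a Cantor set of periodic points, cf.\ \cite{KH}.
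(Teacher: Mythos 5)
Your decomposition is a slight variant of the paper's, and the argument follows essentially the same strategy. You partition by infinitely versus finitely many periodic points, with the Haar null half coming directly from Proposition \ref{p:co-Haar null hom(s1) has finitely many crossing per points}, whereas the paper uses uncountably versus at most countably many and invokes Theorem \ref{t:haar positive conj classes in homeo s1}; both work and reduce to the same proposition. The real divergence is in the meagre half: the paper disposes of it in one line by citing \cite{CHPZ} for the genericity of a Cantor set of periodic points, while you attempt a self-contained Baire-category argument — localize to the dense open set $U=\bigcup_{p/q}\interior\bigl(\tau^{-1}(p/q)\bigr)$ and, inside each $U_{p/q}$, show that having at least $n$ fixed points of $f^q$ is densely open. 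The reductions here (density of $U$, disjointness of the $U_{p/q}$, the $G_\delta$ bookkeeping) are sound, but the two surgery lemmas you rely on — making an almost-periodic point truly periodic, then adding sign changes to $f^q-\id$ while staying locked at $\tau=p/q$ — though elementary, are genuinely fiddly and only sketched; in particular one must check that the modification region near $h^{q-1}(y_0)$ can be shrunk to avoid the rest of the orbit segment while keeping the perturbation below $\varepsilon$, and that after the first surgery the second one preserves a crossing orbit so that the rotation number stays pinned at $p/q$. You acknowledge this and note that one can instead cite the classical density-of-phase-locking and generic-Cantor-set facts, which is essentially what the paper does (via \cite{CHPZ} rather than \cite{KH}); with that fallback, the argument is complete. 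Your route makes the mechanism behind the citation visible, at the cost of some unproven surgery details; the paper's is more economical.
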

\begin{proof}
  Similarly to the proof of the previous corollary, let 
  $$\mathcal{N} = \{ f \in \hp(\mathcal{S}^1) : \text{ the number of periodic points of $f$ is 
  uncountable}\}$$ and let $$\mathcal{M} = \{ f \in \hp(\mathcal{S}^1) : \text{ the number of 
  periodic points of $f$ is countable}\}.$$ Clearly 
  $\mathcal{M} \cup \mathcal{N}$ is a partition of $\hp([0, 1])$, and using 
  Theorem \ref{t:haar positive conj classes in homeo s1}, $\mathcal{N}$ is Haar 
  null, while using the fact that for the generic $f \in \hp(\mathcal{S}^1)$ 
  the set of periodic points of $f$ is homeomorphic to the Cantor set 
  (see \cite[Theorem 3]{CHPZ}), $\mathcal{M}$ is meagre.
\end{proof}

The following proposition is an easy consequence of a result of Dougherty \cite{D}.

\begin{proposition}
  \label{p:factor -> original decomposition} 
  Let $G$ and $H$ be Polish groups and suppose that there exists a continuous, surjective homomorphism $\phi : G \to H$. If $H$ can be partitioned into a Haar null and a meagre set then $G$ can be partitioned in such a way as well. 
\end{proposition}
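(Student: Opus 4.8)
The plan is to pull the decomposition of $H$ back along $\phi$. Assume $H = A \sqcup B$ with $A$ Haar null and $B$ meagre, and put $A' = \phi^{-1}(A)$ and $B' = G \setminus A'$. Since $H \setminus A \subseteq B$ we have $B' \subseteq \phi^{-1}(H \setminus A) \subseteq \phi^{-1}(B)$, so $G = A' \sqcup B'$ and it suffices to check that $A'$ is Haar null and that $B'$ is meagre.

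For the meagre part, recall that a continuous surjective homomorphism between Polish groups is open (the open mapping theorem for Polish groups), so $\phi$ is open. If $N \subseteq H$ is nowhere dense, then $\phi^{-1}(\overline{N})$ is closed, and it has empty interior: a nonempty open subset $U$ of it would yield a nonempty open set $\phi(U) \subseteq \overline{N}$, contradicting that $N$ is nowhere dense. Hence $\phi^{-1}(N)$ is nowhere dense, and since $\phi^{-1}$ commutes with countable unions, the $\phi$-preimage of every meagre set is meagre; in particular $B'$ is meagre.

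For the Haar null part, the key fact is that the $\phi$-preimage of a Haar null subset of $H$ is Haar null in $G$ — this is exactly the result of Dougherty \cite{D} invoked in the statement. In outline, one uses that $\phi$ admits a (universally) measurable section $s : H \to G$ with $\phi \circ s = \id_H$, coming from a measurable transversal for the cosets of the closed subgroup $\ker \phi$, and checks that if $\mu$ is a Borel probability measure on $H$ witnessing that a Borel Haar null set $\widehat{A} \supseteq A$ is Haar null, then its pushforward $s_*\mu$ witnesses that $\phi^{-1}(\widehat{A})$ is Haar null in $G$: using that $\phi$ is a homomorphism, $g\,\phi^{-1}(\widehat{A})\,h = \phi^{-1}\bigl(\phi(g)\,\widehat{A}\,\phi(h)\bigr)$ for all $g, h \in G$, and $s^{-1}(\phi^{-1}(C)) = C$ for every $C \subseteq H$, so that $(s_*\mu)\bigl(g\,\phi^{-1}(\widehat{A})\,h\bigr) = \mu\bigl(\phi(g)\,\widehat{A}\,\phi(h)\bigr) = 0$. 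Then $A' = \phi^{-1}(A) \subseteq \phi^{-1}(\widehat{A})$ is Haar null, which finishes the argument.

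The only real content is the Haar null direction: in contrast to the meagre part, which uses nothing beyond openness of $\phi$, transporting a witness measure from $H$ to $G$ requires a measurable section of $\phi$, so the main point is (the proof of) Dougherty's theorem; the remaining steps are routine set-theoretic bookkeeping.
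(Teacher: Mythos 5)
Your proof is correct and takes essentially the same route as the paper: you invoke Dougherty's preimage theorem for the Haar null piece and use openness of $\phi$ (the open mapping theorem for Polish groups) to show that preimages of nowhere dense sets are nowhere dense, hence preimages of meagre sets are meagre. The additional sketch of Dougherty's result via a universally measurable section is not in the paper (which cites it as a black box) but is a sound outline of the underlying argument.
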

\begin{proof}
  Let $H = N \cup M$, where $N$ is Haar null and $M$ is meagre. By \cite[Proposition 8]{D}, the inverse image of a Haar null set under a continuous, surjective homomorphism is also Haar null, hence $\phi^{-1}(N)$ is Haar null in $G$. To see that $\phi^{-1}(M)$ is meagre, note that $M \subset \bigcup_{n \in \N} S_n$ where each $S_n$ is closed, nowhere dense. Hence, for each $n$, $\phi^{-1}(S_n)$ is closed using that $\phi$ is continuous. Since $\phi$ is a continuous, surjective homomorphism, $\phi$ is open by \cite[Theorem 2.3.3]{Gaobook}. Therefore, $\phi^{-1}(S_n)$ is nowhere dense in $G$ for each $n$. Thus $G = \phi^{-1}(N) \cup \phi^{-1}(M)$ is a Haar null-meagre decomposition of $G$. 
\end{proof}
\begin{corollary}
  Let $G$ be a Polish group and $H \le G$ a closed normal subgroup. Then $G / H$ is a Polish group, and if $G / H$ can be partitioned into a Haar null and a meagre set then $G$ can be partitioned in such a way as well.  
\end{corollary}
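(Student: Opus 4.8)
The plan is to obtain this as an immediate consequence of Proposition \ref{p:factor -> original decomposition}. Indeed, once we know that $G/H$ (with the quotient topology) is a Polish group and that the canonical projection $\phi\colon G\to G/H$ is a continuous surjective group homomorphism, we may apply Proposition \ref{p:factor -> original decomposition} with the group ``$H$'' of that proposition taken to be $G/H$ and the homomorphism taken to be $\phi$: a partition of $G/H$ into a Haar null set and a meagre set then pulls back, via $\phi^{-1}$, to such a partition of $G$.

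So the first thing I would do is record the standard facts about the quotient. Since $H$ is normal, $G/H$ carries a natural group structure, and with the quotient topology the multiplication and inversion maps are continuous, so $G/H$ is a topological group; the projection $\phi$ is continuous by the definition of the quotient topology, surjective, a homomorphism because $H$ is normal, and open (a general feature of quotient maps of topological groups). As a continuous image of the separable space $G$, the group $G/H$ is separable; since $H$ is closed, singletons in $G/H$ are closed, so $G/H$ is Hausdorff; and since $\phi$ is open and continuous from the first countable space $G$, the quotient $G/H$ is first countable, hence metrizable by Birkhoff--Kakutani.

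The only point that is not a routine check is that $G/H$ is \emph{completely} metrizable, i.e.\ actually Polish; this is the well-known fact that the quotient of a Polish group by a closed normal subgroup is again a Polish group (see e.g.~\cite{Gaobook}). I do not anticipate any real obstacle here beyond citing this result, as everything else is elementary. Once $G/H$ is known to be a Polish group and $\phi\colon G\to G/H$ a continuous surjective homomorphism, Proposition \ref{p:factor -> original decomposition} applies verbatim and yields the desired Haar null--meagre partition of $G$, completing the proof.
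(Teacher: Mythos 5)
Your proposal is correct and follows essentially the same route as the paper: both reduce the statement to the fact that $G/H$ is a Polish group and the quotient map is a continuous open surjective homomorphism, cite a standard reference for that fact, and then invoke Proposition \ref{p:factor -> original decomposition}. You spell out a few routine intermediate checks (separability, Hausdorffness, first countability) that the paper leaves implicit, but the key nontrivial step (complete metrizability of the quotient) is, in both cases, handled by citation, so the two arguments coincide in substance.
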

\begin{proof}
  For the well-known facts that $G / H$ is a Polish group and the canonical projection from $G$ to $G / H$ satisfies the conditions of Proposition \ref{p:factor -> original decomposition} see e.g.~Proposition 1.2.3 of \cite{becker1996descriptive} and the preceding remarks. Then an application of Proposition \ref{p:factor -> original decomposition} completes the proof. 
\end{proof}

We use Proposition \ref{p:factor -> original decomposition} to obtain a corollary of Corollary \ref{c:homeo S^1 is Haar null cup meagre}. 
\begin{corollary}
  The group $\hp(\mathbb{D}^2)$ of orientation preserving homeomorphisms of the disc $\mathbb{D}^2$ can be partitioned into a Haar null and a meagre set. 
\end{corollary}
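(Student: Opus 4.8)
The plan is to realise $\hp(\mathbb{S}^1)$ as a continuous homomorphic image of $\hp(\mathbb{D}^2)$ and then invoke Proposition \ref{p:factor -> original decomposition} together with Corollary \ref{c:homeo S^1 is Haar null cup meagre}. Recall first that $\hp(\mathbb{D}^2)$, equipped with the uniform topology, is a Polish group: it is a clopen subgroup of the homeomorphism group of the compact metric space $\mathbb{D}^2$. Now take $\mathbb{S}^1 = \partial\mathbb{D}^2$ and define $\phi : \hp(\mathbb{D}^2) \to \hp(\mathbb{S}^1)$ by $\phi(f) = f|_{\mathbb{S}^1}$. The first task is to check that $\phi$ is well defined: every homeomorphism of $\mathbb{D}^2$ carries $\partial\mathbb{D}^2$ onto itself (by invariance of domain the boundary is exactly the set of points having no neighbourhood homeomorphic to $\R^2$), and an orientation-preserving homeomorphism of the disc restricts to an orientation-preserving homeomorphism of the circle. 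It is immediate that $\phi$ is a group homomorphism, and it is continuous — in fact contractive — since $\sup_{\mathbb{S}^1}|f - g| \le \sup_{\mathbb{D}^2}|f - g|$, and the analogous inequality holds for the inverses, so $\phi$ is $1$-Lipschitz for the natural complete metrics on the two groups.

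The heart of the argument is the surjectivity of $\phi$, which I would establish via the Alexander (coning) trick: given $g \in \hp(\mathbb{S}^1)$, define $\tilde g : \mathbb{D}^2 \to \mathbb{D}^2$ by $\tilde g(r\omega) = r\, g(\omega)$ for $\omega \in \mathbb{S}^1 \subset \R^2$ and $r \in [0, 1]$, with $\tilde g(0) = 0$. A routine verification shows that $\tilde g$ is a homeomorphism of $\mathbb{D}^2$ fixing the origin with $\tilde g|_{\mathbb{S}^1} = g$. To see that $\tilde g$ is orientation preserving, note that $\hp(\mathbb{S}^1)$ is path connected (it deformation retracts onto the rotation subgroup) and that the assignment $g \mapsto \tilde g$ is continuous into $\homeo(\mathbb{D}^2)$, so $\tilde g$ lies in the same path component as $\id_{\mathbb{D}^2}$; hence $\tilde g \in \hp(\mathbb{D}^2)$ and $\phi(\tilde g) = g$.

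Finally, by Corollary \ref{c:homeo S^1 is Haar null cup meagre} the group $\hp(\mathbb{S}^1)$ can be partitioned into a Haar null set $N$ and a meagre set $M$. Applying Proposition \ref{p:factor -> original decomposition} to the continuous surjective homomorphism $\phi$ yields the desired partition of $\hp(\mathbb{D}^2)$; concretely it is given by $\phi^{-1}(N) \cup \phi^{-1}(M)$, where $\phi^{-1}(N)$ is Haar null and $\phi^{-1}(M)$ is meagre, and this is a genuine partition since $N$ and $M$ partition $\hp(\mathbb{S}^1)$. The only real obstacle is the bookkeeping in the surjectivity step — checking that the coned map is a homeomorphism and that it preserves orientation — everything else being a direct application of the machinery already assembled in this section. (One could equally work with $\homeo(\mathbb{D}^2)$ mapping onto $\homeo(\mathbb{S}^1)$, but restricting to the orientation-preserving subgroups keeps the path-connectedness argument clean.)
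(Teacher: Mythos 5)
Your proof takes essentially the same route as the paper: both define the boundary-restriction homomorphism $\phi : \hp(\mathbb{D}^2) \to \hp(\mathbb{S}^1)$, verify it is continuous and surjective (you via the Alexander coning trick, which the paper leaves implicit), and invoke Proposition \ref{p:factor -> original decomposition} together with Corollary \ref{c:homeo S^1 is Haar null cup meagre}. The argument is correct and merely fills in details the paper treats as routine.
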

\begin{proof}
  It is not hard to see that the restriction of every homeomorphism $f \in \hp(\mathbb{D}^2)$ to the boundary of $\mathbb{D}^2$ is an element of $\hp(\mathbb{S}^1)$. Then it is easy to see that this map is a continuous, open and surjective homomorphism, hence the proof is complete using Proposition \ref{p:factor -> original decomposition}.
\end{proof}

The following questions, however, remain open.

\begin{question}
  \label{q:D^n, S^n decomp}
  Is it true that $\hp(\mathbb{D}^n)$ (for $n \ge 3$) and $\hp(\mathbb{S}^n)$ (for $n \ge 2$) can be partitioned into a Haar null and a meagre set?
\end{question}

\begin{question}
  \label{q:[0,1]^N decomp}
  Is it true that the homeomorphism group of the Hilbert cube $[0, 1]^\N$ can be partitioned into a Haar null and a meagre set?
\end{question}

\section{Open problems}

In this section we collect the open problems of the paper. First of all, it would be interesting to obtain general results about homeomorphism groups.

\begin{question}
  What can we say about the size of conjugacy classes of the group of homeomorphisms of a compact metric space?
\end{question}

The following special case is particularly important.

\begin{question}
Which conjugacy classes are non-Haar null in the group of homeomorphisms of the Cantor set?
\end{question}

Concerning the groups we investigated in this paper, the following two problems remain open. 

\begin{namedthm*}{Question \ref{q:S1 irrational rot number}}
  Is the set $\{f \in \hp(\mathbb{S}^1) : \tau(f) \not \in \Q\}$ Haar null?
\end{namedthm*}
\begin{namedthm*}{Question \ref{q:multishifts haar null}}
  Are all conjugacy classes in $\mathcal{U}(\ell^2)$ Haar null? Equivalently, are all the conjugacy classes of the multishifts in $\mathcal{U}(\ell^2)$ Haar null?
\end{namedthm*}

As an application of our results, we answered special cases of the following question that remains open in general. 

\begin{namedthm*}{Question \ref{q:decomposition}}
  Suppose that $G$ is an uncountable Polish group. Can it be written as the 
  union of a meagre and a Haar null set?
\end{namedthm*}

In particular, we are interested in the following special cases.

\begin{namedthm*}{Question \ref{q:D^n, S^n decomp}}
  Is it true that $\hp(\mathbb{D}^n)$ (for $n \ge 3$) and $\hp(\mathbb{S}^n)$ (for $n \ge 2$) can be partitioned into a Haar null and a meagre set?
\end{namedthm*}
\begin{namedthm*}{Question \ref{q:[0,1]^N decomp}}
  Is it true that the homeomorphism group of the Hilbert cube $[0, 1]^\N$ can be partitioned into a Haar null and a meagre set?
\end{namedthm*}

\textbf{Acknowledgements.} We would like to thank to R. Balka, Z. Gyenis, A. Kechris, C. Rosendal, S. Solecki and P. Wesolek for many valuable remarks and discussions.

\bibliographystyle{abbrv}
\bibliography{ran}

\end{document}